\newtheorem{Theorem}[equation]{Theorem}
\newtheorem{Lemma}[equation]{Lemma}
\newtheorem{Proposition}[equation]{Proposition}
\theoremstyle{definition}
\newtheorem{Definition}[equation]{Definition}
\theoremstyle{remark}
\newtheorem{Remark}[equation]{Remark}
\numberwithin{equation}{section}
\newtheorem{Claim}[equation]{Claim}
\DeclareMathOperator{\ev}{ev}
\DeclareMathOperator{\id}{id}
\DeclareMathOperator{\tr}{tr}
\DeclareMathOperator{\row}{row}
\DeclareMathOperator{\col}{col}
\newcommand{\ve}{\varepsilon}
\begin{document}
\title{Twisted Affine Yangian and Rectangular $W$-algebra of type $D$}
\author{Mamoru Ueda}
\date{}
\maketitle
\begin{abstract}
We define the twisted affine Yangian of type $C$ and construct surjective homomorphisms from twisted affine Yangians of type $C$ to the universal enveloping algebra of the rectangular $W$-algebra associated with $\mathfrak{so}(ln)$ and a nilpotent element whose Jordan form corresponds to the partition $(l^n)$ in the case when $l$ and $n$ are even.\footnote[0]{{\bf Key words;} quantum group, affine Yangian, vertex algebra, $W$-algebra\\{\bf 2020 Math. Subject
Classification;} 17B37\\
\qquad{\bf Institute;} Research Institute for Mathematical Sciences, Kyoto, JAPAN\\
\qquad{\bf email adress;} udmaoru@kurims.kyoto-u.ac.jp}
\end{abstract}
\section{Introduction}

In this article, we extend our previous work \cite{U4} to type $D$ setting. 

Drinfeld (\cite{D1}, \cite{D2}) introduced the finite Yangian in order to solve the Yang-Baxter equation. 
The finite Yangian $Y_h(\mathfrak{g})$ is a quantum group associated with a finite dimensional simple Lie algebra $\mathfrak{g}$ and one parameter $h\in\mathbb{C}$ and is a deformation of a current algebra $\mathfrak{g}\otimes\mathbb{C}[z]$. The relationships between finite Yangians of type $A$ and finite $W$-algebras (\cite{Pr}) of type $A$ has been studied (see \cite{RS}, \cite{BK}). A finite $W$-algebra $\mathcal{W}^{\text{fin}}(\mathfrak{g},f)$ is an associative algebra associated with a finite dimensional reductive Lie algebra $\mathfrak{g}$ and a nilpotent element $f\in\mathfrak{g}$. Ragoucy and Sorba \cite{RS} gave surjective homomorphisms from finite Yangians of type $A$ to finite rectangular $W$-algebras of type $A$. More generally, in \cite{BK}, Brundan and Kleshchev constructed a surjective homomorphism from a shifted Yangian, a subalgebra of the finite Yangian of type $A$, to an arbitrary finite $W$-algebra of type $A$.  
For type $CD$ cases, Brown \cite{Bro} constructed surjective homomorphisms from twisted Yangians to rectangular finite $W$-algebras of type $CD$ by using twisted Yangians instead of Yangians. Twisted Yangians were introduced by Olshanskii (\cite{O}) and were further studied in \cite{GR, M1, MNO} etc. The twisted Yangian $T_h(\mathfrak{g},\mathfrak{k})$ is an associative algebra associated with one parameter $h$, a finite dimensional simple Lie algebra $\mathfrak{g}$, subspaces $\mathfrak{k},\mathfrak{m}\subset\mathfrak{g}$, and a symmetric involution $\theta\colon\mathfrak{g}\to\mathfrak{g}$ such that $\mathfrak{g}^\theta=\mathfrak{k}$ and $\mathfrak{m}=\{x\in\mathfrak{g}\mid\theta(x)=-x\}$. The twisted Yangian $T_h(\mathfrak{g},\mathfrak{k})$ can be realized as a coideal of the finite Yangian $Y_h(\mathfrak{g})$. 

A finite $W$-algebra $\mathcal{W}^{\text{fin}}(\mathfrak{g},f)$ can be regarded as a finite analogue of a $W$-algebra $\mathcal{W}^k(\mathfrak{g},f)$ (\cite{DSK1}, \cite{A1}). A $W$-algebra $\mathcal{W}^k(\mathfrak{g},f)$ is a vertex algebra associted with a finite dimensional reductive Lie algebra $\mathfrak{g}$ and a nilpotent element $f\in\mathfrak{g}$. In the case when $\mathfrak{g}$ is ${\mathfrak{sl}}(n)$, there exists a similar result to that of Ragoucy-Sorba \cite{RS} in the affine setting. The corresponding Yangian is the two parameter's Yangian $Y_{\ve_1,\ve_2}(\widehat{\mathfrak{sl}}(n))$, which was defined by Guay (\cite{Gu1}, \cite{Gu2}). We call $Y_{\ve_1,\ve_2}(\widehat{\mathfrak{sl}}(n))$ Guay's affine Yangian.  It has been shown that Guay's affine Yangian is a deformation of the central extension of the current algebra ${\mathfrak{sl}}(n)[u^{\pm1},v]$ and has a Hopf algebra structure (\cite{Gu1}, \cite{GNW}). Precisely speaking, there exists an algebra homomorphism satisfying the coassociativity;
\begin{equation*}
\widetilde{\Delta}\colon \widetilde{Y}_{\ve_1,\ve_2}(\widehat{\mathfrak{sl}}(n))\to Y_{\ve_1,\ve_2}(\widehat{\mathfrak{sl}}(n))\widehat{\otimes}Y_{\ve_1,\ve_2}(\widehat{\mathfrak{sl}}(n)),
\end{equation*}
where $\widetilde{Y}_{\ve_1,\ve_2}(\widehat{\mathfrak{sl}}(n))$ and $Y_{\ve_1,\ve_2}(\widehat{\mathfrak{sl}}(n))\widehat{\otimes}Y_{\ve_1,\ve_2}(\widehat{\mathfrak{sl}}(n))$ are the standard degreewise completions of $Y_{\ve_1,\ve_2}(\widehat{\mathfrak{sl}}(n))$ and $Y_{\ve_1,\ve_2}(\widehat{\mathfrak{sl}}(n))\otimes Y_{\ve_1,\ve_2}(\widehat{\mathfrak{sl}}(n))$.
The affine Yangian associated with $\widehat{\mathfrak{gl}}(1)$ was defined by using a geometric realization of the Guay's affine Yangian (\cite{N1}, \cite{V}, \cite{SV}).
In the case that $f$ is the principal nilpotent element, Schiffmann and Vasserot (\cite{SV}) have constructed a surjective homomorphism from the Yangian of $\widehat{\mathfrak{gl}}(1)$ to the universal enveloping algebras (see \cite{FZ} and \cite{MNT}) of the principal $W$-algebras of type $A$ and have proved the celebrated AGT conjecture (\cite{Ga}, \cite{BFFR}). In our previous work \cite{U4}, we gave a homomorphism from Guay's affne Yangian $Y_{\ve_1,\ve_2}(\widehat{\mathfrak{sl}}(n))$ to the universal enveloping algebra of $\mathcal{W}^{k}(\mathfrak{gl}(nl),(l^n))$, the rectangular $W$-algebra associated with $\mathfrak{g}=\mathfrak{gl}(nl)$ and its nilpotent element $f$ whose Jordan form corresponds to the partition $(l^n)$. In \cite{U4}, we also show the corresponding statement in the super setting. That is, we have shown that there exists a surjective homomorphism from the affine super Yangian $Y_{\ve_1,\ve_2}(\widehat{\mathfrak{sl}}(m|n))$ defined in \cite{U2} to the universal enveloping algebra of rectangualar $W$-superalgebra $\mathcal{W}^{k}(\mathfrak{gl}(ml|nl),(l^{(m|n)}))$. Moreover, in \cite{KU}, an another proof to the main result of \cite{U4} was given. Precisely speaking, we construct these homomorphisms by using the coproduct and the evaluation map for the Guay's affine Yangian.

In this paper, we deal with the rectangular $W$-algebra $\mathcal{W}^k(\mathfrak{so}(nl),(l^{n}))$, the $W$-algebra associated with $\mathfrak{g}=\mathfrak{so}(nl)$ and a nilpotent element $f$ whose Jordan form corresponds to the partition $(l^n)$ in the case when $l$ and $n$ are even. The corresponding Yangian is the twisted affine Yangian $TY_{\ve_1,\ve_2}(\widehat{\mathfrak{sp}}(n))$ which is defined by using the Drinfeld $J$ presentation of the Guay's affine Yangian in the sense of \cite{GNW}. The Drinfeld $J$ presentation of the finite Yangian is Drinfeld's original definition of the finite Yangian (\cite{D1}) whose generators are $\{x,J(x)\mid x\in\mathfrak{g}\}$, where $J(x)$ is corresponding to $x\otimes z\in\mathfrak{g}\otimes\mathbb{C}[z]$. Reffering to the Drinfeld $J$ presentation of $Y_h(\mathfrak{g})$, Belliard and Regelskis (\cite{BR1}) constructed the Drinfeld $J$ presentation of the twisted Yangian whose generators are $\{x,B(y)\mid x\in\mathfrak{k},y\in\mathfrak{m}\}$, where $B(y)$ is corresponding to $y\otimes z\in\mathfrak{m}\otimes\mathbb{C}[z]$ when we set $h=0$. In \cite{GNW}, Guay-Nakajima-Wendland constructed the terms $J(h_i),J(x^\pm_i)\in\widetilde{Y}_{\ve_1,\ve_2}(\widehat{\mathfrak{sl}}(n))$ in the analogy of the Drinfeld $J$ presentation of the finite Yangians. We define $TY_{\ve_1,\ve_2}(\widehat{\mathfrak{sp}}(n))$ as a subalgebra of $\widetilde{Y}_{\ve_1,\ve_2}(\widehat{\mathfrak{sl}}(n))$ in terms of $J(h_i)$. We note that $TY_{\ve_1,\ve_2}(\widehat{\mathfrak{sp}}(n))$ becomes a coideal of $\widetilde{Y}_{\ve_1,\ve_2}(\widehat{\mathfrak{sl}}(n))$.

We construct a surjective homomorphism from the twisted affine Yangian $TY_{\ve_1,\ve_2}(\widehat{\mathfrak{sp}}(n))$ to the universal enveloping algebra of $\mathcal{W}^k(\mathfrak{so}(nl),(l^{n}))$ in the case when $l$ and $n$ are even. 
\begin{Theorem}\label{t1}
Let $n\geq4$ and $l$ be positive even. For any $k\in\mathbb{C}$, we set
\begin{align*}
\ve_1=-\dfrac{(k+(l-1)n-2)\hbar}{n},\quad\ve_2=\hbar+\dfrac{(k+(l-1)n-2)\hbar}{n}.
\end{align*}
There exists an algebra homomorphism
\begin{equation*}
\Phi\colon TY_{\ve_1,\ve_2}(\widehat{\mathfrak{sp}}(n))\to\mathcal{U}(\mathcal{W}^k(\mathfrak{so}(nl),(l^{n}))).
\end{equation*}
 Moreover, the homomorphism $\Phi$ is surjective
 provided that $k+(l-1)n-2\neq0$. 
\end{Theorem}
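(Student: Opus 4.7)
The plan is to reduce the problem to the type-$A$ result recalled in the introduction, namely the surjective homomorphism $\Phi^{A}\colon Y_{\ve_1,\ve_2}(\widehat{\mathfrak{sl}}(n))\to \mathcal{U}(\mathcal{W}^{k'}(\mathfrak{gl}(nl),(l^{n})))$ established in \cite{U4} (see also \cite{KU}), and then to exhibit $\mathcal{W}^k(\mathfrak{so}(nl),(l^{n}))$ as a nice invariant subalgebra of $\mathcal{W}^{k'}(\mathfrak{gl}(nl),(l^{n}))$ so that $\Phi^{A}$ descends to $TY_{\ve_1,\ve_2}(\widehat{\mathfrak{sp}}(n))$. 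Concretely, I would first fix an involution $\sigma$ on $\mathfrak{gl}(nl)$ whose fixed subalgebra is $\mathfrak{so}(nl)$ and whose $(-1)$-eigenspace is compatible with the nilpotent $f$ of Jordan type $(l^{n})$; since both $l$ and $n$ are even, $\sigma$ can be chosen as conjugation by a block matrix combining the orthogonal form on $\mathbb{C}^{l}$ with the symplectic form on $\mathbb{C}^{n}$, so that $f\in\mathfrak{so}(nl)$ and $\sigma(f)=f$. Standard BRST arguments then identify $\mathcal{W}^{k}(\mathfrak{so}(nl),(l^{n}))$ with the $\sigma$-fixed subalgebra of $\mathcal{W}^{k'}(\mathfrak{gl}(nl),(l^{n}))$ for an appropriate shifted level $k'$; the shift fixes the form of $\ve_{1},\ve_{2}$ stated in the theorem, the $-2$ in $(l-1)n-2$ being the contribution of the dual Coxeter number of $\mathfrak{so}$ versus that of $\mathfrak{gl}$.

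Next I would construct $\Phi$ by composition: extend $\Phi^{A}$ to the completion $\widetilde{Y}_{\ve_1,\ve_2}(\widehat{\mathfrak{sl}}(n))$ (which is already done in \cite{U4}, since the image of each $J$-element is well defined in $\mathcal{U}$) and then restrict to the subalgebra $TY_{\ve_1,\ve_2}(\widehat{\mathfrak{sp}}(n))\subset\widetilde{Y}_{\ve_1,\ve_2}(\widehat{\mathfrak{sl}}(n))$. To see that the restriction lands inside $\mathcal{U}(\mathcal{W}^{k}(\mathfrak{so}(nl),(l^{n})))$, it is enough to check on the generating set $\{x,J(h_{i})\mid x\in\mathfrak{sp}(n),\ i\text{ in the twisted index set}\}$ that the images are $\sigma$-invariant. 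Since $\Phi^{A}$ is given by explicit formulas involving the standard matrix units, evaluation components and $J$-terms, this is a direct bookkeeping check using the fact that $TY_{\ve_1,\ve_2}(\widehat{\mathfrak{sp}}(n))$ is precisely the subalgebra defined using the $\theta$-fixed combinations of affine Yangian generators and $J(h_{i})$.

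For surjectivity under $k+(l-1)n-2\neq0$, I would run a dimension / filtration argument parallel to \cite{U4}. On the target side, $\mathcal{W}^{k}(\mathfrak{so}(nl),(l^{n}))$ has a strong generating set obtained by symmetrising the rectangular generators of $\mathcal{W}^{k'}(\mathfrak{gl}(nl),(l^{n}))$ under $\sigma$. By the surjectivity of $\Phi^{A}$, each generator of the gl-type $W$-algebra is hit, so each $\sigma$-symmetric combination lies in $\Phi(\widetilde{Y})$; I would then show that each such combination already comes from $TY_{\ve_1,\ve_2}(\widehat{\mathfrak{sp}}(n))$, using the fact that this twisted affine Yangian is the preimage (within a fixed filtered piece) of the $\sigma$-fixed generators under $\Phi^{A}$. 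The nonvanishing condition $k+(l-1)n-2\neq 0$ enters exactly as in type $A$: it guarantees that the evaluation component of $\Phi^{A}$ is nondegenerate, hence the image of $J(h_{i})$ and $J(x^{\pm}_{i})$ produces nontrivial rectangular generators in higher weight.

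I expect the main obstacle to be the verification that the explicit images of the twisted generators under $\Phi^{A}$ indeed satisfy the defining relations of $TY_{\ve_1,\ve_2}(\widehat{\mathfrak{sp}}(n))$, in particular the coideal-compatible Serre-type and quartic $J$-relations. This is similar in spirit to the verifications in \cite{BR1} and \cite{Bro} for the finite twisted case, but it is considerably heavier here because one must simultaneously track the double loop structure and the explicit rectangular generators of the vertex algebra. A secondary, more technical difficulty is the precise identification of the shifted level $k'$ in the type-$A$ $W$-algebra that makes its $\sigma$-fixed subalgebra isomorphic to $\mathcal{W}^{k}(\mathfrak{so}(nl),(l^{n}))$ at the stated value of the Yangian parameters.
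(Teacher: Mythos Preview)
Your approach has a genuine gap at its first step. The identification of $\mathcal{W}^{k}(\mathfrak{so}(nl),(l^{n}))$ with the $\sigma$-fixed subalgebra of some $\mathcal{W}^{k'}(\mathfrak{gl}(nl),(l^{n}))$ is \emph{not} a ``standard BRST argument'': quantum Hamiltonian reduction does not commute with taking fixed points under an outer involution in any routine way. The BRST complex for $\mathfrak{so}(nl)$ has a different ghost system from that for $\mathfrak{gl}(nl)$, and the level shifts are governed by different dual Coxeter numbers. No such fixed-point statement for these affine rectangular $W$-algebras is available in the literature, and proving it from scratch would be at least as hard as the theorem itself. Everything else in your outline rests on this unproved identification.

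The paper never passes through $\mathcal{W}^{k'}(\mathfrak{gl}(nl),(l^{n}))$. Instead it works directly with the Miura map $\mu_{D}$ for the type-$D$ $W$-algebra, which embeds $\mathcal{W}^{k}(\mathfrak{so}(nl),(l^{n}))$ into a tensor power of $V^{\Gamma}(\mathfrak{gl}(n))$ (with $l/2$ factors, already reflecting the folding). The map $\Phi$ is \emph{defined} by the requirement that $\widetilde{\mu}_{D}\circ\Phi$ equal the composite of the iterated coproduct $\widetilde{\Delta}$ with suitably shifted evaluation maps $\ev_{\xi_{r}}$; the substantive work is the explicit verification that the image of each generator of $TY_{\ve_{1},\ve_{2}}(\widehat{\mathfrak{sp}}(n))$ under that composite lies in $\mathrm{Im}\,\widetilde{\mu}_{D}$. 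This is done by first constructing explicit elements $W^{(1)}_{i,j},W^{(2)}_{i,j}$ of the $W$-algebra (Theorem~\ref{GEN}), computing their Miura images, and matching term by term (Claim~\ref{main claim}). Surjectivity is proved by showing directly that each $W^{(r)}_{i,j}t^{s}$ lies in $\mathrm{Im}\,\Phi$, using the commutation relations of Lemma~\ref{Lemma}; the hypothesis $\alpha=k+(l-1)n-2\neq 0$ enters because one must divide by $\alpha$ to isolate $W^{(2)}_{i,i}t^{s}$ from a certain bracket.

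A secondary confusion: since $TY_{\ve_{1},\ve_{2}}(\widehat{\mathfrak{sp}}(n))$ is \emph{defined} as a subalgebra of $\widetilde{Y}_{\ve_{1},\ve_{2}}(\widehat{\mathfrak{sl}}(n))$, there is nothing to verify about ``images of the twisted generators satisfying the defining relations of $TY$''. Any homomorphism out of $\widetilde{Y}$ restricts to $TY$ automatically; the only question is whether the image lands in the smaller target $\mathcal{U}(\mathcal{W}^{k}(\mathfrak{so}(nl),(l^{n})))$, and that is precisely what the paper checks via $\widetilde{\mu}_{D}$.
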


By Theorem~\ref{t1}, any (irreducible) representation of $\mathcal{W}^k(\mathfrak{so}(nl),(l^{n}))$ can be pulled back as that of $TY_{\ve_1,\ve_2}(\widehat{\mathfrak{sp}}(n))$. We note that the homomorphism $\Phi$ can be written by using the coproduct and the evaluation map for the Guay's affine Yangian as in \cite{KU}.
\section*{Acknowledgement}
The author wishes to express his gratitude to his supervisor Tomoyuki Arakawa for suggesting lots of advice to improve this paper. The author is particularly grateful for the assistance given by Naoki Genra. This work was supported by Iwadare Scholarship and and JSPS KAKENHI, Grant-in-Aid for JSPS Fellows, Grant Number JP20J12072. 
\section{Rectangular $W$-algebras of type $D$}
For all $n\in\mathbb{Z}_{>0}$,
let $I_n$ be $\{-n+1, -n+3, \dots, n-1\}$. Then, $\mathfrak{gl}(n)$ has a basis $\{e_{i,j}\mid i,j\in I_n\}$, where $e_{i,j}$ is a matrix unit. Using an $n \times n$ matrix $J_n\in\mathfrak{gl}(n)$ whose $(i,j)$ component is equal to $\delta_{i,-j}$, we can set $\mathfrak{so}(n)$ as $\{x \in \mathfrak{gl}(n)\mid x^T J_n + J_n x = 0\}$,
where $x^T$ is the transpose of $x$. We remark that $\mathfrak{so}(n)$ is not simple but reductive in the sense of this definition.
Under this notation, $\mathfrak{so}(n)$ is spanned by the set of matrices $\{f_{i,j}=e_{i,j} - e_{-j,-i}\mid i,j \in I_n\}$.

In this paper, we suppose that $l$ and $n$ are even positive. For all $a\in I_{nl}$, we take $\row(a)\in I_n$ and $\col(a)\in I_l$ such that $a=(\col(a))n+\row(a)$.
By the definition of $\row(a)$ and $\col(a)$, we have $\row(-a) = -\row(a)$ and $\col(-a) = -\col(a)$. 

We take a nilpotent element $f$ as follows;
\begin{equation*}
f=\sum_{\substack{a,b \in  I_{nl} \\ \row(a) = \row(b) \\ 
           \col(b) +2= \col(a) \geq 2} }
       f_{a,b}+\sum_{\substack{a,b \in  I_{nl} \\ \row(a) = \row(b)>0 \\ 
           \col(b) +2= \col(a)=1} }
       f_{a,b}.
\end{equation*}
We also set 
\begin{equation*}
\mathfrak{g}_p=\bigoplus_{\substack{a,b\in I_{nl},\\\col(b)-\col(a)=p}}\mathbb{C}f_{a,b}\subset\mathfrak{so}(nl).
\end{equation*}
and fix the $\mathfrak{sl}_2$-triple $(x, e, f)$ such that
\begin{equation*}
\mathfrak{g}_p=\{y\in\mathfrak{so}(nl)\mid[x,y]=py\}.
\end{equation*}
Let $\mathfrak{b}=\displaystyle\bigoplus_{r \leq 0}\limits \mathfrak{g}_r$ and $\mathfrak{c}=\displaystyle\bigoplus_{r <0}\limits\mathfrak{g}_r$, then $\mathfrak{b}$ and $\mathfrak{c}$ are subalgebras of $\mathfrak{so}(nl)$. We take an invariant inner product on $\mathfrak{so}(nl)$ by
\begin{align*}
\quad(f_{a_1,b_1},f_{a_2,b_2})
&=k(\delta_{a_1,b_2}\delta_{b_1,a_2}-\delta_{a_1+a_2,0}\delta_{b_1+b_2,0}).
\end{align*}

We fix some notations about vertex algebras. For a vertex algebra $V$, we denote the generating field associated with $v\in V$ by $v(z)=\displaystyle\sum_{n\in\mathbb{Z}}\limits v_{(n)}z^{-n-1}$ and the vacuum vector (resp. the translation operator) by $|0\rangle$ (resp. $\partial$). We also denote the OPE of $u,v\in V$ by
\begin{equation*}
u(z)v(w)\sim\displaystyle\sum_{s\geq0}\limits \dfrac{(u_{(s)}v)(w)}{(z-w)^{s+1}}.
\end{equation*}
There exists an inner product on $\mathfrak{so}(nl)$ determined by
\begin{align*}
\kappa(f_{a_1,b_1},f_{a_2,b_2})
&=(\delta_{a_1,b_2}\delta_{b_1,a_2}-\delta_{a_1+a_2,0}\delta_{b_1+b_2,0})\alpha+\delta_{a_1,b_1}\delta_{a_2,b_2}(\delta_{\col(a_1),\col(a_2)}-\delta_{\col(a_1)+\col(a_2),0}),
\end{align*}
where $\alpha=k+(l-1)n-2$. Let $\widehat{\mathfrak{b}}=\mathfrak{b}[t^{\pm1}]\oplus\mathbb{C}y$ be the affinization of $\mathfrak{b}$ associated with the inner product $\kappa$. We define a left $\widehat{\mathfrak{b}}$-module $V^\kappa(\mathfrak{b})$ as $U(\widehat{\mathfrak{b}})/U(\widehat{\mathfrak{b}})(\mathfrak{b}[t]\oplus\mathbb{C}(y-1))\cong U(\mathfrak{b}[t^{-1}]t^{-1})$. 
Then, $V^\kappa(\mathfrak{b})$ has a vertex algebra structure whose vacuum vector is $1$ and the generating field $(ut^{-1})(z)$ is equal to $\displaystyle\sum_{s\in\mathbb{Z}}\limits(ut^s)z^{-s-1}$ for all $u\in\mathfrak{b}$. We denote the generating field $(ut^{-1})(z)$ also by $u(z)$. We call $V^\kappa(\mathfrak{b})$ the universal affine vertex algebra associated with $(\mathfrak{b},\kappa)$.
By the definition of $V^\kappa(\mathfrak{b})$, generating fields $u(z)$ and $v(z)$ satisfy
\begin{gather}
u(z)v(w)\sim\dfrac{[u,v](w)}{z-w}+\dfrac{\kappa(u,v)}{(z-w)^2}\label{OPE1}
\end{gather}
for all $u,v\in\mathfrak{b}$.

Let $\mathfrak{a}$ be a Lie superalgebra generated by $\{J^{(u)},\psi_v\mid u\in\mathfrak{b},v\in \mathfrak{c}\}$ with the following commutator relations;
\begin{gather*}
[J^{(u)},J^{(v)}]=J^{([u,v])},\quad [J^{(u)},\psi_{v}]=\psi_{[u,v]},\quad [\psi_u,\psi_v]=0,
\end{gather*}
where $J^{(u)}$ is an even element and $\psi_v$ is an odd element. We define a vertex algebra $V^{\widetilde{\kappa}}(\mathfrak{a})$ associated with a Lie superalgebra $\mathfrak{a}$ and the inner product on $\mathfrak{a}$ determined by
\begin{gather*}
\widetilde{\kappa}(J^{(u)},J^{(v)})=\kappa(u,v),\quad \widetilde{\kappa}(J^{(u)},\psi_v)=\widetilde{\kappa}(\psi_u,\psi_v)=0.
\end{gather*}
In this section, we regard $V^{\widetilde{\kappa}}(\mathfrak{a})$ (resp. \ $V^\kappa(\mathfrak{b})$) as a non-associative superalgebra whose product $\cdot$ is defined by $u\cdot v=u_{(-1)}v$.
In order to simplify the notation, we denote $J^{(u)}t^{s}\in V^{\widetilde{\kappa}}(\mathfrak{a})\text{ or }V^\kappa(\mathfrak{b})$ by $u[s]$ and set
\begin{gather*}
\widehat{i}=\begin{cases}
0\quad\text{ if }i\geq0,\\
1\quad\text{ if }i<0.
\end{cases}
\end{gather*}
By \cite{KRW}, $\mathcal{W}^k(\mathfrak{so}(nl),(l^{n}))$ can be realized as a vertex subalgebra of $V^\kappa(\mathfrak{b})$.
\begin{Definition}\label{T125}
We define $\mathcal{W}^k(\mathfrak{so}(nl),(l^n))$ as
\begin{equation*}
\mathcal{W}^k(\mathfrak{so}(nl),(l^n))=\{y\in V^\kappa(\mathfrak{b})\mid d_0(y)=0\},
\end{equation*}
where $d_0 \colon V^{\kappa}(\mathfrak{b})\to V^{\widetilde{\kappa}}(\mathfrak{a})$ is an odd differential determined by
\begin{gather}
d_0(1)=0,\quad[d_0,\partial]=0,\label{afo2}\\
\begin{align}
d_0(f_{a,b}[-1])
&=\sum_{\col(b)\leq\col(c)<\col(a)}f_{c,b}[-1]\psi_{f_{a,c}}[-1]-\sum_{\substack{\col(b)<\col(c)\leq\col(a)}}\psi_{f_{c,b}}[-1]f_{a,c}[-1]\nonumber\\
&\quad+\alpha\psi_{f_{a,b}}[-2]+\delta(\col(a)>\col(-a)>\col(b))\psi_{f_{a,b}}[-2]\nonumber\\
&\quad+\delta(\col(a)\geq\col(-b)>\col(b))\psi_{f_{a,b}}[-2]\nonumber\\
&\quad+{(-1)}^{\widehat{p+2}+(\widehat{p}+\widehat{p+2})\cdot\widehat{i}}\psi_{f_{a+2n,b}}[-1]-{(-1)}^{\widehat{q}+(\widehat{q}+\widehat{q-2})\cdot\widehat{j}}\psi_{f_{a,b-2n}}[-1],
\end{align}\label{afo}
\end{gather}
where $i=\row(a),j=\row(b),p=\col(a),q=\col(b)$ and we assume that $\begin{cases}
f_u=0\text{ if }u\notin\mathfrak{b},\\
\psi_{f_v}=0\text{ if }v\notin\mathfrak{c}.
\end{cases}$..
\end{Definition}
Especially, we have
\begin{align}
d_0(f_{a,b}[-1])&={(-1)}^{\widehat{p+2}+(\widehat{p}+\widehat{p+2})\cdot\widehat{j}}\psi_{f_{a+2n,b}}[-1]-{(-1)}^{\widehat{p}+(\widehat{p}+\widehat{p-2})\cdot\widehat{i}}\psi_{f_{a,b-2n}}[-1]\label{yabu1}
\end{align}
provided that $\col(a)=\col(b)=p,\row(a)=j,\row(b)=i$ and
\begin{align}
&\quad d_0(f_{a,b}[-1])\nonumber\\
&=\sum_{\col(c)=\col(b)}f_{c,b}[-1]\psi_{f_{a,c}}[-1]-\sum_{\substack{\col(a)=\col(c)}}\psi_{f_{c,b}}[-1]f_{a,c}[-1]+\alpha\psi_{f_{a,b}}[-2]\nonumber\\
&\quad+\delta_{p,1}\psi_{f_{a,b}}[-2]+{(-1)}^{\widehat{p+2}+(\widehat{p}+\widehat{p+2})\cdot\widehat{j}}\psi_{f_{a+2n,b}}[-1]-{(-1)}^{\widehat{p-2}+(\widehat{p-2}+\widehat{p-4})\cdot\widehat{i}}\psi_{f_{a,b-2n}}[-1],\label{yabu2}
\end{align}
provided that $\col(a)=\col(b)+2=p,\row(a)=j,\row(b)=i$.

In the following theorem, we give two kinds of elements of $\mathcal{W}^k(\mathfrak{so}(nl),(l^n))$, which are in fact generators of $\mathcal{W}^k(\mathfrak{so}(nl),(l^n))$ (see Theorem~\ref{gener}).
\begin{Theorem}\label{GEN}
For $i,j\in I_n$, the rectangular $W$-algebra $\mathcal{W}^k(\mathfrak{so}(nl),(l^n))$ has the following elements;
\begin{align*}
W^{(1)}_{i,j}&=\sum_{\substack{\row(a)=j,\row(b)=i,\\\col(a)=\col(b)=p}}{(-1)}^{\widehat{p}\cdot(\widehat{j}+\widehat{i})}f_{a,b}[-1],
\end{align*}\\
\begin{align*}
W^{(2)}_{i,j}&=\alpha\sum_{\substack{\row(a)=j,\row(b)=i,\\\col(a)=\col(b)=p}}{(-1)}^{\widehat{p}\cdot(\widehat{j}+\widehat{i})}\dfrac{p}{2}f_{a,b}[-2]+\sum_{\substack{\row(a)=j,\row(b)=i,\\\col(a)=\col(b)+2=p}}{(-1)}^{\widehat{p}+\widehat{p}\cdot\widehat{j}+\widehat{p-2}\cdot\widehat{i}}f_{a,b}[-1]\\
&\quad+\sum_{\substack{\row(a_2)=j,\row(b_1)=i,\\p=\col(a_1)=\col(b_1)<\col(a_2)=\col(b_2)=q\\\row(a_1)=\row(b_2)=r}}{(-1)}^{(\widehat{r}+\widehat{i})\cdot\widehat{p}+(\widehat{j}+\widehat{r})\cdot\widehat{q}}f_{a_1,b_1}[-1]f_{a_2,b_2}[-1]\\
&\quad+\dfrac{1}{2}\sum_{\substack{\row(a)=j,\row(b)=i,\\\col(a)=\col(b)=p}}{(-1)}^{\widehat{p}+\widehat{p}\cdot(\widehat{j}+\widehat{i})}f_{a,b}[-2].
\end{align*}
\end{Theorem}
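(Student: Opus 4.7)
By Definition~\ref{T125}, it suffices to verify that $d_0(W^{(1)}_{i,j})=0$ and $d_0(W^{(2)}_{i,j})=0$. Both verifications proceed by direct computation from \eqref{yabu1} and \eqref{yabu2}, exploiting the facts that $d_0$ is an odd derivation with respect to the normally ordered product $u\cdot v=u_{(-1)}v$ and commutes with $\partial$ by \eqref{afo2}; the latter reduces $d_0(f_{a,b}[-2])$ to $\partial\,d_0(f_{a,b}[-1])$, converting each $\psi[-1]$ appearing there into a $\psi[-2]$.

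For $W^{(1)}_{i,j}$, every summand satisfies $\col(a)=\col(b)=p$, so only \eqref{yabu1} is relevant. Its image splits into two families of $\psi[-1]$-terms of shape $\psi_{f_{a+2n,b}}[-1]$ and $\psi_{f_{a,b-2n}}[-1]$. For a fixed target $\psi_{f_{A,B}}[-1]$, which necessarily has $\col(A)=\col(B)+2$, the first family contributes at column $p=\col(B)$ and the second at column $p=\col(A)$; a short identity on the parities of $p$, $i$, $j$ shows that the prefactor $(-1)^{\widehat{p}\cdot(\widehat{j}+\widehat{i})}$ makes these two contributions opposite. Extremal terms at $p=\pm(l-1)$ vanish because shifting a column past the boundary lands outside $\mathfrak{c}$, on which $\psi$ is zero by the convention of Definition~\ref{T125}.

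For $W^{(2)}_{i,j}$, I would apply $d_0$ summand by summand and organize the output according to its shape. The bilinear summand yields, by the derivation rule, quadratic contributions of shapes $\psi\cdot f$ and $f\cdot\psi$ obtained by applying \eqref{yabu1} to each factor. The second summand (with $\col(a)=\col(b)+2$) contributes, via \eqref{yabu2}, its own quadratic $f\cdot\psi$, $\psi\cdot f$ terms together with linear $\alpha\psi[-2]$, $\delta_{p,1}\psi[-2]$ and boundary $\psi[-1]$ corrections; the first and fourth summands contribute only $\psi[-2]$-terms through $[d_0,\partial]=0$. Cancellation then splits into three independent checks: the quadratic $\psi\cdot f$ and $f\cdot\psi$ terms from the bilinear summand cancel, for non-adjacent pairs of columns, in internal pairs within the bilinear sum itself, and for adjacent pairs against the first two lines of \eqref{yabu2}; the $\alpha\psi[-2]$-coefficient from \eqref{yabu2} matches the $\partial$-image of the $\alpha p/2\cdot f[-2]$ first summand, and the $\delta_{p,1}$-correction is absorbed by the fourth summand; the residual boundary $\psi[-1]$-corrections telescope across adjacent columns exactly as in the $W^{(1)}$-case.

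The principal obstacle is the density of sign bookkeeping: every contribution carries prefactors of the form $(-1)^{\widehat{p}+(\widehat{p}+\widehat{p\pm 2})\cdot\widehat{*}}$ and $(-1)^{\widehat{p}\cdot(\widehat{j}+\widehat{i})}$, whose compatibility must be checked under every reindexing involved in the telescoping, and one must in addition be careful that the bilinear cancellations at the extreme columns $p=-l+1$ and $q=l-1$ are correctly absorbed by the boundary $\psi$-vanishing convention. A case analysis on the parities of $p$, $i$, $j$ reduces the verification to finitely many elementary sign identities; the computation is tedious but local in the column index $p$, and is closely parallel to the type $A$ analogue treated in \cite{U4}.
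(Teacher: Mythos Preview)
Your proposal is correct and follows essentially the same route as the paper's proof: reduce to $d_0(W^{(r)}_{i,j})=0$, apply \eqref{yabu1} and \eqref{yabu2} summand by summand (using $[d_0,\partial]=0$ for the $f[-2]$-pieces), and verify that the resulting $\psi\cdot f$, $f\cdot\psi$, $\psi[-2]$, and boundary $\psi[-1]$ contributions cancel in exactly the groupings you describe. The paper differs only in that it carries out the sign manipulations explicitly rather than reducing them to a parity case analysis, and it treats only $r=2$ in detail, deferring $r=1$ as similar.
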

\begin{proof}
By Definition~\ref{T125}, it is enough to show that $d_0(W^{(r)}_{i,j})=0$. We only show the case when $r=2$. The case when $r=1$ is proven in a similar way.
By the definition of $W^{(2)}_{i,j}$, we have
\begin{align}
&\quad d_0(W^{(2)}_{i,j})\nonumber\\
&=d_0(\alpha\sum_{\substack{\row(a)=j,\row(b)=i,\\\col(a)=\col(b)=p}}{(-1)}^{\widehat{p}\cdot(\widehat{j}+\widehat{i})}\dfrac{p}{2}f_{a,b}[-2])+d_0(\sum_{\substack{\row(a)=j,\row(b)=i,\\\col(a)=\col(b)+2=p}}{(-1)}^{\widehat{p}+\widehat{p}\cdot\widehat{j}+\widehat{p-2}\cdot\widehat{i}}f_{a,b}[-1])\nonumber\\
&\quad+d_0(\sum_{\substack{\row(a_2)=j,\row(b_1)=i,\\p=\col(a_1)=\col(b_1)<\col(a_2)=\col(b_2)=q\\\row(a_1)=\row(b_2)=r}}{(-1)}^{(\widehat{r}+\widehat{i})\cdot\widehat{p}+(\widehat{j}+\widehat{r})\cdot\widehat{q}}f_{a_1,b_1}[-1]f_{a_2,b_2}[-1])\nonumber\\
&\quad+\dfrac{1}{2}d_0(\sum_{\substack{\row(a)=j,\row(b)=i,\\\col(a)=\col(b)=p}}{(-1)}^{\widehat{p}+\widehat{p}\cdot(\widehat{j}+\widehat{i})}f_{a,b}[-2]).\label{yab}
\end{align}
We compute each terms in the right hand side of \eqref{yab}. 
First, we compute the first term of the right hand side of \eqref{yab}. By \eqref{yabu1} and \eqref{afo2}, we can rewrite it as
\begin{align}
&\alpha\sum_{\substack{\row(a)=j,\row(b)=i,\\\col(a)=\col(b)=p}}{(-1)}^{\widehat{p+2}+\widehat{p+2}\cdot\widehat{j}+\widehat{p}\cdot\widehat{i}}\dfrac{p}{2}\psi_{f_{a+2n,b}}[-2]\nonumber\\
&\quad-\alpha\sum_{\substack{\row(a)=j,\row(b)=i,\\\col(a)=\col(b)=p}}{(-1)}^{\widehat{p}+\widehat{p-2}\cdot\widehat{i}+\widehat{p}\cdot\widehat{j}}\dfrac{p}{2}\psi_{f_{a,b-2n}}[-2].\label{yabu3.1}
\end{align}
Replacing $a$ and $b$ with $a+2n$ and $b+2n$, we can rewrite the second term of \eqref{yabu3.1} as
\begin{gather}
\alpha\sum_{\substack{\row(a)=j,\row(b)=i,\\\col(a)=\col(b)=p}}{(-1)}^{\widehat{p+2}+\widehat{p+2}\cdot\widehat{j}+\widehat{p}\cdot\widehat{i}}\dfrac{p+2}{2}\psi_{f_{a+2n,b}}[-2]\label{yabu3.2}
\end{gather}
Since $\col(a+2n)=\col(a)+2$, we find that
\begin{align}
&\quad\text{the first term of the right hand side of \eqref{yab}}\nonumber\\
&=-\alpha\sum_{\substack{\row(a)=j,\row(b)=i,\\\col(a)=\col(b)=p}}{(-1)}^{\widehat{p+2}+\widehat{p+2}\cdot\widehat{j}+\widehat{p}\cdot\widehat{i}}\psi_{f_{a+2n,b}}[-2]\label{yabu3}
\end{align}
by applying \eqref{yabu3.2} to \eqref{yabu3.1}.

Next, we compute the third term of the right hand side of \eqref{yab}. 
By \eqref{yabu1}, we can rewrite it as
\begin{align}
&\sum_{\substack{\row(a_2)=j,\row(b_1)=i,\\p=\col(a_1)=\col(b_1)<\col(a_2)=\col(b_2)=q\\\row(a_1)=\row(b_2)=r}}{(-1)}^{\beta+\widehat{p+2}+(\widehat{p}+\widehat{p+2})\cdot\widehat{j}}\psi_{f_{a_1+2n,b_1}}[-1]f_{a_2,b_2}[-1]\nonumber\\
&\quad-\sum_{\substack{\row(a_2)=j,\row(b_1)=i,\\p=\col(a_1)=\col(b_1)<\col(a_2)=\col(b_2)=q\\\row(a_1)=\row(b_2)=r}}{(-1)}^{\beta+\widehat{p}+(\widehat{p}+\widehat{p-2})\cdot\widehat{i}}\psi_{f_{a_1,b_1-2n}}[-1]f_{a_2,b_2}[-1]\nonumber\\
&\quad+\sum_{\substack{\row(a_2)=j,\row(b_1)=i,\\p=\col(a_1)=\col(b_1)<\col(a_2)=\col(b_2)=q\\\row(a_1)=\row(b_2)=r}}{(-1)}^{\beta+\widehat{q+2}+(\widehat{q}+\widehat{q+2})\cdot\widehat{j}}f_{a_1,b_1}[-1]\psi_{f_{a_2+2n,b_2}}[-1]\nonumber\\
&\quad-\sum_{\substack{\row(a_2)=j,\row(b_1)=i,\\p=\col(a_1)=\col(b_1)<\col(a_2)=\col(b_2)=q\\\row(a_1)=\row(b_2)=r}}{(-1)}^{\beta+\widehat{q}+(\widehat{q}+\widehat{q-2})\cdot\widehat{i}}f_{a_1,b_1}[-1]\psi_{f_{a_2,b_2-2n}}[-1],\label{yabu1156}
\end{align}
where $\beta={(\widehat{r}+\widehat{i})\cdot\widehat{p}+(\widehat{j}+\widehat{r})\cdot\widehat{q}}$. Let us set
\begin{align*}
\beta_1&=\beta+\widehat{p+2}+(\widehat{p}+\widehat{p+2})\cdot\widehat{j}\\
&=(\widehat{r}+\widehat{i})\cdot\widehat{p}+(\widehat{j}+\widehat{r})\cdot\widehat{q}+\widehat{p+2}+(\widehat{p}+\widehat{p+2})\cdot\widehat{r}.
\end{align*}
Then, we can rewrite the first term of \eqref{yabu1156} as
\begin{equation*}
\sum_{\substack{\row(a_2)=j,\row(b_1)=i,\\p=\col(a_1)=\col(b_1)<\col(a_2)=\col(b_2)=q\\\row(a_1)=\row(b_2)=r}}{(-1)}^{\beta_1}\psi_{f_{a_1+2n,b_1}}[-1]f_{a_2,b_2}[-1]
\end{equation*}
By setting $p'=p-2$, $a_1'=a_1-2n$, and $b_1=b_1-2n$, we can rewrite the second term of \eqref{yabu1156} as
\begin{align*}
&\quad\sum_{\substack{\row(a_2)=j,\row(b_1)=i,\\p'=\col(a_1')-2=\col(b_1')-2\\\qquad<\col(a_2)=\col(b_2)=q\\\row(a_1)=\row(b_2)=r}}{(-1)}^{{(\widehat{r}+\widehat{i})\cdot\widehat{p'+2}+(\widehat{j}+\widehat{r})\cdot\widehat{q}}+\widehat{p'+2}+(\widehat{p'+2}+\widehat{p'})\cdot\widehat{i}}\psi_{f_{a_1'+2n,b_1'}}[-1]f_{a_2,b_2}[-1]\\
&=\sum_{\substack{\row(a_2)=j,\row(b_1)=i,\\p'=\col(a_1')-2=\col(b_1')-2\\\qquad<\col(a_2)=\col(b_2)=q\\\row(a_1)=\row(b_2)=r}}{(-1)}^{{\widehat{r}\cdot\widehat{p'+2}+(\widehat{j}+\widehat{r})\cdot\widehat{q}}+\widehat{p'+2}+\widehat{p'}\cdot\widehat{i}}\psi_{f_{a_1'+2n,b_1'}}[-1]f_{a_2,b_2}[-1].
\end{align*}
Then, by a direct computation, we can rewrite the sum of the first two terms of \eqref{yabu1156} as
\begin{gather}
\sum_{\substack{\row(a_2)=j,\row(b_1)=i,\\\col(a_1)+2=\col(b_1)+2=\col(a_2)=\col(b_2)=q\\\row(a_1)=\row(b_2)}}{(-1)}^{\widehat{i}\cdot\widehat{q-2}+\widehat{j}\cdot\widehat{q}+\widehat{q}}\psi_{f_{a_1+2n,b_1}}[-1]f_{a_2,b_2}[-1]\label{yabu4.2}
\end{gather}
Similarly, we find that the third term of \eqref{yabu1156} is equal to
\begin{equation*}
\sum_{\substack{\row(a_2)=j,\row(b_1)=i,\\p=\col(a_1)=\col(b_1)<\col(a_2)-2=\col(b_2)-2=q\\\row(a_1)=\row(b_2)=r}}{(-1)}^{\beta_2}f_{a_1,b_1}[-1]\psi_{f_{a_2,b_2-2n}}[-1]
\end{equation*}
and
the 4-th term of \eqref{yabu1156} is equal to
\begin{equation*}
\sum_{\substack{\row(a_2)=j,\row(b_1)=i,\\p=\col(a_1)=\col(b_1)<\col(a_2)=\col(b_2)=q\\\row(a_1)=\row(b_2)=r}}{(-1)}^{\beta_2}f_{a_1,b_1}[-1]\psi_{f_{a_2,b_2-2n}}[-1],
\end{equation*}
where
\begin{align*}
\beta_2&=\beta+\widehat{q}+(\widehat{q}+\widehat{q-2})\cdot\widehat{i}\\
&=(\widehat{r}+\widehat{i})\cdot\widehat{p}+(\widehat{j}+\widehat{r})\cdot\widehat{q}+\widehat{q}+(\widehat{q}+\widehat{q-2})\cdot\widehat{r}.
\end{align*}
Then, we can rewrite the sum of the last two terms of \eqref{yabu1156} as
\begin{gather}
-\sum_{\substack{\row(a_2)=j,\row(b_1)=i,\\\col(a_1)=\col(b_1)=\col(a_2)-2=\col(b_2)-2=p\\\row(a_1)=\row(b_2)}}{(-1)}^{\widehat{i}\cdot\widehat{p}+\widehat{j}\cdot\widehat{p+2}+\widehat{p+2}}f_{a_1,b_1}[-1]\psi_{f_{a_2,b_2-2n}}[-1].\label{yabu4.3}
\end{gather}
Adding \eqref{yabu4.2} and \eqref{yabu4.3}, we have
\begin{align}
&\quad\text{the third term of the right hand side of \eqref{yab}}\nonumber\\
&=\sum_{\substack{\row(a_2)=j,\row(b_1)=i,\\\col(a_1)+2=\col(b_1)+2=\col(a_2)=\col(b_2)=q\\\row(a_1)=\row(b_2)}}{(-1)}^{\widehat{i}\cdot\widehat{q-2}+\widehat{j}\cdot\widehat{q}+\widehat{q}}\psi_{f_{a_1+2n,b_1}}[-1]f_{a_2,b_2}[-1]]\nonumber\\
&\quad-\sum_{\substack{\row(a_2)=j,\row(b_1)=i,\\\col(a_1)=\col(b_1)=\col(a_2)-2=\col(b_2)-2=p\\\row(a_1)=\row(b_2)}}{(-1)}^{\widehat{i}\cdot\widehat{p}+\widehat{j}\cdot\widehat{p+2}+\widehat{p+2}}f_{a_1,b_1}[-1]\psi_{f_{a_2,b_2-2n}}[-1].\label{yabu4}
\end{align}

Next, we compute the 4-th term of the right hand side of \eqref{yab}. By a direct computation, we obtain
\begin{align}
&\quad\dfrac{1}{2}d_0(\sum_{\substack{\row(a)=j,\row(b)=i,\\\col(a)=\col(b)=p}}{(-1)}^{\widehat{p}+\widehat{p}\cdot(\widehat{j}+\widehat{i})}f_{a,b}[-2])\nonumber\\
&=\dfrac{1}{2}\sum_{\substack{\row(a)=j,\row(b)=i,\\\col(a)=\col(b)=p}}{(-1)}^{\widehat{p+2}+(\widehat{p}+\widehat{p+2})\widehat{j}+\widehat{p}+\widehat{p}\cdot(\widehat{j}+\widehat{i})}\psi_{f_{a+2n,b}}[-2]\nonumber\\
&\quad-\dfrac{1}{2}\sum_{\substack{\row(a)=j,\row(b)=i,\\\col(a)=\col(b)=p}}{(-1)}^{\widehat{p}+(\widehat{p}+\widehat{p-2})\widehat{i}+\widehat{p}+\widehat{p}\cdot(\widehat{j}+\widehat{i})}\psi_{f_{a,b-2n}}[-2].\label{yabu4.198}
\end{align}
By a direct computation, we find that the second term of the right hand side of \eqref{yabu4.198} is equal to
\begin{align}
&-\dfrac{1}{2}\sum_{\substack{\row(a)=j,\row(b)=i,\\\col(a)=\col(b)=p}}{(-1)}^{\widehat{p+2}+(\widehat{p+2}+\widehat{p})\widehat{i}+\widehat{p+2}+\widehat{p+2}\cdot(\widehat{j}+\widehat{i})}\psi_{f_{a+2n,b}}[-2]\label{yabuyabu4.198}
\end{align}
Then, we have
\begin{align}
\text{the 4-th term of the right hand side of \eqref{yab}}
&=-{(-1)}^{\widehat{i}}\psi_{f_{n+j,-n+i}}[-2]\label{yabu923}
\end{align}
by applying \eqref{yabuyabu4.198} to \eqref{yabu4.198}.

Finally, we compute the second term of \eqref{yab}. 
By \eqref{yabu2}, we can rewrite the right hand side of the second term of \eqref{yab} as
\begin{align}
&\sum_{\substack{\row(a)=j,\row(b)=i,\\\col(a)=\col(b)+2=\col(c)+2=p}}{(-1)}^{\widehat{p}+\widehat{p}\cdot\widehat{j}+\widehat{p-2}\cdot\widehat{i}}f_{c,b}[-1]\psi_{f_{a,c}}[-1]\nonumber\\
&\quad-\sum_{\substack{\row(a)=j,\row(b)=i,\\\col(a)=\col(c)=\col(b)+2=p}}{(-1)}^{\widehat{p}+\widehat{p}\cdot\widehat{j}+\widehat{p-2}\cdot\widehat{i}}\psi_{f_{c,b}}[-1]f_{a,c}[-1]\nonumber\\
&\quad+\alpha\sum_{\substack{\row(a)=j,\row(b)=i,\\\col(a)=\col(b)+2=p}}{(-1)}^{\widehat{p}+\widehat{p}\cdot\widehat{j}+\widehat{p-2}\cdot\widehat{i}}\psi_{f_{a,b}}[-2]+{(-1)}^{\widehat{i}}\psi_{f_{n+j,-n+i}}[-2]\nonumber\\
&\quad+\sum_{\substack{\row(a)=j,\row(b)=i,\\\col(a)=\col(b)+2=p}}{(-1)}^{\widehat{p}+\widehat{p+2}+\widehat{p+2}\cdot\widehat{j}+\widehat{p-2}\cdot\widehat{i}}\psi_{f_{a+2n,b}}[-1]\nonumber\\
&\quad-\sum_{\substack{\row(a)=j,\row(b)=i,\\\col(a)=\col(b)+2=p}}{(-1)}^{\widehat{p+2}+\widehat{p}+\widehat{p-2}\cdot\widehat{i}+\widehat{p+2}\cdot\widehat{j}}\psi_{f_{a+2n,b}}[-1].\label{yabu2.1}
\end{align}
We can easily find that the sum of the last two terms of \eqref{yabu2.1} is equal to zero. We also find that the sum of \eqref{yabu4}(resp. \eqref{yabu3}, \eqref{yabu923}) and first and second terms (resp. third term, 4-th term) of \eqref{yabu2.1} is equal to zero.  
\end{proof}
\begin{Theorem}\label{gener}
Assume that $n\geq4$ and $\alpha\neq0$. The rectangular $W$-algebra $\mathcal{W}^k(\mathfrak{so}(nl),(l^n))$ is generated by $\{W^{(r)}_{i,j}\mid 1\leq i,j\leq n,r=1,2\}$.
\end{Theorem}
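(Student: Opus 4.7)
The plan is to invoke the general strong-generation result for vertex $W$-algebras (due to Kac-Wakimoto and De Sole-Kac) which equips $\mathcal{W}^k(\mathfrak{so}(nl),(l^n))$ with a PBW-type strong generating set indexed by a homogeneous basis of the centralizer $\mathfrak{so}(nl)^f$. For the rectangular nilpotent $(l^n)$, the graded pieces of $\mathfrak{so}(nl)^f$ with respect to the Dynkin grading are alternately copies of $\mathfrak{so}(n)$ and $\mathfrak{sp}(n)$, one at each conformal level $r=1,\dots,l$. Accordingly, the strong generating set can be arranged in the form $\{W^{(r)}_{i,j}\}$ with $r \in \{1,\dots,l\}$ and $(i,j)\in I_n^2$ subject to the symmetry relations coming from the $\mathfrak{so}$-involution (so that $W^{(r)}_{i,j} = \pm W^{(r)}_{-j,-i}$). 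Hence the theorem reduces to showing that each $W^{(r)}_{i,j}$ with $r\geq 3$ lies in the vertex subalgebra generated by $W^{(1)}_{i,j}$ and $W^{(2)}_{i,j}$.

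I would proceed by induction on $r$. The inductive step is built on the normally ordered product
\[
\sum_{k\in I_n}(-1)^{?}\,(W^{(1)}_{i,k})_{(-1)}W^{(r)}_{k,j},
\]
where the sign is dictated by the $\widehat{p}$-grading. Writing $W^{(1)}_{i,k}$ as a sum over all $p$ of diagonal blocks $f_{a,b}[-1]$ with $\col(a)=\col(b)=p$, and writing $W^{(r)}_{k,j}$ with its leading linear piece $f_{a,b}[-1]$ at column difference $2(r-1)$, the cross-pairings with $\col(a_1)<\col(a_2)$ reproduce exactly the quadratic summand appearing in the definition of $W^{(r+1)}_{i,j}$. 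The Wick contractions coming from $\kappa$ produce, via the $\alpha$-term in the inner product, the desired new linear piece $f_{a,b}[-1]$ with $\col(a)=\col(b)+2r$ that characterizes $W^{(r+1)}_{i,j}$, together with lower-column-difference corrections that already lie in the subalgebra generated by $W^{(1)}$ and $W^{(2)}$ by the inductive hypothesis. A parallel treatment, using $W^{(2)}$ to inject the $f_{a,b}[-2]$ pieces needed to form the derivative part of $W^{(r+1)}_{i,j}$, completes the inductive construction.

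The principal obstacle is sign and index bookkeeping. The definitions of the $W^{(r)}_{i,j}$ are dressed with the parity factors $(-1)^{\widehat{p}(\widehat{j}+\widehat{i})}$ and $(-1)^{\widehat{p}+\widehat{p}\cdot\widehat{j}+\widehat{p-2}\cdot\widehat{i}}$, and these conspire with the signs produced by the Wick contractions of $\kappa$ and by the $\mathfrak{so}$-symmetry $f_{a,b}=-f_{-b,-a}$. One must verify, with the same careful sign tracking that appears in the proof of Theorem~\ref{GEN}, that the spurious terms in the OPE collectively cancel or fall under the inductive hypothesis. The hypothesis $\alpha\neq 0$ enters because the coefficient of the newly extracted linear summand is an explicit nonzero multiple of $\alpha$, which must be inverted to solve for $W^{(r+1)}_{i,j}$; the hypothesis $n\geq 4$ is needed so that the quadratic summation defining $W^{(2)}_{i,j}$ has enough distinct indices $r\in I_n$ (beyond the boundary pairs with $r=\pm i,\pm j$) for the above extraction to succeed without degenerate collapses. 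Assuming these checks go through, induction on $r$ exhibits each $W^{(r)}_{i,j}$, $r\leq l$, as a polynomial in $W^{(1)}_{i',j'}$ and $W^{(2)}_{i',j'}$ under normally ordered products and derivatives, yielding the claim.
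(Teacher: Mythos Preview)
Your proposal contains a genuine gap in the central mechanism. You claim that in the normally ordered product $\sum_k (W^{(1)}_{i,k})_{(-1)}W^{(r)}_{k,j}$ the Wick contractions coming from $\kappa$ produce a new linear piece $f_{a,b}[-1]$ with $\col(a)=\col(b)+2r$. This cannot happen. Every summand of $W^{(1)}_{i,k}$ is a $f_{a,b}[-1]$ with $\col(a)=\col(b)$, so any Lie-bracket contraction $[f_{a,b},f_{c,d}]$ between such a term and a linear piece of $W^{(r)}$ (which has column gap $2(r-1)$) again has column gap $2(r-1)$; and a $\kappa$-pairing simply removes a pair of factors without changing the column gap of what remains. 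The $(-1)$-product with $W^{(1)}$ therefore never raises the column gap of the leading linear part, and your induction cannot start. The loose appeal to ``Wick contractions'' does not supply the missing operator: there is nothing in $W^{(1)}$ with positive column gap to feed into a bracket.

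The paper's proof takes the opposite route and uses only non-negative modes. The key observation is that $W^{(2)}_{i,i}$ decomposes as $Z_{i,i}+V_{i,i}$ where $Z_{i,i}$ is the linear piece of column gap $2$; acting by $(Z_{i,i})_{(0)}$ is essentially bracketing with a column-gap-$2$ element and hence raises the column gap by $2$. Iterating gives, for $i\neq j$,
\[
((W^{(2)}_{j,j})_{(0)})^{s}W^{(1)}_{i,j}=\sum_{\substack{\row(a)=j,\ \row(b)=i\\ \col(a)=\col(b)+2s}}(-1)^{\gamma_{a,b}}f_{a,b}[-1]+\text{higher terms},
\]
which produces all off-diagonal strong generators and, after one more $(0)$-bracket with $W^{(1)}_{i,j}$, the diagonal differences. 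The individual diagonal generators are then isolated by a single application of the $(1)$-product: computing $(W^{(2)}_{i,i})_{(1)}(W^{(1)}_{i,j})_{(0)}((W^{(2)}_{i,i})_{(0)})^{s}W^{(1)}_{j,i}$ and tracking the contributions of $Z_{i,i}$ versus $V_{i,i}$ yields the diagonal column-gap-$2s$ term with coefficient $\alpha$, which is where the hypothesis $\alpha\neq 0$ is actually used. In short, the paper climbs in column gap via iterated $(0)$-products with $W^{(2)}$, not via $(-1)$-products with $W^{(1)}$; your proposed engine is pointed at the wrong mode.
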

The proof of Theorem~\ref{gener} is given in the appendix. We prepare one lemma in order to prove the main theorem.
\begin{Lemma}\label{Lemma}
\textup{(1)}\ The following relations hold;
\begin{align*}
(W^{(1)}_{i,j})_{(0)}W^{(2)}_{v,w}
&=\delta_{i,w}W^{(2)}_{v,j}-\delta_{j,v}W^{(2)}_{i,w}+{(-1)}^{\widehat{i}+\widehat{j}}\delta_{i,-v}W^{(2)}_{-w,j}-{(-1)}^{\widehat{i}+\widehat{j}}\delta_{j,-w}W^{(2)}_{i,-v},\\
(W^{(1)}_{v,w})_{(1)}W^{(2)}_{i,j}
&=\dfrac{l-1}{2}\alpha(\delta_{j,v}W^{(1)}_{i,w}+\delta_{i,w}W^{(1)}_{v,j}+\delta_{-w,j}W^{(1)}_{i,-v}+\delta_{-v,i}W^{(1)}_{-w,j})\\
&\quad+\dfrac{1}{2}{(-1)}^{p(j)+p(i)}\delta_{v,-i}{W}^{(1)}_{-j,w}-\dfrac{1}{2}{(-1)}^{p(j)+p(v)}\delta_{w,i}{W}^{(1)}_{-j,-v}\\
&\quad-\dfrac{1}{2}\delta_{v,j}{W}^{(1)}_{i,w}+\dfrac{1}{2}{(-1)}^{p(v)+p(w)}\delta_{-w,j}{W}^{(1)}_{i,-v},\\
(W^{(1)}_{v,w})_{(s)}W^{(2)}_{i,j}&=0\ (s\geq2).
\end{align*}
\textup{(2)}\ We define a grading on $V^\kappa(\mathfrak{b})$ by setting $\text{deg}(x[-s])=j$ if $x\in\mathfrak{b}\cap\mathfrak{g}_{j}$. Then, we obtain
\begin{align*}
(W^{(2)}_{i,i})_{(1)}W^{(2)}_{j,j}
&=(1+\alpha\delta_{i,j}-\alpha{(-1)}^{\widehat{i}+\widehat{j}}\delta_{i,-j})(W^{(2)}_{i,i}+W^{(2)}_{j,j})+\text{higher terms}.
\end{align*}
\end{Lemma}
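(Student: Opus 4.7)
The plan is to compute each product directly from the explicit expressions for $W^{(1)}_{i,j}$ and $W^{(2)}_{i,j}$ given in Theorem~\ref{GEN}. Every summand of $W^{(r)}_{i,j}$ is either a single field $f_{a,b}[-s]$ or a normally ordered product of two such fields, so each $(n)$-product reduces to a finite sum of contractions governed by the basic OPE \eqref{OPE1} together with the non-commutative Wick formula for handling the quadratic summand of $W^{(2)}_{i,j}$.

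For part (1), conformal-weight considerations immediately rule out higher-pole contributions: $W^{(1)}_{v,w}$ has conformal weight $1$ and $W^{(2)}_{i,j}$ has conformal weight $2$, so $(W^{(1)}_{v,w})_{(s)}W^{(2)}_{i,j}$ has weight $2-s$; this vanishes automatically for $s\geq 3$, and for $s=2$ the result is a scalar multiple of the vacuum whose coefficient is checked to be zero by collecting the double-pole terms from all contractions. The $(0)$-product implements the adjoint action on the row indices: the single-pole contribution of \eqref{OPE1} produces $[f_{a,b},f_{c,d}]$, and summing over the indices defining $W^{(1)}_{v,w}$ collapses the result to the four Kronecker-delta terms of the first identity, with the $\delta_{i,-v}$, $\delta_{j,-w}$ terms coming from the antisymmetry $f_{a,b}=-f_{-b,-a}$ in $\mathfrak{so}(nl)$. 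The $(1)$-product retrieves the $\kappa$-contractions: the coefficient $\tfrac{l-1}{2}\alpha$ comes from contracting the single-field $W^{(1)}_{v,w}$ with one of the two factors in the quadratic summand of $W^{(2)}_{i,j}$ and summing over the remaining column $q$ subject to $\col(a_1)<\col(a_2)$, while the non-$\alpha$ terms $\tfrac12 (-1)^{p(j)+p(i)}\delta_{v,-i} W^{(1)}_{-j,w}$ etc.\ come from the second, trace-type summand of $\kappa$ supported on the diagonal fields $f_{a,a}$.

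For part (2), write $W^{(2)}_{i,i}=A_1+A_2+A_3+A_4$ in the order the four summands appear in Theorem~\ref{GEN}, and likewise $W^{(2)}_{j,j}=B_1+B_2+B_3+B_4$. Expand $(W^{(2)}_{i,i})_{(1)}W^{(2)}_{j,j}$ bilinearly into sixteen pieces $(A_s)_{(1)}B_t$ and keep only terms of degree zero in the $\mathfrak{b}$-grading. The $W^{(2)}$-type output arises from the $\alpha$-contractions of the $f_{a,b}[-2]$-summands ($A_1$, $A_4$ or $B_1$, $B_4$) against the quadratic summand ($B_3$ or $A_3$), which pick out one factor of the normally ordered product and leave behind a single $f_{c,d}[-2]$-field whose row and column indices reorganize into $W^{(2)}_{i,i}+W^{(2)}_{j,j}$; the coefficients $\alpha\delta_{i,j}$ and $-\alpha(-1)^{\widehat{i}+\widehat{j}}\delta_{i,-j}$ come from the two terms of the $\alpha$-part of $\kappa$. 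The constant $1$ in front of $W^{(2)}_{i,i}+W^{(2)}_{j,j}$ arises from the single internal contraction between the quadratic summands $A_3$ and $B_3$. All remaining contractions, including those among $A_2$ and $B_2$, produce only summands lying in $\mathfrak{g}_{-2}$ or lower, hence belong to the ``higher terms''.

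The main obstacle throughout is sign bookkeeping. The prefactors $(-1)^{\widehat{p}\cdot(\widehat{j}+\widehat{i})}$ in the definitions of $W^{(r)}_{i,j}$, combined with the parities $\widehat{p},\widehat{p\pm 2}$ that jump at $p=0$, must be tracked after each contraction; the reflection contributions with $\delta_{i,-v}$, $\delta_{j,-w}$, and $\delta_{i,-j}$ arising from the $-\delta_{a_1+a_2,0}\delta_{b_1+b_2,0}$ summand of $\kappa$ are the most delicate, since they couple the two halves $p>0$ and $p<0$ of $I_l$ and produce the characteristic $(-1)^{\widehat{i}+\widehat{j}}$ signs of the final identities.
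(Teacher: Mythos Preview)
Your approach matches the paper's: the paper states only ``The proof is due to a direct computation. We omit it,'' and you have supplied exactly such a direct computation via the OPE \eqref{OPE1} and the Wick formula, with the conformal-weight argument for part~(1) and the bilinear expansion for part~(2). One small remark on part~(2): in the paper's convention (cf.\ the appendix, where $Z_{i,i}$ is the degree~$-2$ piece of $W^{(2)}_{i,i}$ and ``higher terms'' are those of strictly larger degree), the leading contribution to track is the degree~$-2$ part $(1+\alpha\delta_{i,j}-\alpha(-1)^{\widehat{i}+\widehat{j}}\delta_{i,-j})(Z_{i,i}+Z_{j,j})$, not the degree-$0$ part; your phrase ``keep only terms of degree zero'' should be adjusted accordingly, but the mechanism you describe (contracting the $f[-2]$-summands against the quadratic summand, plus the single internal contraction of $A_3$ with $B_3$) is correct.
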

The proof is due to a direct computation. We omit it.
\section{Guay's affine Yangians}

In this section, we recall the definition of the Guay's affine Yangian and its coproduct.
\begin{Definition}\label{def-Yang}
Suppose that $n\geq3$ and set two $n\times n$-matrices $(a_{i,j})_{i,j\in I_n}$ and $(m_{i,j})_{i,j\in I_n}$ as
\begin{gather*}
a_{ij} =
	\begin{cases}
	2  &\text{if } i=j, \\
	-1 &\text{if } i=j \pm 2, \\
	        -1 &\text{if }(i,j)=(-n+1,n-1),(n-1,-n+1),\\
		0  &\text{otherwise,}
	\end{cases}\ 
	 m_{i,j}=
	\begin{cases}
	1&\text{if } i=j - 2,\\
		-1 &\text{if } i=j + 2,\\
	        1 &\text{if }(i,j)=(n-1,-n+1),\\
		-1 &\text{if }(i,j)=(-n+1,n-1),\\
		0  &\text{otherwise}.
	\end{cases}
\end{gather*}

The Guay's affine Yangian $Y_{\ve_1,\ve_2}(\widehat{\mathfrak{sl}}(n))$ is the associative algebra over $\mathbb{C}$ generated by $x_{i,r}^{+}, x_{i,r}^{-}$, $h_{i,r}$ $(i \in I_n, r \in \mathbb{Z}_{\geq 0})$ with parameters $\ve_1, \ve_2 \in \mathbb{C}$ subject to the following defining relations;
\begin{gather}
	[h_{i,r}, h_{j,s}] = 0, \label{eq1.1}\\
	[x_{i,r}^{+}, x_{j,s}^{-}] = \delta_{ij} h_{i, r+s}, \label{eq1.2}\\
	[h_{i,0}, x_{j,r}^{\pm}] = \pm a_{ij} x_{j,r}^{\pm},\label{eq1.3}\\
	[h_{i, r+1}, x_{j, s}^{\pm}] - [h_{i, r}, x_{j, s+1}^{\pm}] 
	= \pm a_{ij} \dfrac{\varepsilon_1 + \varepsilon_2}{2} \{h_{i, r}, x_{j, s}^{\pm}\} 
	- m_{ij} \dfrac{\varepsilon_1 - \varepsilon_2}{2} [h_{i, r}, x_{j, s}^{\pm}],\label{eq1.4}\\
	[x_{i, r+1}^{\pm}, x_{j, s}^{\pm}] - [x_{i, r}^{\pm}, x_{j, s+1}^{\pm}] 
	= \pm a_{ij}\dfrac{\varepsilon_1 + \varepsilon_2}{2} \{x_{i, r}^{\pm}, x_{j, s}^{\pm}\} 
	- m_{ij} \dfrac{\varepsilon_1 - \varepsilon_2}{2} [x_{i, r}^{\pm}, x_{j, s}^{\pm}],\label{eq1.5}\\
	\sum_{w \in \mathfrak{S}_{1 -a_{i,j}}}[x_{i,r_{w(1)}}^{\pm}, [x_{i,r_{w(2)}}^{\pm}, \dots, [x_{i,r_{w(1-a_{ij})}}^{\pm}, x_{j,s}^{\pm}]\dots]] = 0\  (i \neq j),\label{eq1.6}
\end{gather}
where we define $\{x,y\}$ by $xy+yx$ and $\mathfrak{S}_{1 -a_{ij}}$ is a permutation group of $\{1,2,\cdots,1-a_{i,j}\}$.
\end{Definition}
Here after, we sometimes denote $\ve_1+\ve_2$ by $\hbar$. In the case when $r=0$, we can rewrite \eqref{eq1.4} as
\begin{align}
[\tilde{h}_{i,1},x^\pm_{j,s}]&=\pm a_{i,j}(x^{\pm}_{j,s+1}-m_{i,j}\dfrac{\ve_1-\ve_2}{2}x^{\pm}_{j,s}),\label{eq1.8}
\end{align}
where $\tilde{h}_{i,1}=h_{i,1}-\dfrac{\hbar}{2}(h_{i,0})^2$. Then, by \eqref{eq1.8} and \eqref{eq1.2},
we find that $Y_{\ve_1,\ve_2}(\widehat{\mathfrak{sl}}(n))$ is generated by $\{h_{i,r},x^\pm_{i,r}\mid i\in I_n,r=0,1\}$ inductively. 
In fact, Guay have constructed a presentation of $Y_{\ve_1,\ve_2}(\widehat{\mathfrak{sl}}(n))$ whose generators are $\{h_{i,r},x^\pm_{i,r}\mid i\in I_n,r=0,1\}$ (see \cite{Gu1}, \cite{GNW}). This presentation is called the minimalistic presentation.
\begin{Remark}
The Guay's affine Yangian $Y_{\ve_1,\ve_2}(\widehat{\mathfrak{sl}}(n))$ was first introduced by Guay (see Definition 2.3 in \cite{Gu1}). Definition~\ref{def-Yang} is different from the definition given by Guay whose parameters are $\lambda,\beta$ and generators are $\{H_{i,r},X^\pm_{i,r}\mid0\leq i\leq n-1,r\in\mathbb{Z}_{\geq0}\}$. Then, the correspondence is given by
\begin{gather*}
\lambda=\hbar,\quad\beta=\dfrac{\hbar}{2}-\dfrac{n(\ve_1-\ve_2)}{4},\\
h_{i,0}=H_{i,0},\quad x^\pm_{i,0}=X^\pm_{i,0},\quad h_{i,1}=H_{i,1}-\Big(\dfrac{\hbar}{2}+(\dfrac{i}{2}-\dfrac{n}{4})(\ve_1-\ve_2)\Big)H_{i,0}.
\end{gather*}
\end{Remark}
We fix some notations about $\widehat{\mathfrak{sl}}(n)$. 
We set $\widehat{\mathfrak{sl}}(n)=\mathfrak{sl}(n)\otimes\mathbb{C}[t^{\pm1}]\oplus\mathbb{C}c$ as a Lie algebra whose commutator relations are given by
\begin{gather*}
[x\otimes t^s,y\otimes t^v]=[x,y]\otimes t^{s+v}+s\delta_{s+v,0}\text{tr}(xy)c,\\
\text{$c$ is a central element of $\widehat{\mathfrak{sl}}(n)$},
\end{gather*}
where tr is a trace of $\mathfrak{gl}(n)$. Let $K$ be the invariant inner product on $\widehat{\mathfrak{sl}}(n)$ defined by
\begin{equation*}
K(x\otimes t^s,y\otimes t^v)=\delta_{s+v,0}\tr(xy),\quad K(x\otimes t^s,c)=K(c,c)=0.
\end{equation*}
Let $\alpha_i$ be a simple root of $\widehat{\mathfrak{sl}}(n)$. We set an inner product on $\bigoplus_{i\in I_n}\mathbb{Z}\alpha_i$ by $(\alpha_i,\alpha_j)=a_{i,j}$.
We take Chevalley generators of $\widehat{\mathfrak{sl}}(n)=\mathfrak{sl}(n)\otimes\mathbb{C}[t^{\pm1}]\oplus\mathbb{C}c$ as follows;
\begin{gather*}
h_i=\begin{cases}
e_{n-1,n-1}-e_{-n+1,-n+1}+c&\text{ if }i=n-1,\\
e_{i,i}-e_{i+2,i+2}&\text{ if }i\neq n-1,
\end{cases}\\
x^+_i=\begin{cases}
e_{n-1,-n+1}t&\text{ if }i=n-1,\\
e_{i,i+2}&\text{ if }i\neq n-1,
\end{cases}\quad x^-_i=\begin{cases}
e_{-n+1,n-1}t^{-1}&\text{ if }i=n-1,\\
e_{i+2,i}&\text{ if }i\neq n-1.
\end{cases}
\end{gather*}
By an embedding $\widehat{\mathfrak{sl}}(n)\hookrightarrow Y_{\ve_1,\ve_2}(\widehat{\mathfrak{sl}}(n))$ determined by $h_i\mapsto h_{i,0}$ and $x^\pm_i\mapsto x^\pm_{i,0}$, we regard an element of $\widehat{\mathfrak{sl}}(n)$ as that of $Y_{\ve_1,\ve_2}(\widehat{\mathfrak{sl}}(n))$.

By using a minimalistic presentation, we construct the evaluation map for the Guay's affine Yangian. We set $\widehat{\mathfrak{gl}}(n)$ as a Lie algebra $\mathfrak{gl}(n)\otimes\mathbb{C}[t^{\pm 1}]\oplus\mathbb{C}c\oplus\mathbb{C}y$ whose commutator relations are determined by
\begin{gather*}
[e_{i,j}t^s,e_{p,q}t^u]=\delta_{j,p}e_{i,q}t^{s+u}-\delta_{i,q}e_{p,j}t^{s+u}+\delta_{j,p}\delta_{i,q}s\delta_{s+u,0}c+\delta_{i,j}\delta_{p,q}s\delta_{s+u,0}y,\\
c\text{ and }y\text{ are central elements}.
\end{gather*}
By setting the degree of $U(\widehat{\mathfrak{gl}}(n))$ by $\text{deg}(e_{i,j}t^s)=s$ and $\text{deg}(c)=\text{deg}(y)=0$, we denote the standard degreewise completion of $U(\widehat{\mathfrak{gl}}(n))/<c-n\ve_1,y-1>$ by $U(\widehat{\mathfrak{gl}}(n))_{{comp}}$ in the sense of \cite{MNT}.

\begin{Proposition}[\cite{Gu1} Section~6, \cite{K1} Theorem 3.8]
For any complex number $a$, there exists an algebra homomorphism 
\begin{equation*}
\ev_a \colon Y_{\ve_1,\ve_2}(\widehat{\mathfrak{sl}}(n)) \to U(\widehat{\mathfrak{gl}}(n))_{{comp}}
\end{equation*}
uniquely determined by 
\begin{gather*}
	\ev_a(x_{i,0}^{+}) = x_{i}^{+}, \quad \ev_a(x_{i,0}^{-}) = x_{i}^{-},\quad \ev_a(h_{i,0}) = h_{i},\\
	\ev_{a}(h_{i,1}) = \begin{cases}
		(a -n \varepsilon_1) h_{n-1} - \hbar e_{n-1,n-1} (e_{-n+1,-n+1}-c) \\
		\quad + \hbar \displaystyle\sum_{s \geq 0} \sum_{a\in I_n} \Big( e_{n-1,a}t^{-s} e_{a,n-1}t^s - e_{-n+1,a}t^{-s-1} e_{a,-n+1}t^{s+1} \Big) \text{ if $i = n-1$},\\
\\
		(a - i \varepsilon_1) h_{i} - \hbar e_{i,i}e_{i+2,i+2} \\
		\quad + \hbar \displaystyle\sum_{s \geq 0} \Big( \sum_{a\leq i} e_{i,a}t^{-s} e_{a,i}t^s + \displaystyle\sum_{a\geq i+2} e_{i,a}t^{-s-1} e_{a,i}t^{s+1} \\
		\qquad\qquad\quad - \displaystyle\sum_{a\leq i} e_{i+2,a}t^{-s} e_{a,i+2}t^s - \displaystyle\sum_{a\geq i+2} e_{i+2,a}t^{-s-1} e_{a,i+2}t^{s+1} \Big) \\
		\qquad\qquad\qquad\qquad\qquad\qquad\qquad\qquad\qquad\qquad\qquad\qquad\qquad\qquad \text{ if $i \neq n-1$}.
	\end{cases}
\end{gather*}
\end{Proposition}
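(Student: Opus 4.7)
The plan is to verify that the proposed images satisfy the defining relations of $Y_{\ve_1,\ve_2}(\widehat{\mathfrak{sl}}(n))$ in the minimalistic presentation mentioned just before the Proposition, so only a finite set of relations requires checking. First I would establish well-definedness in the completion: the infinite sums appearing in the formula for $\ev_a(h_{i,1})$ are homogeneous of degree $0$ in the $t$-grading on $U(\widehat{\mathfrak{gl}}(n))$, and after imposing $c = n\ve_1$ and $y = 1$ they converge as elements of the standard degreewise completion $U(\widehat{\mathfrak{gl}}(n))_{\text{comp}}$.

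Next I would check the relations in blocks. The $\widehat{\mathfrak{sl}}(n)$-relations among $h_{i,0}$ and $x^\pm_{i,0}$ (i.e.\ the $r=s=0$ cases of \eqref{eq1.1}--\eqref{eq1.6}) are automatic because $\ev_a$ restricted to $\widehat{\mathfrak{sl}}(n)$ is the natural Chevalley embedding $\widehat{\mathfrak{sl}}(n)\hookrightarrow\widehat{\mathfrak{gl}}(n)$. The relations $[h_{i,0}, h_{j,1}] = 0$ and $[h_{i,1}, h_{j,1}] = 0$ follow because each monomial in the formula for $\ev_a(h_{j,1})$ lies in the Cartan-weight-zero part. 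The Serre-type relations \eqref{eq1.6} at $r=1$ reduce, via \eqref{eq1.8}, to the mixed bracket computation described below.

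The heart of the proof is the key commutator $[\ev_a(h_{i,1}), x^\pm_j]$, which must equal the image under $\ev_a$ of the right-hand side prescribed by \eqref{eq1.3} and \eqref{eq1.4} at $r=0$. I would commute $x^\pm_j$ through every factor $e_{p,q}t^{\pm s}$ in the quadratic sum using the $\widehat{\mathfrak{gl}}(n)$-bracket, isolate the central cocycle contributions $s\delta_{s+u,0}c$ with $c = n\ve_1$, and reindex so that the double sum telescopes. The central contributions collapse to a single quadratic expression recognizable as the deformation $\pm a_{ij}\frac{\hbar}{2}\{h_{i,0}, x^\pm_{j,0}\} \mp m_{ij}\frac{\ve_1-\ve_2}{2}[h_{i,0}, x^\pm_{j,0}]$ appearing in \eqref{eq1.4}, while the non-central part supplies $\pm a_{ij}\ev_a(x^\pm_{j,1})$; the diagonal summand $(a-i\ve_1)h_i$ (or $(a-n\ve_1)h_{n-1}$) provides the spectral-parameter shift by $a$. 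The special form in the case $i=n-1$ arises from the extra factor of $t$ attached to $x^\pm_{n-1}$, which shifts the index range of the sum.

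The main obstacle is precisely this bookkeeping: handling the infinite cross-terms, reindexing them so that the central-extension contributions reassemble into the parameters $\frac{\hbar}{2}$ and $\frac{\ve_1-\ve_2}{2}$, and tracking the distinct roles of the two central elements $c$ and $y$. Once this key bracket is verified, the remaining $r\geq 2$ relations are deduced inductively by commuting $\tilde h_{i,1}$ against $x^\pm_{j,r}$ via \eqref{eq1.8}. This is the computation carried out in \cite{Gu1} Section~6 and \cite{K1} Theorem~3.8, whose strategy I would follow.
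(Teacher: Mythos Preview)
The paper does not supply its own proof of this Proposition; it simply cites \cite{Gu1} Section~6 and \cite{K1} Theorem~3.8 and moves on. Your proposal correctly identifies the standard verification strategy (check the minimalistic-presentation relations by computing $[\ev_a(h_{i,1}),x^\pm_j]$ inside the completion and tracking the central cocycle terms), and you yourself note at the end that this is the computation in the cited references. So there is nothing to compare: your outline is consistent with the approach in those sources, and the paper itself offers no alternative argument.
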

Setting the grading on $Y_{\ve_1,\ve_2}(\widehat{\mathfrak{sl}}(n))$ determined by
\begin{equation*}
\text{deg}(h_{i,r})=0,\quad\text{deg}(x^+_{i,r})=\begin{cases}
1&\text{ if }i=n-1,\\
0&\text{ if }i\neq n-1,
\end{cases}\quad\text{deg}(x^-_{i,r})=\begin{cases}
-1&\text{ if }i=n-1,\\
0&\text{ if }i\neq n-1,
\end{cases}
\end{equation*}
we can define $\widetilde{Y}_{\ve_1,\ve_2}(\widehat{\mathfrak{sl}}(n))$ as the standard degreewise completion of $Y_{\ve_1,\ve_2}(\widehat{\mathfrak{sl}}(n))$ in the sense of \cite{MNT}. 
We also denote the standard degreewise completion of $Y_{\ve_1,\ve_2}(\widehat{\mathfrak{sl}}(n))\otimes Y_{\ve_1,\ve_2}(\widehat{\mathfrak{sl}}(n))$ by $Y_{\ve_1,\ve_2}(\widehat{\mathfrak{sl}}(n))\widehat{\otimes} Y_{\ve_1,\ve_2}(\widehat{\mathfrak{sl}}(n))$.
\begin{Theorem}[\cite{Gu1} Section~6, \cite{GNW} Theorem 4.9, \cite{KU} Theorem 7.1]\label{cop}
There exists an algebra homomorphism 
\begin{align*}
\Delta\colon Y_{\ve_1,\ve_2}(\widehat{\mathfrak{sl}}(n))\to Y_{\ve_1,\ve_2}(\widehat{\mathfrak{sl}}(n))\widehat{\otimes} Y_{\ve_1,\ve_2}(\widehat{\mathfrak{sl}}(n))
\end{align*}
determined by
\begin{gather*}
\Delta(x)=\square(x)\text{ for all }x\in\widehat{\mathfrak{sl}}(n),\\
\Delta(h_{i,1})=\square(h_{i,1})+\hbar h_i\otimes h_i-\hbar\sum_{\gamma\in\Delta^+_{\text{re}}}(\alpha_i,\gamma)x_{-\gamma}\otimes x_\gamma,
\end{gather*}
where $\square(x)=x\otimes1+1\otimes x$, $\Delta^+_{\text{re}}$ is a set of positive real root of $\widehat{\mathfrak{sl}}(n)$ and $x_\gamma$ is a root $\gamma$ element such that $K(x_\gamma,x_{-\gamma})=1$. Moreover, $\Delta$ satisfies the coassociativity.
\end{Theorem}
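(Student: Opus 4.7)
The plan is to reduce to a finite check via the minimalistic presentation recalled just above the theorem: $Y_{\ve_1,\ve_2}(\widehat{\mathfrak{sl}}(n))$ is generated by $\{h_{i,0},x^{\pm}_{i,0},h_{i,1},x^{\pm}_{i,1}\mid i\in I_n\}$, and by \eqref{eq1.8} the element $x^{\pm}_{i,1}$ is determined by $h_{i,1}$ and $x^{\pm}_{i,0}$. Hence it suffices to prescribe $\Delta$ on the $3n$ elements $\{h_{i,0},x^{\pm}_{i,0},h_{i,1}\}$, as the theorem does, and to verify that every defining relation of the minimalistic presentation is preserved.

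First I would observe that on the subalgebra $\widehat{\mathfrak{sl}}(n)\subset Y_{\ve_1,\ve_2}(\widehat{\mathfrak{sl}}(n))$ the map $\Delta$ agrees with the standard cocommutative coproduct $\square$ of $U(\widehat{\mathfrak{sl}}(n))$. Consequently every relation involving only degree-zero generators---\eqref{eq1.1}--\eqref{eq1.3} at $r=s=0$ together with the classical Serre relations inside \eqref{eq1.6}---is preserved automatically.

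The central step is to check the relations involving $h_{i,1}$. Writing $\Delta(h_{i,1})=\square(h_{i,1})+T_i$ with $T_i=\hbar\, h_i\otimes h_i-\hbar\sum_{\gamma\in\Delta^{+}_{\mathrm{re}}}(\alpha_i,\gamma)\, x_{-\gamma}\otimes x_\gamma$, the problem reduces to computing $[T_i,\square(h_{j,0})]$ and $[T_i,\square(x^{\pm}_{j,0})]$ inside $Y_{\ve_1,\ve_2}(\widehat{\mathfrak{sl}}(n))\widehat{\otimes}Y_{\ve_1,\ve_2}(\widehat{\mathfrak{sl}}(n))$. The first vanishes from the weight-space rule $[h_j,x_{\pm\gamma}]=\pm(\alpha_j,\gamma)x_{\pm\gamma}$ together with the antisymmetry $\gamma\leftrightarrow-\gamma$. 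The second produces a controlled anomaly in the cross-tensor positions which, after using the brackets $[x_\gamma,x^{\pm}_j]$ in $\widehat{\mathfrak{sl}}(n)$ and the identity $(\alpha_i,\gamma+\alpha_j)=(\alpha_i,\gamma)+a_{ij}$, is exactly the $\hbar$-quadratic correction needed so that $\Delta$ respects \eqref{eq1.4} at $r=0$; this in turn forces the value of $\Delta(x^{\pm}_{j,1})$ and shows its consistency. Convergence of the infinite sum defining $T_i$ in the degreewise completion $\widehat{\otimes}$ is automatic from the grading.

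The main obstacle is the verification of the Serre relations \eqref{eq1.6} with $r,s\in\{0,1\}$ after applying $\Delta$: once $\Delta(x^{\pm}_{j,1})$ has been extracted in the previous step, the Serre identity is a combination of nested commutators of the $x_\gamma$'s that, modulo $\hbar$, collapses to the classical Serre relation, while the $\hbar$-linear correction becomes an identity in the affine root system involving $(\alpha_i,\gamma)$ and the structure constants of $\widehat{\mathfrak{sl}}(n)$; this is the technical heart treated in \cite{Gu1,GNW}. Finally, coassociativity is checked on generators: it is immediate on $\widehat{\mathfrak{sl}}(n)$ from coassociativity of $\square$, and for $h_{i,1}$ one expands $(\Delta\otimes\id)\Delta(h_{i,1})$ and $(\id\otimes\Delta)\Delta(h_{i,1})$ and matches the three-tensor correction terms, using that the tensor $\sum_\gamma(\alpha_i,\gamma)x_{-\gamma}\otimes x_\gamma$ is $\square$-invariant by the invariance of the form $K$.
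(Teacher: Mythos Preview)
The paper does not give its own proof of this theorem: it is stated with attribution to \cite{Gu1}, \cite{GNW}, and \cite{KU} and then used as input. So there is nothing to compare your argument against in this manuscript.

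That said, your outline is broadly in line with the approach of the cited references (particularly \cite{GNW}): define $\Delta$ on the minimalistic generators, verify that the defining relations of the minimalistic presentation are preserved, and check coassociativity on generators. One point to tighten: your claim that ``the tensor $\sum_\gamma(\alpha_i,\gamma)x_{-\gamma}\otimes x_\gamma$ is $\square$-invariant by the invariance of the form $K$'' is not quite the right mechanism for coassociativity---what you actually need is that applying $\square\otimes\id$ and $\id\otimes\square$ to $T_i$ and then adding the new correction terms from $\Delta(h_{i,1})$ on each side yields the same three-tensor expression; this is a direct bookkeeping of the positive real roots rather than an invariance statement. Otherwise the sketch is a fair summary of how the cited proofs proceed.
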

We note that we can naturally extend $\Delta$ to $\widetilde{\Delta}\colon\widetilde{Y}_{\ve_1,\ve_2}(\widehat{\mathfrak{sl}}(n))\to Y_{\ve_1,\ve_2}(\widehat{\mathfrak{sl}}(n))\widehat{\otimes}Y_{\ve_1,\ve_2}(\widehat{\mathfrak{sl}}(n))$ (see \cite{KU} Section~9). Here after, we denote $(\widetilde{\Delta}\otimes\id^{\otimes l-2})\circ\cdots(\widetilde{\Delta}\otimes\id)\circ\widetilde{\Delta}$ (resp. $(\square\otimes\id^{\otimes l-2})\circ\cdots(\square\otimes\id)\circ\square$) by $\widetilde{\Delta}^l$ (resp. $\square^l$).

\section{Twisted affine Yangians and rectangular $W$-algebras of type $D$}
Let us recall the Drinfeld $J$ presentation of the finite Yangian $Y_\hbar(\mathfrak{g})$. It is the original definition of Drinfeld (\cite{D1}).
\begin{Definition}[\cite{CP}, Section~12]\label{J}
Suppose that $\mathfrak{g}$ is a Kac-Moody Lie algebra of finite type. The Yangian $Y_\hbar(\mathfrak{g})$ is the associative algbra over $\mathbb{C}$ with generators $\{x,J(x)|x\in\mathfrak{g}\}$ subject to the following defining relations:
\begin{gather*}
xy-yx = [x,y] \text{ for all } x,y\in\mathfrak{g},\\
J(ax+by)=aJ(x)+bJ(y) \text{ for all } a,b\in\mathbb{C},\\
J([x,y]) = [x, J(y)],\\
[J(x), J([y,z])] + [J(z), J([x,y])] + [J(y), J([z,x])]= \hbar^2\sum_{a,b,c\in\mathtt{A}} ([x,\xi_a],[[y,\xi_b],[z,\xi_c]]) \{ \xi_a,\xi_b,\xi_c \},\\
\begin{align*}
&[[J(x),J(y)], [z,J(w)]]+[[J(z),J(w)],[x,J(y)]]\\
&\qquad\qquad\qquad=\hbar^2\sum_{a,b,c\in\mathtt{A}}\big(([x,\xi_a],[[y,\xi_b],[[z,w],\xi_c]])
+([z,\xi_a],[[w,\xi_b],[[x,y],\xi_c]]) \big)\{ \xi_a,\xi_b,J(\xi_c)\},
\end{align*}
\end{gather*}
where $(\ ,\ )$ is a non-zero invariant bilinear form, $\{ \xi_a \}_{a \in \mathtt{A}}$ is an orthonormal basis of $\mathfrak{g}$ and $\{ \xi_a,\xi_b,\xi_c \} = \dfrac{1}{24} \sum_{\pi\in G} \xi_{\pi(a)} \xi_{\pi(b)} \xi_{\pi(c)}$, $G$ being the group of permutations of $\{ a,b,c\}$.
By Definition~\ref{J},we note that there exists an isomorphism of $\chi_\hbar\colon Y_\hbar(\mathfrak{g})\to Y_{-\hbar}(\mathfrak{g})$ determined by $x\mapsto x$ and $J(x)\mapsto J(x)$.
\end{Definition}
Belliard and Regelskis (\cite{BR1}) gave generators of twisted Yangians in the words of the Drinfeld $J$ presentation. 
\begin{Theorem}[\cite{BR1}, Theorem~5.5]\label{JJ}
Let $\big(\mathfrak{g},\mathfrak{g}^\theta\big)$ be a symmetric pair of a finite-dimensional simple complex Lie algebra $\mathfrak{g}$ of $\text{rank}(\mathfrak{g})\geq2$ with respect to the involution $\theta$, such that $\mathfrak{g}^\theta$ is the positive eigenspace of $\theta$. Let $\{X_a\}$ (resp. $\{Y_p\}$) be a basis of $\mathfrak{g}^\theta$ (resp. $\{x\in\mathfrak{g}\mid\theta(x)=-x\}$). We decompose the Cartan element of $\mathfrak{g}$ into $C_{\mathfrak{k}}+C_{\mathfrak{m}}$, where $C_{\mathfrak{k}}$ (resp. $C_{\mathfrak{m}}$) is an element of $U(\mathfrak{k})$ (resp. $\mathbb{C}[\mathfrak{m}]$).
Then, the twisted yangian $T_\hbar(\mathfrak{g},\mathfrak{g}^\theta)$ is isomorphic to the subalgebra of $Y_h(\mathfrak{g})$ generated by $\{X_a,B(Y_p)\}$, where
\begin{equation*}
B(Y_p)=J(Y_p)+\dfrac{\hbar}{4}\Big[Y_p,C_{\mathfrak{k}}\Big].
\end{equation*}
\end{Theorem}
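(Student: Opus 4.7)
The plan is to prove the isomorphism by establishing that each candidate generator lies in the twisted Yangian $T_\hbar(\mathfrak{g},\mathfrak{g}^\theta)$ and then that these generators suffice. Throughout I will use the coideal realization of $T_\hbar(\mathfrak{g},\mathfrak{g}^\theta)$ as the maximal subalgebra of $Y_\hbar(\mathfrak{g})$ whose elements satisfy the appropriate $\theta$-twisted symmetry under $\Delta$. The inclusion $X_a\in T_\hbar(\mathfrak{g},\mathfrak{g}^\theta)$ is immediate from $\Delta(X_a)=X_a\otimes 1+1\otimes X_a$ together with $X_a\in\mathfrak{g}^\theta$, so the substance of the argument is in handling the $J$-generators attached to the anti-invariant subspace $\mathfrak{m}$.

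For the first step I would compute $\Delta(B(Y_p))$ using Drinfeld's coproduct
\[
\Delta(J(x))=J(x)\otimes 1+1\otimes J(x)+\tfrac{\hbar}{2}\bigl[x\otimes 1,\,\Omega\bigr],
\]
where $\Omega=\sum_a\xi_a\otimes\xi_a$ is the split Casimir of the chosen invariant form. Decomposing $\Omega=\Omega_\mathfrak{k}+\Omega_\mathfrak{m}$ according to $\mathfrak{g}=\mathfrak{k}\oplus\mathfrak{m}$, one sees that $\tfrac{\hbar}{2}[Y_p\otimes 1,\Omega_\mathfrak{m}]$ lands in a component compatible with the coideal condition while $\tfrac{\hbar}{2}[Y_p\otimes 1,\Omega_\mathfrak{k}]$ obstructs it. The correction $\tfrac{\hbar}{4}[Y_p,C_\mathfrak{k}]$ is engineered precisely so that its coproduct cures this obstruction; expanding $\Delta([Y_p,C_\mathfrak{k}])=[\Delta(Y_p),\Delta(C_\mathfrak{k})]$ and grouping by tensor type, the mixed $(\mathfrak{m},\mathfrak{k})$ piece should match the anomalous part of $\tfrac{\hbar}{2}[Y_p\otimes 1,\Omega_\mathfrak{k}]$ after invoking $\theta$-invariance of the bilinear form together with $[\mathfrak{k},\mathfrak{m}]\subseteq\mathfrak{m}$ and $[\mathfrak{m},\mathfrak{m}]\subseteq\mathfrak{k}$.

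For the second step I would equip $Y_\hbar(\mathfrak{g})$ with the standard filtration $\deg(x)=0$ and $\deg(J(x))=1$ for $x\in\mathfrak{g}$. By the Yangian PBW theorem the associated graded is $U(\mathfrak{g}[z])$, and the associated graded of $T_\hbar(\mathfrak{g},\mathfrak{g}^\theta)$ is the enveloping algebra of the twisted current algebra $\mathfrak{g}[z]^\sigma$, where $\sigma(x\otimes z^r)=(-1)^r\theta(x)\otimes z^r$. The principal symbols of $X_a$ and $B(Y_p)$ in this associated graded are $X_a$ and $Y_p\otimes z$ respectively, and these generate $\mathfrak{k}\oplus z\mathfrak{m}\oplus z^2\mathfrak{k}\oplus z^3\mathfrak{m}\oplus\cdots$ as a Lie algebra via iterated brackets. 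A routine induction on filtration degree then lifts this generation statement to $T_\hbar(\mathfrak{g},\mathfrak{g}^\theta)$ itself, and injectivity of the natural map from the abstract twisted Yangian comes for free by comparing PBW dimensions in each filtration degree.

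The principal obstacle is the explicit verification in the first step: the Casimir $\Omega$ does not split in a manifestly clean way under the $\theta$-eigenspace decomposition, and the correction term $\tfrac{\hbar}{4}[Y_p,C_\mathfrak{k}]$ must be shown to cancel the anomalous $(\mathfrak{m},\mathfrak{k})$ contribution exactly, not merely modulo higher filtration. Choosing an orthonormal basis adapted to $\mathfrak{g}=\mathfrak{k}\oplus\mathfrak{m}$, using $\theta$-invariance of the form, and expanding $\Delta(C_\mathfrak{k})$ as $C_\mathfrak{k}\otimes 1+1\otimes C_\mathfrak{k}+2\sum_a X_a\otimes X_a$ so that the bracket $[\Delta(Y_p),\Delta(C_\mathfrak{k})]$ separates transparently into its $(\mathfrak{k},\mathfrak{m})$ and $(\mathfrak{m},\mathfrak{k})$ summands, should make the required identity a finite verification.
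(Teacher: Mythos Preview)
The paper does not supply a proof of this theorem; it is quoted verbatim as Theorem~5.5 of \cite{BR1} and used as a black box. There is therefore no proof in the present paper against which to compare your proposal.

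That said, a few remarks on your sketch. Your strategy is broadly the one in \cite{BR1}: verify the coideal condition for $B(Y_p)$ via an explicit coproduct computation, then argue generation by passing to the associated graded. One point to be careful about is your opening assumption that $T_\hbar(\mathfrak{g},\mathfrak{g}^\theta)$ can be characterized as \emph{the maximal} subalgebra of $Y_\hbar(\mathfrak{g})$ satisfying a $\theta$-twisted coideal symmetry. Twisted Yangians are typically defined either by an RTT presentation or, in the Belliard--Regelskis approach, by an explicit Drinfeld-$J$ style presentation with generators $\{X_a,B(Y_p)\}$ and a list of relations; the coideal property is then a consequence, not the definition, and maximality among coideal subalgebras is not obvious a priori. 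If you take the coideal characterization as the definition, you owe an argument that it coincides with the standard one, which is essentially the content of the theorem itself. The cleaner route, and the one actually taken in \cite{BR1}, is to start from the abstract presentation, exhibit a homomorphism into $Y_\hbar(\mathfrak{g})$ sending the generators to $X_a$ and $B(Y_p)$, and prove it is an isomorphism onto its image via the PBW/filtration argument you outline in your second step.
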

Belliard and Regelskis also gave the Drinfeld $J$ presentation of twisted Yangians whose generators are $\{X_a,B(Y_p)\}$. Its defining relations contain the relation $[X_a,B(Y_p)]=B([X_a,Y_p])$. By Theorem~\ref{JJ}, we can realize $T_\hbar(\mathfrak{g},\mathfrak{g}^\theta)$ as a subalgebra of $Y_{-\hbar}(\mathfrak{g})$ via $\chi_\hbar$.

In finite setting, Brown \cite{Bro} constructed a surjective homomorphism from the twisted Yangians to the finite rectangular $W$-algebras. Especially, in the case when $n,l$ is a he has constructed a surjective homorphism from $T_h(\mathfrak{sl}(n),\mathfrak{sp}(n))$ to the rectangular finite $W$-algebra associated with $\mathfrak{so}(nl)$ and a nilpotent element whose Jordan form corresponds to the partition $(l^n)$.
We construct the affine analogue of this homomorphism.

There exists the following symmetric pair decomposition of $\mathfrak{sl}(n)$;
\begin{gather*}
\mathfrak{sl}(n)=<e_{i,j}-{(-1)}^{\widehat{i}+\widehat{j}}e_{-j,-i}\mid i,j\in I_n>\oplus\Big(<e_{i,j}-{(-1)}^{\widehat{i}+\widehat{j}}e_{-j,-i}\mid i,j\in I_n>\cap\mathfrak{sl}(n)\Big),
\end{gather*}
where $<A_i\mid i\in B>$ is a $\mathbb{C}$-vector space spanned by $\{A_i\}_{i\in B}$.
Let $H_i$ be $e_{i,i}-e_{i+2,i+2}\in\mathfrak{sl}(n)$. We set
\begin{align*}
\mathfrak{k}&=<e_{i,j}-{(-1)}^{\widehat{i}+\widehat{j}}e_{-j,-i}\mid i,j\in I_n>,\\
\mathfrak{m}&=<e_{i,j}+{(-1)}^{\widehat{i}+\widehat{j}}e_{-j,-i}\mid i,j\in I_n>\cap\mathfrak{sl}(n)\\
&=<H_i-H_{-i-2}\mid i,j\in I_n\setminus \{n-1\}>\oplus<e_{i,j}+{(-1)}^{\widehat{i}+\widehat{j}}e_{-j,-i}\mid i,j\in I_n,i\neq j>.
\end{align*}
We note that $\mathfrak{k}$ is isomorphic to $\mathfrak{sp}(n)$. Moreover, we have the following lemma.
\begin{Lemma}\label{mmm}
Any element of $\mathfrak{m}$ can be written as $[H_i-H_{-i-2},x]$ or $[[H_i-H_{-i-2},x],y]$ for some $x,y\in\mathfrak{k}$ and $i\in I_n\setminus \{n-1\}$.
\end{Lemma}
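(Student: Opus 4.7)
The plan is to verify the claim on the two natural spanning families of $\mathfrak{m}$ displayed in its definition: the Cartan-type elements $H_i-H_{-i-2}$ with $i\in I_n\setminus\{n-1\}$, and the off-diagonal elements $\widetilde{E}_{j,k}:=e_{j,k}+(-1)^{\widehat{j}+\widehat{k}}e_{-k,-j}$ with $j,k\in I_n$, $j\neq k$ (noting that $\widetilde{E}_{j,-j}=0$, since $n$ being even forces $0\notin I_n$ and hence $\widehat{j}+\widehat{-j}=1$).

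For an off-diagonal element I would use the ansatz $x=e_{j,k}-(-1)^{\widehat{j}+\widehat{k}}e_{-k,-j}\in\mathfrak{k}$. A direct computation based on $[e_{a,a},e_{p,q}]=(\delta_{a,p}-\delta_{a,q})e_{p,q}$ together with the parity identity $\widehat{-p}+\widehat{-q}\equiv\widehat{p}+\widehat{q}\pmod 2$ for $p,q\in I_n$ shows that
\[
[H_i-H_{-i-2},x]=c(i,j,k)\,\widetilde{E}_{j,k}
\]
for an explicit integer coefficient $c(i,j,k)$ depending only on which of $\{i,i+2,-i-2,-i\}$ coincide with $\{j,k\}$. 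I would then argue by a short case analysis that some admissible $i\in I_n\setminus\{n-1\}$ achieves $c(i,j,k)\neq 0$: the default choice $i=j$ works in general, and the residual situations where $i=j$ is forbidden ($j=n-1$) or produces cancellation ($j=-1$, where $-i-2=i$) are handled by $i=k$ or by one of $j\pm 2$.

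For a Cartan-type element I plan to realize $H_i-H_{-i-2}$ as a double bracket with the specific choice
\[
x=e_{i,i+2}-(-1)^{\widehat{i}+\widehat{i+2}}e_{-i-2,-i},\qquad y=e_{i+2,i}-(-1)^{\widehat{i+2}+\widehat{i}}e_{-i,-i-2},
\]
both manifestly in $\mathfrak{k}$. A two-step bracket computation gives $[H_i-H_{-i-2},x]=2\widetilde{E}_{i,i+2}$ and then $[\widetilde{E}_{i,i+2},y]=H_i-H_{-i-2}$; the cross terms drop out because $i\neq -1$ implies $\{i,i+2\}\cap\{-i,-i-2\}=\emptyset$. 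Hence $H_i-H_{-i-2}=\tfrac12[[H_i-H_{-i-2},x],y]$, of the required double-bracket form, after absorbing the factor into $y$; the excluded case $i=-1$ is vacuous since $H_{-1}-H_{-1}=0$.

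The main obstacle is the bookkeeping in the off-diagonal case: the algebra itself is elementary, but one must carefully track which of the indices $\{i,i+2,-i,-i-2\}$ collide with $\{j,k\}$ and verify that the constraint $i\neq n-1$ never simultaneously eliminates every choice of $i$ yielding a nonzero coefficient $c(i,j,k)$.
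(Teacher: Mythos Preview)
Your proposal is correct and follows essentially the same route as the paper: for the off-diagonal spanning set you bracket $H_i-H_{-i-2}$ against the corresponding element of $\mathfrak{k}$ to obtain a nonzero multiple of $\widetilde{E}_{j,k}$, and for the Cartan-type generators you take a further bracket against the opposite root vector in $\mathfrak{k}$ to recover $H_i-H_{-i-2}$. The only cosmetic differences are that the paper fixes the $H$-index to be the first subscript of $e_{i,j}$ (tacitly using the symmetry $\widetilde{E}_{i,j}\propto\widetilde{E}_{-j,-i}$ to avoid the forbidden value $n-1$), whereas you keep the index flexible and run a short case analysis; and in the double-bracket step the paper works with an arbitrary pair $i\neq\pm j$ to produce $e_{i,i}+e_{-i,-i}-(e_{j,j}+e_{-j,-j})$, while you specialize to $j=i+2$, which already suffices.
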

\begin{proof}
By a direct computation, we obtain
\begin{align*}
&\quad[H_i-H_{-i-2},e_{i,j}-{(-1)}^{\widehat{i}+\widehat{j}}e_{-j,-i}]\\
&=(1+\delta_{j,i+2}+\delta_{i,-1}+\delta_{j,-i-2})(e_{i,j}+{(-1)}^{\widehat{i}+\widehat{j}}e_{-j,-i})
\end{align*}
for all $i\neq\pm j$. By a direct computation, we have
\begin{align*}
&\quad[e_{i,j}+{(-1)}^{\widehat{i}+\widehat{j}}e_{-j,-i},e_{j,i}-{(-1)}^{\widehat{i}+\widehat{j}}e_{-i,-j}]\\
&=e_{i,i}+e_{-i,-i}-(e_{j,j}+e_{-j,-j})
\end{align*}
for all $i\neq\pm j$.
This completes the proof.
\end{proof}
By Lemma~\ref{mmm}, we find that the twisted Yangian $T_h(\mathfrak{sl}(n),\mathfrak{sp}(n))$ is generated by $\mathfrak{k}$ and $\{B(H_i)-B(H_{-i-2})\mid i\in I_n\setminus \{n-1\}\}$. By Theorem~\ref{JJ}, we can rewrite $B(H_i-H_{-i-2})$ as
\begin{align*}
&J(H_i-H_{-i-2})+\dfrac{\hbar}{8}\Big[H_i-H_{-i-2}, \sum_{\substack{u>v}}(e_{u,v}-{(-1)}^{\widehat{u}+\widehat{v}}e_{-v,-u})(e_{v,u}-{(-1)}^{\widehat{u}+\widehat{v}}e_{-u,-v})\Big]
\end{align*}
for all $i\in I_n\setminus \{n-1\}$. 

In a similar way to Theorem~\ref{JJ}, we define the twisted affine Yangian of type $C$. We have a decomposition $\widehat{\mathfrak{sl}}(n)=\widehat{\mathfrak{k}}\oplus\mathfrak{m}\otimes\mathbb{C}[t^{\pm1}]$. 
Similarly to the finite case, we note that $\widehat{\mathfrak{k}}$ is isomorphic to $\widehat{\mathfrak{sp}}(n)$ and $\mathfrak{m}\otimes\mathbb{C}[t^{\pm1}]=[h_i-h_{-i-2},\widehat{\mathfrak{k}}]+[[h_i-h_{-i-2},\widehat{\mathfrak{k}}],\widehat{\mathfrak{k}}]$.

By the similar formula in Section~3 of \cite{GNW}, we can define $J(h_i)$ as an element of $\widetilde{Y}_{\ve_1,\ve_2}(\widehat{\mathfrak{sl}}(n))$;
\begin{gather*}
J(h_i)=h_{i,1}+\dfrac{\hbar}{2}\sum_{\gamma\in\Delta^+_{\text{re}}}(\alpha_i,\gamma)x_{-\gamma}x_\gamma-\dfrac{\hbar}{2}h_i^2,
\end{gather*}
where $\Delta^+_{\text{re}}$ is a set of positive real root of $\widehat{\mathfrak{sl}}(n)$ and $x_\gamma$ is a root $\gamma$ element such that $(x_\gamma,x_{-\gamma})=1$. By the definition of $J(h_i)$ and Theorem~\ref{cop}, we obtain
\begin{align}
\widetilde{\Delta}(J(h_i))&=\square(J(h_i))+\dfrac{\hbar}{2}\sum_{\gamma\in\Delta^+_{\text{re}}}(\alpha_i,\gamma)(x_{\gamma}\otimes x_{-\gamma}-x_{-\gamma}\otimes x_\gamma)\nonumber\\
&=\square(J(h_i))+\dfrac{\hbar}{2}\sum_{\gamma\in\Delta_{\text{re}}}[h_i,x_{\gamma}]\otimes x_{-\gamma},\label{align6598}
\end{align}
where $\Delta_{\text{re}}$ is a set of real roots of $\widehat{\mathfrak{sl}}(n)$.
\begin{Definition}
For all $i\in I_n\setminus \{n-1\}$, let us set $B(h_i-h_{-i-2})$ as
\begin{align}
&J(h_i-h_{-i-2})+\dfrac{\hbar}{8}[\sum_{\substack{u<v\\m\geq1}}(e_{u,v}-{(-1)}^{\widehat{u}+\widehat{v}}e_{-v,-u})t^{-m}(e_{v,u}-{(-1)}^{\widehat{u}+\widehat{v}}e_{-u,-v})t^m,h_i-h_{-i-2}]\nonumber\\
&\qquad\qquad\qquad\qquad+\dfrac{\hbar}{8}[\sum_{\substack{u>v\\m\geq0}}(e_{u,v}-{(-1)}^{\widehat{u}+\widehat{v}}e_{-v,-u})t^{-m}(e_{v,u}-{(-1)}^{\widehat{u}+\widehat{v}}e_{-u,-v})t^m,h_i-h_{-i-2}].\label{JJJ2}
\end{align}
We define $TY_{\ve_1,\ve_2}(\widehat{\mathfrak{sp}}(n))$ as a subalgebra of $\widetilde{Y}_{\ve_1,\ve_2}(\widehat{\mathfrak{sl}}(n))$ topologically generated by $\widehat{\mathfrak{k}}$ and $\{B(h_i-h_{-i-2})\mid i\in I_n\setminus \{n-1\}\}$.
\end{Definition}
By the definition of $TY_{\ve_1,\ve_2}(\widehat{\mathfrak{sp}}(n))$, the universal enveloping algebra of $\widehat{\mathfrak{k}}$ can be embedded into $TY_{\ve_1,\ve_2}(\widehat{\mathfrak{sp}}(n))$.
\begin{Proposition}\label{Prop}
The restriction of $\widetilde{\Delta}\colon\widetilde{Y}_{\ve_1,\ve_2}(\widehat{\mathfrak{sl}}(n))\to Y_{\ve_1,\ve_2}(\widehat{\mathfrak{sl}}(n))\widehat{\otimes}Y_{\ve_1,\ve_2}(\widehat{\mathfrak{sl}}(n))$ gives a coideal structure to $TY_{\ve_1,\ve_2}(\widehat{\mathfrak{so}}(2n))$. That is, we have
\begin{align*}
\widetilde{\Delta}(TY_{\ve_1,\ve_2}(\widehat{\mathfrak{sp}}(n)))\subset TY_{\ve_1,\ve_2}(\widehat{\mathfrak{sp}}(n))\widehat{\otimes}Y_{\ve_1,\ve_2}(\widehat{\mathfrak{sl}}(n)),
\end{align*}
where the completed tensor product $Y_{\ve_1,\ve_2}(\widehat{\mathfrak{sl}}(n))\widehat{\otimes}TY_{\ve_1,\ve_2}(\widehat{\mathfrak{sp}}(n))$ is defined in the same way as $Y_{\ve_1,\ve_2}(\widehat{\mathfrak{sl}}(n))\widehat{\otimes}Y_{\ve_1,\ve_2}(\widehat{\mathfrak{sl}}(n))$.
\end{Proposition}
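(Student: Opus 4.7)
The plan is to verify the coideal condition on a topological generating set. Since $\widetilde{\Delta}$ is an algebra homomorphism and $TY_{\ve_1,\ve_2}(\widehat{\mathfrak{sp}}(n))$ is topologically generated by $\widehat{\mathfrak{k}}$ together with $\{B(h_i-h_{-i-2})\mid i\in I_n\setminus\{n-1\}\}$, it suffices to show that the image of each of these generators lies in $TY_{\ve_1,\ve_2}(\widehat{\mathfrak{sp}}(n))\widehat{\otimes}Y_{\ve_1,\ve_2}(\widehat{\mathfrak{sl}}(n))$. For $x\in\widehat{\mathfrak{k}}$ this is immediate: $\widetilde{\Delta}(x)=x\otimes 1+1\otimes x$ and $\widehat{\mathfrak{k}}\subset TY_{\ve_1,\ve_2}(\widehat{\mathfrak{sp}}(n))$.

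The main task is to compute $\widetilde{\Delta}(B(h_i-h_{-i-2}))$. Split $B(h_i-h_{-i-2})$ into $J(h_i-h_{-i-2})$ plus the quadratic correction term displayed in \eqref{JJJ2}, and apply $\widetilde{\Delta}$ to each part. The coproduct of $J(h_i-h_{-i-2})$ is given directly by \eqref{align6598}; for the correction we use the fact that $\widetilde{\Delta}$ restricted to $U(\widehat{\mathfrak{sl}}(n))\subset Y_{\ve_1,\ve_2}(\widehat{\mathfrak{sl}}(n))$ coincides with the primitive coproduct, so that
\begin{equation*}
\widetilde{\Delta}(uv)=uv\otimes 1+u\otimes v+v\otimes u+1\otimes uv\quad\text{for }u,v\in\widehat{\mathfrak{sl}}(n).
\end{equation*}
Collecting the outputs, $\widetilde{\Delta}(B(h_i-h_{-i-2}))$ equals $\square(B(h_i-h_{-i-2}))$ plus the sum $\dfrac{\hbar}{2}\sum_{\gamma\in\Delta_{\text{re}}}[h_i-h_{-i-2},x_\gamma]\otimes x_{-\gamma}$ arising from $J$, plus ``cross terms'' of the form $k\otimes m+m\otimes k$ arising from expanding the products in the correction.

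The core of the proof is to show that the remaining contributions (everything beyond $\square(B(h_i-h_{-i-2}))$) have first tensor factor in $TY_{\ve_1,\ve_2}(\widehat{\mathfrak{sp}}(n))$. Decompose each real root vector $x_\gamma=x_\gamma^{\widehat{\mathfrak{k}}}+x_\gamma^{\mathfrak{m}}$ according to $\widehat{\mathfrak{sl}}(n)=\widehat{\mathfrak{k}}\oplus\mathfrak{m}\otimes\mathbb{C}[t^{\pm1}]$. Since $\ad(h_i-h_{-i-2})$ interchanges $\widehat{\mathfrak{k}}$ and $\mathfrak{m}\otimes\mathbb{C}[t^{\pm1}]$, the summands whose first tensor factor lies in $\widehat{\mathfrak{k}}$ are automatically acceptable. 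The summands whose first factor lies in $\mathfrak{m}\otimes\mathbb{C}[t^{\pm1}]$ must cancel against the $m\otimes k$-part of the cross terms coming from the correction. This cancellation is precisely what the coefficient $\dfrac{\hbar}{8}$ and the index ranges $u<v,m\geq 1$ and $u>v,m\geq 0$ in \eqref{JJJ2} are engineered to enforce, by the affine analogue of Theorem~\ref{JJ}.

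The main obstacle is the combinatorial bookkeeping: matching the coefficients, signs, and index ranges between the $\mathfrak{m}$-parts of $\dfrac{\hbar}{2}\sum_\gamma[h_i-h_{-i-2},x_\gamma^{\widehat{\mathfrak{k}}}]\otimes x_{-\gamma}$ and the cross terms $m\otimes k$ from the correction. Once each positive real root of $\widehat{\mathfrak{sl}}(n)$ is paired with its corresponding summand in \eqref{JJJ2} via the splitting of matrix units $e_{a,b}t^{\pm m}$ into $\mathfrak{k}$- and $\mathfrak{m}$-parts, the cancellation follows by a direct calculation, completing the proof.
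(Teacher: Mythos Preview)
Your outline is correct and is essentially the paper's proof viewed through a $\mathfrak{k}/\mathfrak{m}$ lens. Both arguments reduce to the generators, note that $\widehat{\mathfrak{k}}$-elements are primitive, and then analyse $\widetilde{\Delta}(B(h_i-h_{-i-2}))$; the paper writes this as $\square(B(h_i-h_{-i-2}))+\sum_{j}\pm C_j+\sum_j\pm D_j$ where the $C_j$ come from \eqref{align6598} and the $D_j$ are the cross terms from the correction, rewrites $D_i+D_{-i}=F_i+F_{-i}$, and then checks directly that $C_{i,1}+F_{i,2}$ and $C_{i,2}+F_{i,1}$ have first tensor factor of the explicit form $(e_{u,i}-(-1)^{\widehat u+\widehat i}e_{-i,-u})t^{-s}\in\widehat{\mathfrak{k}}$. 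Your statement that the $\mathfrak{m}$-parts of the first factor cancel is exactly equivalent to this (first factor in $\widehat{\mathfrak{k}}$ $\Leftrightarrow$ its $\mathfrak{m}$-projection vanishes), and the ``direct calculation'' you defer to at the end is precisely that matrix-unit computation.

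One remark on your framing: you write the bad $J$-summands as $[h_i-h_{-i-2},x_\gamma^{\widehat{\mathfrak{k}}}]\otimes x_{-\gamma}$, with second factor a full root vector, while the correction cross terms you identify are of shape $m\otimes k$. These do match, but only because $\sum_\gamma x_\gamma\otimes x_{-\gamma}$ is $(\theta\otimes\theta)$-invariant, so its $\mathfrak{k}\otimes\mathfrak{m}$-component vanishes and the second factor in your bad $J$-summands is automatically in $\widehat{\mathfrak{k}}$. The paper sidesteps this point by never decomposing and simply combining the explicit $C$- and $F$-terms; if you keep your conceptual decomposition it is worth making that invariance explicit.
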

\begin{proof}
It is enough to show that $\widetilde{\Delta}(B(h_i-h_{-i-2}))\subset TY_{\ve_1,\ve_2}(\widehat{\mathfrak{sp}}(n))\widehat{\otimes}Y_{\ve_1,\ve_2}(\widehat{\mathfrak{sl}}(n))$. 
By the definition of $B(h_i-h_{-i-2})$ and \eqref{align6598}, we find that 
\begin{equation*}
\widetilde{\Delta}(B(h_i-h_{-i-2}))=\square(B(h_i-h_{-i-2}))+C_i+C_{-i}-C_{i+2}-C_{-i-2}+D_i+D_{-i}-D_{i+2}-D_{-i-2},
\end{equation*}
where
\begin{align*}
C_i&=\dfrac{\hbar}{2}\sum_{\gamma\in\Delta_{\text{re}}}[e_{i,i},x_{\gamma}]\otimes x_{-\gamma},\\
D_i&=\dfrac{\hbar}{8}[\sum_{\substack{u\neq v\\m\in\mathbb{Z}}}(e_{u,v}-{(-1)}^{\widehat{u}+\widehat{v}}e_{-v,-u})t^{-m}\otimes(e_{v,u}-{(-1)}^{\widehat{u}+\widehat{v}}e_{-u,-v})t^m,\square e_{i,i}].
\end{align*}
By a direct computation, we obtain
\begin{align}
C_i&=-\dfrac{\hbar}{2}\sum_{\substack{u\neq i\\s\in\mathbb{Z}}}e_{u,i}t^{-s}\otimes e_{i,u}t^{s}+\dfrac{\hbar}{2}\sum_{\substack{u\neq i\\s\in\mathbb{Z}}}e_{i,u}t^{s+1}\otimes e_{u,i}t^{-s-1}.\label{align9}
\end{align}
By a direct computation, we also obtain
\begin{align}
D_i&=-\dfrac{\hbar}{4}\sum_{\substack{v\neq i\\s\in\mathbb{Z}}}{(-1)}^{\widehat{v}+\widehat{i}}e_{-v,-i}t^{-s}\otimes e_{v,i}t^s-\dfrac{\hbar}{4}\sum_{\substack{v\neq -i\\s\in\mathbb{Z}}}{(-1)}^{\widehat{v}+\widehat{-i}}e_{-v,i}t^{-s}\otimes e_{v,-i}t^s\nonumber\\
&\quad+\dfrac{\hbar}{4}\sum_{\substack{u\neq i\\s\in\mathbb{Z}}}{(-1)}^{\widehat{u}+\widehat{i}}e_{-i,-u}t^{-s}\otimes e_{i,u}t^s+\dfrac{\hbar}{4}\sum_{\substack{u\neq -i\\s\in\mathbb{Z}}}{(-1)}^{\widehat{u}+\widehat{-i}}e_{i,-u}t^{-s}\otimes e_{-i,u}t^s.
\end{align}
By setting
\begin{align}
F_i&=-\dfrac{\hbar}{2}\sum_{\substack{v\neq i\\s\in\mathbb{Z}}}{(-1)}^{\widehat{v}+\widehat{i}}e_{-v,-i}t^{-s}\otimes e_{v,i}t^s+\dfrac{\hbar}{2}\sum_{\substack{u\neq i\\s\in\mathbb{Z}}}{(-1)}^{\widehat{u}+\widehat{i}}e_{-i,-u}t^{-s}\otimes e_{i,u}t^s,\label{align15}
\end{align}
we obtain $D_i+D_{-i}=F_i+F_{-i}$ by a direct computation. We denote the $a$-th term of the right hand side of \eqref{align9} (resp. \eqref{align15}) by $C_{i,a}$ (resp. $F_{i,a}$). Then, we find that
\begin{gather*}
C_{i,1}+F_{i,2}=-\dfrac{\hbar}{2}\sum_{\substack{u\neq i\\s\in\mathbb{Z}}}(e_{u,i}t^{-s}-{(-1)}^{\widehat{u}+\widehat{i}}e_{-i,-u}t^{-s})\otimes e_{i,u}t^{s},\\
C_{i,2}+F_{i,1}=\dfrac{\hbar}{2}\sum_{\substack{u\neq i\\s\in\mathbb{Z}}}(e_{i,u}t^{s}-{(-1)}^{\widehat{u}+\widehat{i}}e_{-u,-i}t^s)\otimes e_{u,i}t^{-s}.
\end{gather*}
Since $C_{i,1}+F_{i,2}$ and $C_{i,2}+F_{i,1}$ are contained in $TY_{\ve_1,\ve_2}(\widehat{\mathfrak{sp}}(n))\widehat{\otimes}Y_{\ve_1,\ve_2}(\widehat{\mathfrak{sl}}(n))$, we find that
\begin{equation*}
\widetilde{\Delta}(B(h_i-h_{-i-2}))=\square(B(h_i-h_{-i-2}))+C_i+C_{-i}-C_{i+2}-C_{-i-2}+F_i+F_{-i}-F_{i+2}-F_{-i-2}
\end{equation*}
is contained in $TY_{\ve_1,\ve_2}(\widehat{\mathfrak{sp}}(n))\widehat{\otimes}Y_{\ve_1,\ve_2}(\widehat{\mathfrak{sl}}(n))$. 
\end{proof}
Next, we construct a homomorphsim from the twisted affine Yangian to the universal enveloping algebra of the rectangular $W$-algebra of type $D$.
We recall the definition of the universal enveloping algebra of a vertex algebra.
For a vertex algebra $V$, let $L(V)$ be the Borchards Lie algebra, that is,
\begin{align}
 L(V)=V{\otimes}\mathbb{C}[t,t^{-1}]/\text{Im}(\partial\otimes\id +\id\otimes\frac{d}{d t})\label{844},
\end{align}
where the commutation relation is given by
\begin{align*}
 [ut^a,vt^b]=\sum_{r\geq 0}\begin{pmatrix} a\\r\end{pmatrix}(u_{(r)}v)t^{a+b-r}
\end{align*}
for all $u,v\in V$ and $a,b\in \mathbb{Z}$. 
\begin{Definition}[Frenkel-Zhu~\cite{FZ}, Matsuo-Nagatomo-Tsuchiya~\cite{MNT}]\label{Defi}
We set $\mathcal{U}(V)$ as the quotient algebra of the standard degreewise completion of the universal enveloping algebra of $L(V)$ by the completion of the two-sided ideal generated by
\begin{gather}
(u_{(a)}v)t^b-\sum_{i\geq 0}
\begin{pmatrix}
 a\\i
\end{pmatrix}
(-1)^i(ut^{a-i}vt^{b+i}-{(-1)}^{p(u)p(v)}(-1)^avt^{a+b-i}ut^{i}),\label{241}\\
|0\rangle t^{-1}-1.
\end{gather}
We call $\mathcal{U}(V)$ the universal enveloping algebra of $V$.
\end{Definition}
Let $l'$ be $\dfrac{l}{2}$. There exists an isomorphism $\Psi\colon\mathfrak{g}_0\to\mathfrak{gl}_n^{\otimes l'}=\mathfrak{L}$ determined by
\begin{gather*}
\Psi(f_{a,b})=e_{\row(a),\row(b)}[-1]\text{ if }\col(a)=\col(b)>0.
\end{gather*}
We denote $1^{\otimes r-1}\otimes e_{i,j}\otimes1^{\otimes l'-r}$ by $e_{i,j}^{(r)}$. The projection $\mathfrak{so}(nl)\to\mathfrak{g}_0$ induces the Miura transformation (\cite{KW})
\begin{align*}
\mu_D\colon \mathcal{W}^k(\mathfrak{so}(nl),(l^n))\to V^\Gamma(\mathfrak{L}),
\end{align*}
where
\begin{align*}
\Gamma(e_{a,b}^{(r_1)},e_{c,d}^{(r_2)})=\delta_{a,d}\delta_{b,c}\delta_{r_1,r_2}\alpha+\delta_{a,b}\delta_{c,d}\delta_{r_1,r_2}.
\end{align*}
The Miura transformation is injective (see \cite{F}, \cite{A3}). The Miura transformation also induces the homomorphism
\begin{equation*}
\widetilde{\mu}_D\colon\mathcal{U}(\mathcal{W}^k(\mathfrak{so}(nl),(l^n)))\to U(\widehat{\mathfrak{gl}}(n))^{\otimes l}_{\text{comp}},
\end{equation*}
where $U(\widehat{\mathfrak{gl}}(n))^{\otimes l}_{\text{comp}}$ is the standard degreewise completion of $U(\widehat{\mathfrak{gl}}(n))^{\otimes l}$ in the sense of \cite{MNT}. By using the PBW theorem of the universal enveloping algebra of the vertex algebra (\cite{A} Section 3.14), we note that $\widetilde{\mu}_D$ is injective.
By the definition of $\mu_D$, we have
\begin{align*}
\widetilde{\mu}_D(W^{(1)}_{i,j}t^s)&=\sum_{1\leq r\leq l'}(e^{(r)}_{j,i}-{(-1)}^{\widehat{i}+\widehat{j}}e^{(r)}_{-i,-j})t^s
\end{align*}
and
\begin{align}
\widetilde{\mu}_D(W^{(2)}_{i,i}t)&=\sum_{\substack{1\leq r_1<r_2\leq l'\\s\in\mathbb{Z}}}e^{(r_1)}_{u,i}t^{-s}e^{(r_2)}_{i,u}t^s+\sum_{\substack{1\leq r_1<r_2\leq l'\\s\in\mathbb{Z}}}e^{(r_1)}_{u,-i}t^{-s}e^{(r_2)}_{-i,u}t^s\nonumber\\
&\quad-\sum_{\substack{1\leq r_1,r_2\leq l'\\u<i,s\geq0}}\limits{(-1)}^{\widehat{u}+\widehat{i}}e^{(r_1)}_{-i,-u}t^{-s}e^{(r_2)}_{i,u}t^s-\sum_{\substack{r_1,r_2\\u<i\\s\geq1}}\limits{(-1)}^{\widehat{u}+\widehat{i}}e^{(r_1)}_{i,u}t^{-s}e^{(r_2)}_{-i,-u}t^s\nonumber\\
&\quad-\sum_{\substack{1\leq r_1,r_2\leq l'\\u>i,s\geq0}}\limits{(-1)}^{\widehat{u}+\widehat{i}}e^{(r_1)}_{-i,-u}t^{-s}e^{(r_2)}_{i,u}t^s-\sum_{\substack{1\leq r_1,r_2\leq l'\\u>i,s\geq1}}\limits{(-1)}^{\widehat{u}+\widehat{i}}e^{(r_1)}_{i,u}t^{-s}e^{(r_2)}_{-i,-u}t^s\nonumber\\
&\quad-\sum_{\substack{1\leq r_1,r_2\leq l'\\s\geq0}}\limits e^{(r_1)}_{-i,-i}t^{-s}e^{(r_2)}_{i,i}t^s-\sum_{\substack{1\leq r_1,r_2\leq l'\\s\geq1}}\limits e^{(r_1)}_{i,i}t^{-s}e^{(r_2)}_{-i,-i}t^s\nonumber\\
&\quad-\sum_{\substack{1\leq r\leq l'}}\dfrac{(2r+1)\alpha+1}{2}e_{i,i}^{(r)}-\sum_{\substack{1\leq r\leq l'}}\dfrac{(2r+1)\alpha+1}{2}e_{-i,-i}^{(r)}.\label{align5}
\end{align}
\begin{Theorem}\label{main thm}
For $n\geq4$ and $l\geq2$, there exists a homomorphism 
\begin{equation*}
\Phi\colon TY(\widehat{\mathfrak{so}}(n))\to \mathcal{U}(\mathcal{W}^k(\mathfrak{so}(nl),(l^n)))
\end{equation*}
defined by $\tilde{\mu}_D\circ\Phi=(\bigotimes_{r=1}^l\limits\ev_{\xi_r})\circ\widetilde{\Delta}^l$,
where $\xi_r=\dfrac{(2r+1)\alpha+1}{2}\hbar$.
\end{Theorem}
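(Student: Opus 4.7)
The plan is to exploit the injectivity of $\widetilde{\mu}_D$ and the fact that the right-hand side $\Xi:=(\bigotimes_{r=1}^l\ev_{\xi_r})\circ\widetilde{\Delta}^l$ is visibly a composition of algebra homomorphisms (Theorem~\ref{cop} together with coassociativity, plus the fact that every $\ev_{\xi_r}$ is an algebra homomorphism). Once one proves the set-theoretic inclusion $\text{Im}(\Xi)\subset\text{Im}(\widetilde{\mu}_D)$, the formula $\Phi:=\widetilde{\mu}_D^{-1}\circ\Xi$ unambiguously defines the desired algebra homomorphism. Because $TY(\widehat{\mathfrak{so}}(n))$ is topologically generated by $\widehat{\mathfrak{k}}$ together with $\{B(h_i-h_{-i-2})\mid i\in I_n\setminus\{n-1\}\}$, and because $\text{Im}(\widetilde{\mu}_D)$ is a topologically closed subalgebra, it is enough to verify the inclusion on these two families of generators.

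For the first family, an element $x\in\widehat{\mathfrak{k}}\subset\widehat{\mathfrak{sl}}(n)$ is primitive under $\widetilde{\Delta}$, hence $\widetilde{\Delta}^l(x)=\square^l(x)$, and since each $\ev_{\xi_r}$ restricts to the identity on $\widehat{\mathfrak{sl}}(n)\subset Y_{\ve_1,\ve_2}(\widehat{\mathfrak{sl}}(n))$, one obtains
\begin{equation*}
\Xi(x)=\sum_{r=1}^{l}1^{\otimes r-1}\otimes x\otimes 1^{\otimes l-r}.
\end{equation*}
Comparing with the displayed formula for $\widetilde{\mu}_D(W^{(1)}_{i,j}t^s)$, the choice $x=e_{j,i}-(-1)^{\widehat{i}+\widehat{j}}e_{-i,-j}$ reproduces $\widetilde{\mu}_D(W^{(1)}_{i,j})$, and its $t$-translates reproduce $\widetilde{\mu}_D(W^{(1)}_{i,j}t^s)$; hence $\Xi$ maps all of $U(\widehat{\mathfrak{k}})$ into $\text{Im}(\widetilde{\mu}_D)$, and on this part $\Phi$ is forced to take $x$ to the explicit $W^{(1)}$-combination above.

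For the second family, the plan is to decompose $B(h_i-h_{-i-2})=J(h_i)-J(h_{-i-2})+(\hbar/8)[\,\cdot\,,h_i-h_{-i-2}]$ via \eqref{JJJ2}, iterate the coproduct formula \eqref{align6598} to get
\begin{equation*}
\widetilde{\Delta}^l(J(h_i))=\square^l(J(h_i))+\frac{\hbar}{2}\sum_{1\leq r_1<r_2\leq l}\sum_{\gamma\in\Delta_{\text{re}}}[h_i,x_\gamma]^{(r_1)}\otimes x_{-\gamma}^{(r_2)},
\end{equation*}
expand the $\hbar/8$-correction by primitivity (it lies in $U(\widehat{\mathfrak{sl}}(n))$), and then apply $\bigotimes_r\ev_{\xi_r}$ using the explicit formula for $\ev_a(h_{i,1})$ from the Proposition of Section~3. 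The output consists of three families of terms in $U(\widehat{\mathfrak{gl}}(n))^{\otimes l}_{\text{comp}}$: (i) bilinear mixed-slot terms arising from the coproduct cross-pieces; (ii) bilinear same-slot terms from the $\sum e_{i,a}t^{-s}e_{a,i}t^s$ parts of $\ev_{\xi_r}(h_{i,1})$ combined with the $\sum(\alpha_i,\gamma)x_{-\gamma}x_\gamma$ and $-\tfrac{\hbar}{2}h_i^2$ shifts built into $J(h_i)$; and (iii) linear terms in $h_i$ whose coefficient depends on $\xi_r$ and on the constant $-\hbar e_{i,i}e_{i+2,i+2}$ appearing in $\ev_{\xi_r}(h_{i,1})$.

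The main obstacle is to recognize that, after the antisymmetrization $i\leftrightarrow -i-2$ baked into $B(h_i-h_{-i-2})$ and after folding in the $\hbar/8$-correction of \eqref{JJJ2}, families (i), (ii), (iii) reproduce exactly the three blocks of terms appearing in the formula \eqref{align5} for $\widetilde{\mu}_D(W^{(2)}_{i,i}t)-\widetilde{\mu}_D(W^{(2)}_{-i-2,-i-2}t)$ (modulo $W^{(1)}$-corrections controlled by Lemma~\ref{Lemma}). Families (i) and (ii) match the double sums in \eqref{align5} after one uses the orthogonal symmetry $e_{a,b}\leftrightarrow-(-1)^{\widehat{a}+\widehat{b}}e_{-b,-a}$ that cuts $\widehat{\mathfrak{gl}}(n)^{\otimes l}$ down to the image of the Miura map; this is the rectangular-$D$ counterpart of the matching carried out by Brown in the finite setting. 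Family (iii) must match the diagonal constant $\tfrac{(2r+1)\alpha+1}{2}$ in the last two lines of \eqref{align5}, and this is precisely what forces the normalization $\xi_r=\tfrac{(2r+1)\alpha+1}{2}\hbar$; indeed, the choice of evaluation parameters in the theorem statement is dictated by this single book-keeping requirement. Once this identification is carried out, $\Xi(B(h_i-h_{-i-2}))$ is identified with $\widetilde{\mu}_D$ of an explicit element of $\mathcal{U}(\mathcal{W}^k(\mathfrak{so}(nl),(l^n)))$, so $\text{Im}(\Xi)\subset\text{Im}(\widetilde{\mu}_D)$ on all topological generators, and $\Phi=\widetilde{\mu}_D^{-1}\circ\Xi$ is the required homomorphism.
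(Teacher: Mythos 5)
Your proposal follows essentially the same route as the paper: reduce to the topological generators $\widehat{\mathfrak{k}}$ and $\{B(h_i-h_{-i-2})\}$, verify $\text{Im}\big((\bigotimes_{r}\ev_{\xi_r})\circ\widetilde{\Delta}^l\big)\subset\text{Im}(\widetilde{\mu}_D)$ on them, and set $\Phi=\widetilde{\mu}_D^{-1}\circ\Xi$ using injectivity of the Miura map, with the $\widehat{\mathfrak{k}}$ case handled identically and the $B(h_i-h_{-i-2})$ case handled via \eqref{align6598}, the evaluation formula for $h_{i,1}$, and matching against \eqref{align5} --- which is precisely the content of the paper's Claim~\ref{main claim}. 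The only differences are presentational: you leave the term-by-term matching (the ``main obstacle'') as a sketch rather than executing it, and the quadratic $W^{(1)}$-corrections are in fact computed directly in that claim rather than controlled by Lemma~\ref{Lemma}, which the paper reserves for the surjectivity argument.
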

\begin{proof}
It is enough to show that 
\begin{equation*}
((\bigotimes_{r=1}^l\limits\ev_{\xi_r})\circ\widetilde{\Delta}^l)(x)\in\text{Im}(\tilde{\mu}_D)\text{ for all }x\in\widehat{\mathfrak{k}}\text{ or }x=B(h_i-h_{-i-2}).
\end{equation*}
First, we show that $((\bigotimes_{r=1}^l\limits\ev_{\xi_r})\circ\widetilde{\Delta}^l)(\widehat{\mathfrak{k}})\subset\text{Im}(\tilde{\mu}_D)$. By a direct computation, we obtain
\begin{align*}
&\quad((\bigotimes_{r=1}^l\limits\ev_{\xi_r})\circ\widetilde{\Delta}^l)\big((e_{i,j}-{(-1)}^{\hat{i}+\hat{j}}e_{-j,-i})t^s\big)\\
&=(\bigotimes_{r=1}^l\limits\ev_{\xi_r})\big(\square^l(e_{i,j}-{(-1)}^{\hat{i}+\hat{j}}e_{-j,-i})t^s\big)\\
&=\displaystyle\sum_{1\leq r\leq l}\limits(e_{i,j}^{[r]}-{(-1)}^{\hat{i}+\hat{j}}e_{-j,-i}^{[r]})t^s\\
&=\tilde{\mu}_D(W^{(1)}_{j,i}t^s).
\end{align*}
Thus, we obtain $((\bigotimes_{r=1}^l\limits\ev_{\xi_r})\circ\widetilde{\Delta}^l)(\widehat{\mathfrak{k}})\subset\text{Im}(\tilde{\mu}_D)$.

Next, we show that $((\bigotimes_{r=1}^l\limits\ev_{\xi_r})\circ\widetilde{\Delta}^l)(B(h_i-h_{-i-2}))\in\text{Im}(\tilde{\mu}_D)$.
By the proof of Proposition~\ref{Prop}, we obtain
\begin{align*}
&\quad((\bigotimes_{r=1}^l\limits\ev_{\xi_r})\circ\widetilde{\Delta}^l)(B(h_i-h_{-i-2}))\\
&=\square^l(\ev_0(B(h_i-h_{-i-2})))+C_i^l+C_{-i}^l-C_{i+2}^l-C_{-i-2}^l+F_i^l+F_{-i}^l-F_{i+2}^l-F_{-i-2}^l+G_i-G_{i+2},
\end{align*}
where
\begin{align}
C_i^l&=-\dfrac{\hbar}{2}\sum_{\substack{1\leq r_1<r_2\leq l'\\u\neq i,s\in\mathbb{Z}}}e_{u,i}^{(r_1)}t^{-s} e_{i,u}^{(r_2)}t^{s}+\dfrac{\hbar}{2}\sum_{\substack{1\leq r_1<r_2\leq l'\\u\neq i,s\in\mathbb{Z}}}e_{i,u}^{(r_1)}t^{s} e_{u,i}^{(r_2)}t^{-s},\label{3.5.6}\\
F_i^l&=-\dfrac{\hbar}{2}\sum_{\substack{1\leq r_1<r_2\leq l'\\v\neq i,s\in\mathbb{Z}}}{(-1)}^{\widehat{v}+\widehat{i}}e^{(r_1)}_{-v,-i}t^{-s}e^{(r_2)}_{v,i}t^s+\dfrac{\hbar}{2}\sum_{\substack{1\leq r_1<r_2\leq l'\\u\neq i,s\in\mathbb{Z}}}{(-1)}^{\widehat{u}+\widehat{i}}e^{(r_1)}_{-i,-u}t^{-s}e^{(r_2)}_{i,u}t^s,\label{3.5.8}\\
G_i&=\hbar\sum_{\substack{1\leq r\leq l'}}\dfrac{(2r+1)\alpha+1}{2}e_{i,i}^{(r)}+\hbar\sum_{\substack{1\leq r\leq l'}}\dfrac{(2r+1)\alpha+1}{2}e_{-i,-i}^{(r)}.\label{3.5.9.1}
\end{align}
By the definition of $C_i^l$ and $F_i^l$, we have
\begin{align}
C_{-i}^l&=-\dfrac{\hbar}{2}\sum_{\substack{1\leq r_1<r_2\leq l'\\u\neq i,s\in\mathbb{Z}}}e_{-u,-i}^{(r_1)}t^{-s}e_{-i,-u}^{(r_2)}t^{s}+\dfrac{\hbar}{2}\sum_{\substack{1\leq r_1<r_2\leq l'\\u\neq i,s\in\mathbb{Z}}}e_{-i,-u}^{(r_1)}t^se_{-u,-i}^{(r_2)}t^{-s},\label{3.5.7}\\
F_{-i}^l&=-\dfrac{\hbar}{2}\sum_{\substack{1\leq r_1<r_2\leq l'\\v\neq i,s\in\mathbb{Z}}}{(-1)}^{\widehat{v}+\widehat{i}}e^{(r_1)}_{v,i}t^{-s}\otimes e^{(r_2)}_{v,i}t^s+\dfrac{\hbar}{2}\sum_{\substack{1\leq r_1<r_2\leq l'\\u\neq i,s\in\mathbb{Z}}}{(-1)}^{\widehat{u}+\widehat{i}}e^{(r_1)}_{i,u}t^{-s}\otimes e^{(r_2)}_{-i,-u}t^s.\label{3.5.9}
\end{align}
For a while, we compute $\ev_0(B(h_i-h_{-i-2}))$.
By a direct computation, we obtain 
\begin{align*}
\ev_0(J(h_i))&=A_i-A_{i+2},
\end{align*}
where
\begin{align}
A_i&=-\dfrac{\hbar}{2}e_{i,i}^2+\dfrac{\hbar}{2}\sum_{\substack{u>i\\s\geq0}}e_{u,i}t^{-s}e_{i,u}t^s+\dfrac{\hbar}{2}\sum_{\substack{i>v\\s\geq0}}e_{i,v}t^{-s}e_{v,i}t^s\nonumber\\
&\quad+\dfrac{\hbar}{2}\sum_{\substack{u<i\\s\geq1}}e_{u,i}t^{-s}e_{i,u}t^s+\dfrac{\hbar}{2}\sum_{\substack{i<v\\s\geq1}}e_{i,v}t^{-s}e_{v,i}t^s+\hbar\sum_{s\geq0}\limits e_{i,i}t^{-s}e_{i,i}t^s.\label{align10}
\end{align}
By using this decomposition of $\ev_0(J(h_i))$, we have
\begin{align*}
\ev_0(B(h_i-h_{-i-2}))=K_i-K_{i+2},
\end{align*}
where
\begin{align}
K_i&=A_i+A_{-i}+\dfrac{\hbar}{8}[\sum_{\substack{u<v\\s\geq1}}(e_{u,v}-{(-1)}^{\widehat{u}+\widehat{v}}e_{-v,-u})t^{-s}(e_{v,u}-{(-1)}^{\widehat{u}+\widehat{v}}e_{-u,-v})t^s,e_{i,i}+e_{-i,-i}]\nonumber\\
&\quad+\dfrac{\hbar}{8}[\sum_{\substack{u>v\\s\geq0}}(e_{u,v}-{(-1)}^{\widehat{u}+\widehat{v}}e_{-v,-u})t^{-s}(e_{v,u}-{(-1)}^{\widehat{u}+\widehat{v}}e_{-u,-v})t^s,e_{i,i}+e_{-i,-i}].\label{align2.5.18}
\end{align}

Moreover, by we can rewrite this decomposition of $\ev_0(B(h_i-h_{-i-2}))$
\begin{align*}
&\quad((\bigotimes_{r=1}^l\limits\ev_{\xi_r})\circ\widetilde{\Delta}^l)(B(h_i-h_{-i-2}))\\
&=\square^l(K_i)-\square^l(K_{i+2})+C_i^l+C_{-i}^l-C_{i+2}^l-C_{-i-2}^l+F_i^l+F_{-i}^l-F_{i+2}^l-F_{-i-2}^l+G_i-G_{i+2}.
\end{align*}
Thus, it is enough to show that $\square^l(K_i)+C_i^l+C_{-i}^l+F_i^l+F_{-i}^l+G_i+\hbar(\widetilde{\mu}_D(W^{(2)}_{i,i}t+W^{(2)}_{-i,-i}t))$ can be written as the sum of the terms generated by $W^{(1)}_{i,j}t^s$.
\begin{Claim}\label{main claim}
We find that
\begin{equation*}
\square^l(K_i)+C_i^l+C_{-i}^l+F_i^l+F_{-i}^l+G_i+\hbar(\widetilde{\mu}_D(W^{(2)}_{i,i}t+W^{(2)}_{-i,-i}t))
\end{equation*}
is equal to
\begin{align*}
X&=\dfrac{\hbar}{2}\sum_{\substack{v\neq i\\s\geq0}}(e_{i,v}-{(-1)}^{\hat{i}+\hat{v}}e_{-v,-i})t^{-s}(e_{v,i}-{(-1)}^{\hat{i}+\hat{v}}e_{-i,-v})t^s\\
&\quad+\dfrac{\hbar}{2}\sum_{\substack{v\neq i\\s\geq1}}(e_{i,v}-{(-1)}^{\hat{i}+\hat{v}}e_{-v,-i})t^{-s}(e_{v,i}-{(-1)}^{\hat{i}+\hat{v}}e_{-i,-v})t^s\\
&\quad-\dfrac{\hbar}{2}(e_{i,i}-e_{-i,-i})^2+\hbar\sum_{s\geq0}\limits (e_{i,i}-e_{-i,-i})t^{-s}(e_{i,i}-e_{-i,-i})t^s.
\end{align*}
\end{Claim}
\begin{proof}
First, we compute the second-4th terms of \eqref{align2.5.18}.
By changing $i$ to $-i$ in \eqref{align10}, we have
\begin{align}
A_{-i}&=-\dfrac{\hbar}{2}e_{-i,-i}^2+\dfrac{\hbar}{2}\sum_{\substack{u<i\\s\geq0}}e_{-u,-i}t^{-s}e_{-i,-u}t^s+\dfrac{\hbar}{2}\sum_{\substack{i<v\\s\geq0}}e_{-i,-v}t^{-s}e_{-v,-i}t^s\nonumber\\
&\quad+\dfrac{\hbar}{2}\sum_{\substack{u>i\\s\geq1}}e_{-u,-i}t^{-s}e_{-i,-u}t^s+\dfrac{\hbar}{2}\sum_{\substack{i>v\\s\geq1}}e_{-i,-v}t^{-s}e_{-v,-i}t^s+\hbar\sum_{s\geq0}\limits e_{-i,-i}t^{-s}e_{-i,-i}t^s.\label{align11}
\end{align}
By a direct computation, we also obtain
\begin{align}
&\quad\dfrac{\hbar}{8}[\sum_{\substack{u<v\\s\geq1}}(e_{u,v}-{(-1)}^{\widehat{u}+\widehat{v}}e_{-v,-u})t^{-s}(e_{v,u}-{(-1)}^{\widehat{u}+\widehat{v}}e_{-u,-v})t^s,e_{i,i}+e_{-i,-i}]\nonumber\\
&=\dfrac{\hbar}{2}\sum_{\substack{u<i\\s\geq1}}{(-1)}^{\widehat{u}+\widehat{i}}e_{-i,-u}t^{-s}e_{i,u}t^s-\dfrac{\hbar}{2}\sum_{\substack{i<v\\s\geq1}}{(-1)}^{\widehat{i}+\widehat{v}}e_{-v,-i}t^{-s}e_{v,i}t^s\nonumber\\
&\quad+\dfrac{\hbar}{2}\sum_{\substack{u<-i\\s\geq1}}{(-1)}^{\widehat{u}+\widehat{i}+1}e_{i,-u}t^{-s}e_{-i,u}t^s-\dfrac{\hbar}{2}\sum_{\substack{-i<v\\s\geq1}}{(-1)}^{\widehat{i}+\widehat{v}+1}e_{-v,i}t^{-s}e_{v,-i}t^s,\label{align7}\\
&\quad\dfrac{\hbar}{8}[\sum_{\substack{u>v\\s\geq0}}(e_{u,v}-{(-1)}^{\widehat{u}+\widehat{v}}e_{-v,-u})t^{-s}(e_{v,u}-{(-1)}^{\widehat{u}+\widehat{v}}e_{-u,-v})t^s,e_{i,i}+e_{-i,-i}]\nonumber\\
&=\dfrac{\hbar}{2}\sum_{\substack{u>i\\s\geq0}}{(-1)}^{\widehat{u}+\widehat{i}}e_{-i,-u}t^{-s}e_{i,u}t^s-\dfrac{\hbar}{2}\sum_{\substack{i>v\\s\geq0}}{(-1)}^{\widehat{i}+\widehat{v}}e_{-v,-i}t^{-s}e_{v,i}t^s\nonumber\\
&\quad+\dfrac{\hbar}{2}\sum_{\substack{u>-i\\s\geq0}}{(-1)}^{\widehat{u}+\widehat{i}+1}e_{i,-u}t^{-s}e_{-i,u}t^s-\dfrac{\hbar}{2}\sum_{\substack{-i>v\\s\geq0}}{(-1)}^{\widehat{i}+\widehat{v}+1}e_{-v,i}t^{-s}e_{v,-i}t^s,\label{align8}
\end{align}

Next, we compute $C_i^l+C_{-i}^l+F_i^l+F_{-i}^l+G_i+\hbar(\widetilde{\mu}_D(W^{(2)}_{i,i}t+W^{(2)}_{-i,-i}t))$.
We denote the $i$-th term of $(\text{equation number})$ by $(\text{equation number})_i$. By a direct computation, we obtain
\begin{align}
&\quad\eqref{3.5.6}_1+\hbar\eqref{align5}_1=\hbar\sum_{\substack{1\leq r_1<r_2\leq l'\\s\in\mathbb{Z}}}e^{(r_1)}_{i,i}t^{-s}e^{(r_2)}_{i,i}t^s+\dfrac{\hbar}{2}\sum_{\substack{1\leq r_1<r_2\leq l'\\s\in\mathbb{Z},u\neq i}}e^{(r_1)}_{u,i}t^{-s}e^{(r_2)}_{i,u}t^s,\label{stolo}\\
&\quad\eqref{3.5.7}_1+\hbar\eqref{align5}_2=\hbar\sum_{\substack{1\leq r_1<r_2\leq l\\s\in\mathbb{Z}}}e^{(r_1)}_{-i,-i}t^{-s}e^{(r_2)}_{-i,-i}t^s+\dfrac{\hbar}{2}\sum_{\substack{1\leq r_1<r_2\leq l'\\s\in\mathbb{Z},u\neq-i}}e^{(r_1)}_{u,-i}t^{-s}e^{(r_2)}_{-i,u}t^s,\label{stolo1}\\
&\quad\eqref{3.5.8}_2+\eqref{3.5.9}_2+\hbar\eqref{align5}_3+\hbar\eqref{align5}_4+\hbar\eqref{align5}_5+\hbar\eqref{align5}_6\nonumber\\
&=-\hbar\sum_{\substack{1\leq r\leq l',\\u<i,s\geq0}}\limits{(-1)}^{\widehat{u}+\widehat{i}}e^{(r)}_{-i,-u}t^{-s}e^{(r)}_{i,u}t^s-\hbar\sum_{\substack{1\leq r\leq l',\\u<i,s\geq1}}\limits{(-1)}^{\widehat{u}+\widehat{i}}e^{(r)}_{i,u}t^{-s}e^{(r)}_{-i,-u}t^s\nonumber\\
&\quad-\hbar\sum_{\substack{1\leq r\leq l'\\u>i,s\geq0}}\limits{(-1)}^{\widehat{u}+\widehat{i}}e^{(r)}_{-i,-u}t^{-s}e^{(r)}_{i,u}t^s-\hbar\sum_{\substack{1\leq r\leq l'\\u>i,s\geq1}}\limits{(-1)}^{\widehat{u}+\widehat{i}}e^{(r_1)}_{i,u}t^{-s}e^{(r_1)}_{-i,-u}t^s\nonumber\\
&\quad-\dfrac{\hbar}{2}\sum_{\substack{1\leq r_1<r_2\leq l'\\u\neq i,s\in\mathbb{Z}}}{(-1)}^{\widehat{u}+\widehat{i}}e^{(r_1)}_{-i,-u}t^{-s} e^{(r_2)}_{i,u}t^s-\dfrac{\hbar}{2}\sum_{\substack{1\leq r_1<r_2\leq l'\\u\neq i,s\in\mathbb{Z}}}{(-1)}^{\widehat{u}+\widehat{i}}e^{(r_1)}_{i,u}t^{-s} e^{(r_2)}_{-i,-u}t^s,\label{stolo3}\\
&\quad\hbar\eqref{align5}_9+\hbar\eqref{align5}_{10}+G_i=0,\\
&\quad\hbar\eqref{align5}_7+\hbar\eqref{align5}_8\nonumber\\
&=-\sum_{\substack{1\leq r\leq l'\\s\geq0}}\limits e^{(r)}_{-i,-i}t^{-s}e^{(r)}_{i,i}t^s-\sum_{\substack{1\leq r\leq l'\\s\geq1}}\limits e^{(r)}_{i,i}t^{-s}e^{(r)}_{-i,-i}t^s\nonumber\\
&\quad-\sum_{\substack{1\leq r_1,r_2\leq l'\\s\geq0\\r_1\neq r_2}}\limits e^{(r_1)}_{-i,-i}t^{-s}e^{(r_2)}_{i,i}t^s-\sum_{\substack{1\leq r_1,r_2\leq l'\\s\geq1\\r_1\neq r_2}}\limits e^{(r_1)}_{i,i}t^{-s}e^{(r_2)}_{-i,-i}t^s\label{stolo4}
\end{align}
By a direct computation, we have
\begin{align}
&\quad\eqref{3.5.6}_2+\eqref{3.5.7}_2+\eqref{3.5.8}_1+\eqref{3.5.9}_1+\eqref{stolo}_2+\eqref{stolo1}_2+\eqref{stolo3}_5+\eqref{stolo3}_6\nonumber\\
&=(\widetilde{\Delta}^l-\square^l)\big(\dfrac{\hbar}{2}\sum_{\substack{v\neq i\\s\geq0}}(e_{i,v}-{(-1)}^{\hat{i}+\hat{v}}e_{-v,-i})t^{-s}(e_{v,i}-{(-1)}^{\hat{i}+\hat{v}}e_{-i,-v})t^s\big)\nonumber\\
&\quad+(\widetilde{\Delta}^l-\square^l)\big(\dfrac{\hbar}{2}\sum_{\substack{v\neq i\\s\geq1}}(e_{i,v}-{(-1)}^{\hat{i}+\hat{v}}e_{-v,-i})t^{-s}(e_{v,i}-{(-1)}^{\hat{i}+\hat{v}}e_{-i,-v})t^s\big),\label{ahoo1}\\
&\quad\eqref{stolo4}_3+\eqref{stolo3}_4+\eqref{stolo}_1+\eqref{stolo1}_1\nonumber\\
&=(\widetilde{\Delta}^l-\square^l)\big(-\dfrac{\hbar}{2}(e_{i,i}-e_{-i,-i})^2+\hbar\sum_{s\geq0}\limits (e_{i,i}-e_{-i,-i})t^{-s}(e_{i,i}-e_{-i,-i})t^s\big),\label{ahoo2}\\
&\quad\eqref{stolo3}_1+\eqref{stolo3}_2+\eqref{stolo3}_3+\eqref{stolo3}_4+\eqref{stolo4}_1+\eqref{stolo4}_2\nonumber\\
&=-\square^l(\hbar\sum_{\substack{u<i,s\geq0}}\limits{(-1)}^{\widehat{u}+\widehat{i}}e_{-i,-u}t^{-s}e_{i,u}t^s)\nonumber\\
&\quad-\square^l(\hbar\sum_{\substack{u<i,s\geq1}}\limits{(-1)}^{\widehat{u}+\widehat{i}}e_{i,u}t^{-s}e_{-i,-u}t^s)\nonumber\\
&\quad-\square^l(\hbar\sum_{\substack{u>i,s\geq0}}\limits{(-1)}^{\widehat{u}+\widehat{i}}e_{-i,-u}t^{-s}e_{i,u}t^s)-\square^l(\hbar\sum_{\substack{u>i,s\geq1}}\limits{(-1)}^{\widehat{u}+\widehat{i}}e_{i,u}t^{-s}e_{-i,-u}t^s)\nonumber\\
&\quad-\square^l(\hbar\sum_{\substack{s\geq0}}\limits e_{-i,-i}t^{-s}e_{i,i}t^s+\hbar\sum_{\substack{s\geq1}}\limits e_{i,i}t^{-s}e_{-i,-i}t^s).\label{ahoo3}
\end{align}
We note that $((\bigotimes_{r=1}^l\limits\ev_{\xi_r})\circ\widetilde{\Delta}^l)(B(h_i-h_{-i-2}))+\hbar(\widetilde{\mu}_D(W^{(2)}_{i,i}t+W^{(2)}_{-i,-i}t))$ is equal to the sum of $\square^l(\ev_0(B(h_i-h_{-i-2})))$ and \eqref{ahoo1}-\eqref{ahoo3} and the sum of \eqref{ahoo1} and \eqref{ahoo2} is equal to $X$.

Thus, in order to prove Claim~\ref{main claim}, it is enough to show that
\begin{align}
&\ev_0(B(h_i-h_{-i-2}))-\hbar\sum_{\substack{u<i,s\geq0}}\limits{(-1)}^{\widehat{u}+\widehat{i}}e_{-i,-u}t^{-s}e_{i,u}t^s-\hbar\sum_{\substack{u<i,s\geq1}}\limits{(-1)}^{\widehat{u}+\widehat{i}}e_{i,u}t^{-s}e_{-i,-u}t^s\nonumber\\
&\quad-\hbar\sum_{\substack{u>i,s\geq0}}\limits{(-1)}^{\widehat{u}+\widehat{i}}e_{-i,-u}t^{-s}e_{i,u}t^s-\hbar\sum_{\substack{u>i,s\geq1}}\limits{(-1)}^{\widehat{u}+\widehat{i}}e_{i,u}t^{-s}e_{-i,-u}t^s\nonumber\\
&\quad-\hbar\sum_{\substack{s\geq0}}\limits e_{-i,-i}t^{-s}e_{i,i}t^s+\hbar\sum_{\substack{s\geq1}}\limits e_{i,i}t^{-s}e_{-i,-i}t^s\label{align6}
\end{align}
is equal to $X$.
By a direct computation, we have
\begin{align*}
&\quad\eqref{align6}_4+\eqref{align8}_1+\eqref{align8}_4+\eqref{align10}_2+\eqref{align11}_3\\
&=\dfrac{\hbar}{2}\sum_{\substack{u>i\\s\geq0}}(e_{u,i}-{(-1)}^{\widehat{i}+\widehat{u}}e_{-i,-u})t^{-s}(e_{i,u}-{(-1)}^{\widehat{i}+\widehat{u}}e_{-u,-i})^s,\\
&\quad\eqref{align6}_2+\eqref{align8}_2+\eqref{align8}_3+\eqref{align10}_3+\eqref{align11}_4\\
&=\dfrac{\hbar}{2}\sum_{\substack{i>v\\s\geq0}}(e_{i,v}-{(-1)}^{\widehat{i}+\widehat{u}}e_{-v,-i})t^{-s}(e_{v,i}-{(-1)}^{\widehat{i}+\widehat{u}}e_{-i,-v})t^s,\\
&\quad\eqref{align6}_3+\eqref{align7}_1+\eqref{align7}_4+\eqref{align10}_4+\eqref{align11}_5\\
&=\dfrac{\hbar}{2}\sum_{\substack{u<i\\s\geq1}}(e_{u,i}-{(-1)}^{\widehat{i}+\widehat{u}}e_{-i,-u})t^{-s}(e_{i,u}-{(-1)}^{\widehat{i}+\widehat{u}}e_{-u,-i})t^s,\\
&\quad\eqref{align6}_5+\eqref{align7}_2+\eqref{align7}_3+\eqref{align10}_5+\eqref{align11}_2\\
&=\dfrac{\hbar}{2}\sum_{\substack{i<v\\s\geq1}}(e_{i,v}-{(-1)}^{\widehat{i}+\widehat{u}}e_{-v,-i})t^{-s}(e_{v,i}-{(-1)}^{\widehat{i}+\widehat{u}}e_{-i,-v})t^s,\\
&\quad\eqref{stolo}_1+\eqref{stolo1}_1+\eqref{align5}_7+\eqref{align5}_8+\eqref{align10}_1+\eqref{align10}_6+\eqref{align11}_1+\eqref{align11}_6\\
&=-\dfrac{\hbar}{2}(e_{i,i}-e_{-i,-i})^2+\hbar\sum_{s\geq0}\limits (e_{i,i}-e_{-i,-i})t^{-s}(e_{i,i}-e_{-i,-i})t^s.
\end{align*}
Adding the above five relations, we complete the proof of Claim~\ref{main claim}.
\end{proof}
By a direct computation, we obtain
\begin{align*}
X&=\dfrac{\hbar}{2}\sum_{\substack{v\neq i\\s\geq0}}\tilde{\mu}_D(W^{(1)}_{v,i}t^{-s})\tilde{\mu}_D(W^{(1)}_{i,v}t^s)\\
&\quad+\dfrac{\hbar}{2}\sum_{\substack{v\neq i\\s\geq1}}\tilde{\mu}_D(W^{(1)}_{v,i}t^{-s})\tilde{\mu}_D(W^{(1)}_{i,v}t^s)\\
&\quad-\dfrac{\hbar}{2}(\tilde{\mu}_D(W^{(1)}_{i,i}))^2+\hbar\sum_{s\geq0}\limits \tilde{\mu}_D(W^{(1)}_{i,i}t^{-s})\tilde{\mu}_D(W^{(1)}_{i,i}t^s).
\end{align*}
Thus, $X=\square^l(K_i)+C_i^l+C_{-i}^l+F_i^l+F_{-i}^l+G_i+\hbar(\widetilde{\mu}_D(W^{(2)}_{i,i}t+W^{(2)}_{-i,-i}t))$ can be written as the sum of the terms generated by $W^{(1)}_{i,j}t^s$.
This completes the proof of Theorem~\ref{main thm}.
\end{proof}
\begin{Theorem}
Provided that $\alpha\neq0$, the homomorphism $\Phi$ is surjective.
\end{Theorem}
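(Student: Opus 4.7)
The plan is to apply Theorem~\ref{gener}: under $\alpha\ne0$, the vertex algebra $\mathcal{W}^k(\mathfrak{so}(nl),(l^n))$ is generated by $\{W^{(r)}_{i,j}\mid r=1,2,\ i,j\in I_n\}$, hence $\mathcal{U}(\mathcal{W}^k(\mathfrak{so}(nl),(l^n)))$ is topologically generated by $\{W^{(r)}_{i,j}t^s\mid r=1,2,\ s\in\mathbb{Z}\}$. It therefore suffices to show that each such generator lies in $\text{Im}(\Phi)$.

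The $r=1$ case is immediate from the proof of Theorem~\ref{main thm}: the computation there showed $\Phi\bigl((e_{i,j}-(-1)^{\widehat{i}+\widehat{j}}e_{-j,-i})t^s\bigr)=W^{(1)}_{j,i}t^s$, and these raw generators lie in $\widehat{\mathfrak{k}}\subset TY_{\ve_1,\ve_2}(\widehat{\mathfrak{sp}}(n))$. Let $\mathcal{A}\subset\mathcal{U}(\mathcal{W}^k(\mathfrak{so}(nl),(l^n)))$ denote the closed subalgebra generated by all such $W^{(1)}_{i,j}t^s$; then $\mathcal{A}\subset\text{Im}(\Phi)$.

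For $r=2$ the starting point is Claim~\ref{main claim}: combined with the injectivity of $\widetilde{\mu}_D$, it yields
\begin{equation*}
\Phi(B(h_i-h_{-i-2})) \equiv -\hbar\bigl(W^{(2)}_{i,i}+W^{(2)}_{-i,-i}-W^{(2)}_{i+2,i+2}-W^{(2)}_{-i-2,-i-2}\bigr)t \pmod{\mathcal{A}}
\end{equation*}
for every $i\in I_n\setminus\{n-1\}$. Assuming $\hbar\ne0$, the telescoping seeds $U_i:=(W^{(2)}_{i,i}+W^{(2)}_{-i,-i}-W^{(2)}_{i+2,i+2}-W^{(2)}_{-i-2,-i-2})t$ belong to $\text{Im}(\Phi)$. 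To propagate them, apply the bracket formula in $\mathcal{U}(\mathcal{W}^k(\mathfrak{so}(nl),(l^n)))$,
\begin{equation*}
[W^{(1)}_{a,b}t^r,W^{(2)}_{v,w}t^s]=(W^{(1)}_{a,b})_{(0)}W^{(2)}_{v,w}\,t^{r+s}+r\,(W^{(1)}_{a,b})_{(1)}W^{(2)}_{v,w}\,t^{r+s-1},
\end{equation*}
whose truncation is provided by Lemma~\ref{Lemma}(1). The $t^{r+s-1}$-term lies in $\mathcal{A}$, while the leading $t^{r+s}$-term realises the adjoint $\mathfrak{sp}(n)$-action on the span of $\{W^{(2)}_{v,w}\}$. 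Combining these $\mathfrak{sp}(n)$-orbits of the $U_i$'s with quadratic commutators of the $W^{(2)}$'s already obtained (whose leading coefficient is nondegenerate when $\alpha\ne0$ by Lemma~\ref{Lemma}(2)) is expected to produce every $W^{(2)}_{v,w}t$ modulo $\mathcal{A}$.

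Once every $W^{(2)}_{v,w}t$ lies in $\text{Im}(\Phi)$, an induction on $|s|$ using the same bracket with $r\ne 0$ delivers every $W^{(2)}_{v,w}t^s$, since the leading $t^{r+1}$-component $(W^{(1)}_{a,b})_{(0)}W^{(2)}_{v,w}$ is a nonzero $W^{(2)}$-combination for suitable $(a,b)$ by Lemma~\ref{Lemma}(1), allowing a bootstrap from $s=1$ to all integers. The hardest step is the $\mathfrak{sp}(n)$-module argument of the previous paragraph: one must verify that the span of the $U_i$'s generates, under the $(W^{(1)})_{(0)}$-action together with the quadratic $W^{(2)}$-brackets, the full space of $W^{(2)}_{v,w}$'s. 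Both $\mathfrak{sp}(n)$-isotypic components of $\text{span}\{W^{(2)}\}$ must be reached, which is where the hypothesis $\alpha\ne0$ plays its essential role through the nondegenerate leading coefficient in Lemma~\ref{Lemma}(2).
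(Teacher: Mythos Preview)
Your overall strategy matches the paper's: start from $\Phi(B(h_i-h_{-i-2}))$ to obtain certain diagonal combinations of $W^{(2)}$'s, spread these out via commutators with $W^{(1)}$'s (the $\mathfrak{sp}(n)$-action), and then use quadratic $W^{(2)}$--$W^{(2)}$ commutators together with $\alpha\neq0$ (Lemma~\ref{Lemma}(2)) to capture the remaining isotypic piece. The ingredients are the right ones.

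There is, however, a genuine ordering gap. You propose to first obtain \emph{all} $W^{(2)}_{v,w}t$ at the single mode $s=1$ (by $\mathfrak{sp}(n)$-action followed by quadratic commutators), and only afterwards to bootstrap in $s$. But the quadratic step cannot be executed with $W^{(2)}$'s available at a single mode. In $\mathcal{U}(\mathcal{W})$ one has
\[
[At,\,Bt]\;=\;(A_{(0)}B)\,t^{2}\;+\;(A_{(1)}B)\,t,
\]
and Lemma~\ref{Lemma}(2) only computes $(W^{(2)}_{i,i})_{(1)}W^{(2)}_{j,j}$, not $(W^{(2)}_{i,i})_{(0)}W^{(2)}_{j,j}$; the latter is a weight-$3$ element that may involve $W^{(3)}$-type generators and is not yet known to lie in $\mathrm{Im}\,\Phi$. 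With only the mode $t^{1}$ in hand you cannot separate the $(\cdot)_{(1)}$-contribution from the $(\cdot)_{(0)}$-contribution. The paper isolates the former via the telescoping identity
\[
[At,\,Bt^{s}]\;-\;[A,\,Bt^{s+1}]\;=\;(A_{(1)}B)\,t^{s},
\]
which requires the relevant $W^{(2)}$-combinations at the \emph{four} modes $t^{0},t^{1},t^{s},t^{s+1}$. That is why in the paper the bootstrap to all $s$ (for the off-diagonal $W^{(2)}_{j,i}$ and the differences $W^{(2)}_{i,i}-W^{(2)}_{i+2,i+2}$) is carried out \emph{before} the quadratic step, not after. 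Your argument is repaired by swapping these two stages: first propagate the $\mathfrak{sp}(n)$-orbit of the seeds $U_i$ to all modes using $[W^{(1)}t^{r},\,\cdot\,]$, and only then apply the quadratic commutator trick at each fixed $s$.

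A secondary point: you also need the ``higher terms'' in Lemma~\ref{Lemma}(2) to lie in $\mathrm{Im}\,\Phi$; this is what forces the paper to secure, prior to the quadratic step, all off-diagonal $W^{(2)}_{j,i}t^{s}$ and all products of $W^{(1)}$'s, so that those higher terms are already accounted for. Your sketch does not address this.
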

\begin{proof}
We denote the image of $TY_{\ve_1,\ve_2}(\widehat{sp}(n))$ via $\Phi$ by $\text{Im}\Phi$. By Theorem~\ref{gener}, it is enough to show that $\{W^{(r)}_{i,j}t^s\mid 1\leq i,j\leq n,r=1,2,s\in\mathbb{Z}\}$ is contained in $\text{Im}\Phi$. By the definition of $\Phi(U(\widehat{\mathfrak{k}}))$, $\text{Im}\Phi$ contains $W^{(1)}_{j,i}t^s$ for all $i\neq j$. Take $(i,j)$ such that $i\neq \pm j,-i-2$. By the definition of $\Phi(B(h_i-h_{i+2}))$, we find that
\begin{align*}
\gamma_i&=(W^{(2)}_{i,i}-W^{(2)}_{i+2,i+2})t-\sum_{\substack{m\geq1}}\limits W^{(1)}_{i,i}t^{-m}W^{(1)}_{i,i}t^m-\dfrac{1}{2}(W^{(1)}_{i,i})^2\\
&\quad+\sum_{\substack{m\geq1}}\limits W^{(1)}_{i+2,i+2}t^{-m}W^{(1)}_{i+2,i+2}t^m+\dfrac{1}{2}(W^{(1)}_{i+2,i+2})^2
\end{align*}
is contained in $\text{Im}\Phi$. By Lemma~\ref{Lemma}, we find that $[\gamma_i,W^{(1)}_{j,i}t^s]$ is equal to
\begin{align}
\gamma_{i,s}&=(1+\delta_{j,i+2}-{(-1)}^{p(i)+p(j)}\delta_{-j,i+2})W^{(2)}_{j,i}t^{s+1}+\dfrac{l-1}{2}s\alpha(1+\delta_{j,i+2}+\delta_{-i,j}+\delta_{-j,i+2})W^{(1)}_{j,i}t^{s}\nonumber\\
&\quad+\dfrac{1}{2}s{W}^{(1)}_{-i,-j}t^s+\dfrac{1}{2}\delta_{j,-i-2}{W}^{(1)}_{-i-2,i}-\dfrac{1}{2}\delta_{j,i+2}{W}^{(1)}_{i+2,i}\nonumber\\
&\quad-\sum_{\substack{m\geq1}}\limits W^{(1)}_{j,i}t^{-m+s}W^{(1)}_{i,i}t^m-\sum_{\substack{m\geq1}}\limits W^{(1)}_{i,i}t^{-m}W^{(1)}_{j,i}t^{m+s}\nonumber\\
&\quad-\dfrac{1}{2}(1+\delta_{-i,j})W^{(1)}_{j,i}t^{s}W^{(1)}_{i,i}-\dfrac{1}{2}(1+\delta_{-i,j})W^{(1)}_{i,i}W^{(1)}_{j,i}t^{s}\nonumber\\
&\quad-(\delta_{j,i+2}-\delta_{j,-i-2})\sum_{\substack{m\geq1}}\limits W^{(1)}_{i+2,j}t^{-m+s}W^{(1)}_{i+2,i+2}t^m\nonumber\\
&\quad-(\delta_{j,i+2}-\delta_{j,-i-2})\sum_{\substack{m\geq1}}\limits W^{(1)}_{i+2,i+2}t^{-m}W^{(1)}_{i+2,j}t^{m+s}\nonumber\\
&\quad-\dfrac{1}{2}(\delta_{j,i+2}-\delta_{j,-i-2})(W^{(1)}_{j,i+2}t^{s}W^{(1)}_{i+2,i+2}+W^{(1)}_{i+2,i+2}W^{(1)}_{j,i+2}t^{s})\label{align43}
\end{align}
for all $i\neq j$. Then, by Lemma~\ref{Lemma}, we obtain
\begin{align*}
&\quad[W^{(1)}_{i,j}t,\gamma_{i,s}]-[W^{(1)}_{i,j},\gamma_{i,s+1}]\\
&=\dfrac{l-1}{2}\alpha(1+\delta_{j,i+2}-{(-1)}^{p(i)+p(j)}\delta_{-j,i+2})(W^{(1)}_{i,i}+W^{(1)}_{j,j})t^{s+1}-\dfrac{l}{2}\alpha W^{(1)}_{i,i}t^{s+1}\\
&\qquad\qquad\qquad\qquad\qquad\qquad\qquad\qquad\qquad\qquad\qquad\qquad+\text{completion of sum of terms of $U(\widehat{\mathfrak{k}})$}.
\end{align*}
Then, we find that $W^{(1)}_{i,i}t^{s+1}$ is contained in $\text{Im}\Phi$. Since $(W^{(1)}_{p,p}-W^{(1)}_{q,q})t^{s+1}$ is contained in $\text{Im}\Phi$ for all $p,q$, we find that $W^{(1)}_{p,p}t^{s+1}$ is contained in $\text{Im}\Phi$ for all $p$.

Since $W^{(1)}_{i,j}t^s$ is contained in $\text{Im}\Phi$ for all $i,j$, $(W^{(2)}_{i,i}-W^{(2)}_{j,j})t$ is contained in $\text{Im}\Phi$. By Lemma~\ref{Lemma} (1), we obtain
\begin{align}
&\quad[W^{(1)}_{j,i}t^s,(W^{(2)}_{i,i}-W^{(2)}_{j,j})t]\nonumber\\
&=(2+\delta_{i,-j}{(-1)}^{\hat{i}+\hat{j}})W^{(2)}_{j,i}t^{s+1}+\dfrac{l-1}{2}s\alpha(2+\delta_{i,-j})W^{(1)}_{j,i}t^{s}.\label{arutyu}
\end{align}
By \eqref{arutyu}, we find that $W^{(2)}_{i,j}t^s\ (i\neq j)$ is contained in $\text{Im}\Phi$ by \eqref{align43}. By using Lemma~\ref{Lemma} (1), we have
\begin{align*}
[W^{(1)}_{i,i+2},W^{(2)}_{i+2,i}t^s]
&=(1+\delta_{2i+2,0})(W^{(2)}_{i+2,i+2}-W^{(2)}_{i,i})t^s.
\end{align*}
Since $W^{(1)}_{i,i+2}$ and $W^{(2)}_{i+2,i}t^s$ are contained in $\text{Im}\Phi$, $(W^{(2)}_{i+2,i+2}-W^{(2)}_{i,i})t^s$ is contained in $\text{Im}\Phi$.
By using Lemma~\ref{Lemma} (1), we obtain
\begin{align*}
&\quad[(W^{(2)}_{i,i}-W^{(2)}_{i+2,i+2})t,(W^{(2)}_{i,i}-W^{(2)}_{i+2,i+2})t^{s}]-[(W^{(2)}_{i,i}-W^{(2)}_{i+2,i+2}),(W^{(2)}_{i,i}-W^{(2)}_{i+2,i+2})t^{s+1}]\\
&=\alpha(W^{(2)}_{i,i}+W^{(2)}_{i+2,i+2})t^s
\end{align*}
for all $\widehat{i}=\widehat{i+2}$. By the assumption $\alpha\neq0$, $(W^{(2)}_{i,i}+W^{(2)}_{i+2,i+2})t^s$ is contained in $\text{Im}\Phi$. Since we have already shown that $(W^{(2)}_{i,i}-W^{(2)}_{i+2,i+2})t^s$ is contained in $\text{Im}\Phi$, $W^{(2)}_{i,i}t^s$ is contained in $\text{Im}\Phi$. This completes the proof.
\end{proof}

\appendix
\section{Generators of rectangular $W$-algebras of type $D$}
This section is devoted to the proof of Theorem~\ref{gener}.
We define a grading on $\mathfrak{b}$ by setting $\text{deg}(x)=j$ if $x\in\mathfrak{b}\cap\mathfrak{g}_{j}$. For $a,b\in I_{nl}$, let $\gamma_{a,b}$ be $\sum_{0< 2u\leq q-p}\limits\widehat{q+2u}+\widehat{p}\cdot\widehat{j}+\widehat{q}\cdot\widehat{i}$, where $p=\col(a),q=\col(b),j=\row(a),i=\row(b)$. Since
\begin{equation*}
\{\sum_{\substack{\row(a)=j,\row(b)=i,\\\col(a)=\col(b)+2s}}{(-1)}^{\gamma_{a,b}}f_{a,b}\mid0\leq s\leq l-1,1\leq i,j\leq n\}
\end{equation*}
forms a basis of $\mathfrak{so}(nl)^f=\{g\in\mathfrak{so}(nl)|[f,g]=0\}$, it is enough to show that $W^{(1)}_{i,j}$ and $W^{(2)}_{i,j}$ generate the term whose form is
\begin{equation*}
\sum_{\substack{\row(a)=j,\row(b)=i,\\\col(a)=\col(b)+2s}}{(-1)}^{\gamma_{a,b}}f_{a,b}[-1]+\text{higher terms}
\end{equation*}
for all $0\leq s\leq l-1,\ 1\leq i,j\leq n$ by Theorem~4.1 of \cite{KW1}. The proof is completed by two claims, that is, Lemma~\ref{Cl1} and Lemma~\ref{Cl2}. In order to simplify computations, we prepare the following notations. Let us set
\begin{gather*}
Z_{i,i}=\sum_{\substack{\row(a)=i,\row(b)=i,\\\col(a)=\col(b)+2=p}}{(-1)}^{\widehat{p}+\widehat{p}\cdot\widehat{i}+\widehat{p-2}\cdot\widehat{i}}f_{a,b}[-1],\quad
V_{i,i}=W^{(2)}_{i,i}-Z_{i,i}.
\end{gather*}
Then, $Z_{i,i}$ is a degree $-2$ term and $V_{i,i}$ is a degree $-1$ term. We also denote the condition that $\row(a)=i,\row(b)=j,\col(a)=\col(b)+2$ by $(A)_{i,j}$, the condition that
\begin{gather*}
\row(a_2)=i,\row(b_1)=j,p=\col(a_1)=\col(b_1)<\col(a_2)=\col(b_2)=q,\row(a_1)=\row(b_2)=r
\end{gather*}
by $(B)_{i,j}$, the condition that $\row(a)=i,\row(b)=j,\col(a)=\col(b)+2s$
by $(C)_{i,j}^s$, and the condition that $\row(c)=i,\row(d)=j,\col(c)=\col(d)+2s$
by $(D)_{i,j}^s$. Moreover, for all $a_i\in V^\kappa(\mathfrak{gl}(n))^{\otimes l}$ and $s_i\in\mathbb{Z}$, we set
\begin{equation*}
(a_1)_{(s_1)}(a_2)_{(s_2)}\cdots (a_{u-1})_{(s_{u-1})}a_{u}=(a_1)_{(s_1)}\Big((a_2)_{(s_2)}\big(\cdots ((a_{u-1})_{(s_{u-1})}a_{u})\cdots\big)\Big).
\end{equation*}
\begin{Lemma}\label{Cl1}
\textup{(1)}\ For all $i\neq j$, $\{W^{(r)}_{p,q}\mid 1\leq p.q\leq n,r=1,2\}$ generate
\begin{equation*}
\sum_{\substack{(C)_{j,i}^s}}{(-1)}^{\gamma_{a,b}}f_{a,b}[-1]+\text{higher terms}.
\end{equation*}
\textup{(2)}\ For all $i\neq j$, $\{W^{(r)}_{p,q}\mid 1\leq p,q\leq n,r=1,2\}$ generate
\begin{equation*}
\sum_{\substack{(C)_{i,i}^s}}{(-1)}^{\gamma_{a,b}}f_{a,b}[-1]-\sum_{\substack{(C)_{j,j}^s}}{(-1)}^{\gamma_{a,b}}f_{a,b}[-1]+\text{higher terms}.
\end{equation*}
\end{Lemma}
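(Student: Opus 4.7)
I would argue by induction on $s$, handling parts (1) and (2) in parallel. The inductive step manufactures a representative of column difference $2(s+1)$ from ingredients of column differences $2s$ and $2$ by a (0)-mode bracket inside $\mathcal{W}^k(\mathfrak{so}(nl),(l^n))$.

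For the base cases, $s=0$ of (1) is realized by $W^{(1)}_{j,i}$ itself, and $s=0$ of (2) by $W^{(1)}_{i,i}-W^{(1)}_{j,j}$. For $s=1$, the leading (degree $-2$) component of $W^{(2)}_{j,i}$ is
\begin{equation*}
\sum_{(A)_{j,i}}(-1)^{\widehat{p}+\widehat{p}\widehat{j}+\widehat{p-2}\widehat{i}}f_{a,b}[-1],
\end{equation*}
which agrees with the target $\sum_{(C)^1_{j,i}}(-1)^{\gamma_{a,b}}f_{a,b}[-1]$ up to the uniform per-term factor $(-1)^{\widehat{p}}$. I would absorb this mismatch by separating the sum by the parity of $p=\col(a)$ and producing each parity piece separately, using (0)-mode brackets with diagonal $W^{(1)}$'s via Lemma~\ref{Lemma}(1); the parity-selected sums differ from $W^{(2)}_{j,i}$ only by elements in strictly higher degree, which are absorbed into the ``higher terms''. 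Part (2) at $s=1$ proceeds analogously with $W^{(2)}_{i,i}-W^{(2)}_{j,j}$.

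For the inductive step, suppose $T_s$ with the stated leading form has been produced. I pick an auxiliary row pair $(p,q)$ so that the matching condition $\delta_{b',a}$ (for the degree-$(-2)$ ingredient $f_{a',b'}$) is non-empty for every $(a,b)$ in the sum defining $T_s$, and then take the (0)-product $(U_{p,q})_{(0)} T_s$, where $U_{p,q}$ denotes the degree-$(-2)$ part of $W^{(2)}_{p,q}$. On generators this is computed from $(f_{a',b'}[-1])_{(0)}(f_{a,b}[-1])=[f_{a',b'},f_{a,b}][-1]$, and each matrix-unit commutator contracted by $\delta_{b',a}$ or $\delta_{a',b}$ produces an $f_{x,y}$ with $\col(x)=\col(y)+2(s+1)$, landing in $\mathfrak{g}_{-2(s+1)}$. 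Collecting all contributions reconstructs $\sum_{(C)^{s+1}_{j,i}}(-1)^{\gamma_{a,b}}f_{a,b}[-1]$, while all contributions from the strictly higher-degree components of $W^{(2)}_{p,q}$ (its $f_{\cdot,\cdot}[-2]$ and quadratic parts) land in degree strictly above $-2(s+1)$ and are absorbed into the ``higher terms''. Part (2) follows from the same (0)-bracket applied to the diagonal difference produced at stage $s$, using that the matching $\delta$'s preserve the diagonal structure.

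The main obstacle is the combinatorial sign bookkeeping: one must verify that the sign $(-1)^{\gamma_{x,y}}$ produced by the commutator matches $(-1)^{\gamma_{a',b'}+\gamma_{a,b}}$ up to an explicit parity correction coming from the $\widehat{p}$ factors in $W^{(2)}$ and from the $\pm1$ in the matrix-unit commutator. The interesting edge cases are those where $\col(a),\col(b),\col(a'),\col(b')$ straddle $0$, since $\widehat{\cdot}$ has a jump there. A secondary obstacle is the twisted summand $-e_{-b,-a}$ inside each $f_{a,b}$, which produces additional delta-matches of the form $\delta_{b',-b}$; I must check that these either cancel against each other or combine to give the expected sign $(-1)^{\widehat{i}+\widehat{j}}$ in the target, which is the point of the argument that genuinely goes beyond the $\mathfrak{sl}(n)$-type calculation of \cite{U4}. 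Once these parity identities are established, the induction propagates uniformly up to $s=l-1$, and Theorem~\ref{gener} follows from the generation criterion of \cite{KW1} recalled at the start of the appendix.
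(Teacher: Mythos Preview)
Your overall strategy coincides with the paper's: produce elements of column gap $2(s+1)$ from those of gap $2s$ via a $(0)$-bracket with the degree $-2$ piece of some $W^{(2)}$. The paper organizes this more efficiently by first establishing the single bracket identity~\eqref{y1},
\begin{equation*}
\Bigl(\sum_{(C)_{j,i}^s}(-1)^{\gamma_{a,b}}f_{a,b}[-1]\Bigr)_{(0)}\sum_{(C)_{v,u}^t}(-1)^{\gamma_{a,b}}f_{a,b}[-1]
=\delta_{i,v}\sum_{(C)_{j,u}^{s+t}}(-1)^{\gamma_{a,b}}f_{a,b}[-1]-\cdots,
\end{equation*}
and then simply taking the element $\bigl((W^{(2)}_{j,j})_{(0)}\bigr)^{s}W^{(1)}_{i,j}$: the leading term of $W^{(1)}_{i,j}$ already carries the sign $(-1)^{\gamma_{a,b}}$ at $s=0$, and repeated application of the diagonal $Z_{j,j}$ pushes $s$ up with the signs controlled by~\eqref{y1}. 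Part~(2) is then a single application of $(W^{(1)}_{i,j})_{(0)}$ to the off-diagonal element just produced, again by~\eqref{y1}.

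Your detour through $W^{(2)}_{j,i}$ at $s=1$ creates the sign discrepancy $(-1)^{\widehat{p}}$ you notice, and your proposed repair---separating by the parity of $\col(a)$ using $(0)$-brackets with diagonal $W^{(1)}$'s---does not work: Lemma~\ref{Lemma}(1) shows that such brackets only permute the row indices of $W^{(2)}$ and never isolate a parity half of the column sum, so there is no way to extract the two parity pieces separately from the given generators. This is a genuine gap; the paper's choice of the diagonal $W^{(2)}_{j,j}$ and the starting point $s=0$ sidesteps the issue entirely. You also omit the boundary case $j=-i$, where the direct iteration picks up extra $\delta$-matches from the twisted summand of $f_{a,b}$; the paper handles it separately by first treating $j\neq\pm i$ and then bootstrapping with one further bracket against $W^{(1)}_{p,i}$ for an auxiliary $p\neq\pm i$.
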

\begin{proof}
\textup{(1)}\ By a direct computation, we obtain
\begin{align}
&\quad(\sum_{\substack{(C)_{j,i}^s}}{(-1)}^{\gamma_{a,b}}f_{a,b}[-1])_{(0)}\sum_{\substack{(C)_{v,u}^s}}{(-1)}^{\gamma_{a,b}}f_{a,b}[-1]\nonumber\\
&=\delta_{i,v}\sum_{\substack{(C)_{j,u}^{s+t}}}{(-1)}^{\gamma_{a,b}}f_{a,b}[-1]-\delta_{j,u}\sum_{\substack{(C)_{v,i}^{s+t}}}{(-1)}^{\gamma_{a,b}}f_{a,b}[-1]\nonumber\\
&\quad-\delta_{-j,v}\sum_{\substack{(C)_{-i,u}^{s+t}}}{(-1)}^{s+\hat{i}+\hat{j}+\gamma_{a,b}}f_{a,b}[-1]+\delta_{i,-u}\sum_{\substack{(C)_{v,-j}^{s+t}}}{(-1)}^{s+\hat{i}+\hat{j}+\gamma_{a,b}}f_{a,b}[-1].\label{y1}
\end{align}
By \eqref{y1}, we have the following equation;
\begin{align*}
((W^{(2)}_{j,j})_{(0)})^sW^{(1)}_{i,j}&=(Z_{j,j})^sW^{(1)}_{i,j}+\text{higher terms}\\
&=\sum_{\substack{\row(a)=j,\row(b)=i,\\\col(a)=\col(b)+2s}}{(-1)}^{\gamma_{a,b}}f_{a,b}[-1]+\text{higher terms}
\end{align*}
for all $i\neq-j, j$. Then, we have proven (1) in the case that $j\neq i,-i$. Taking $p\in I_{n}$ such that $i\neq \pm p$,
we obtain
\begin{align*}
(W^{(1)}_{p,i})_{(0)}\sum_{\substack{\row(a)=p,\row(b)=-i,\\\col(a)=\col(b)+s}}{(-1)}^{\gamma_{a,b}}f_{a,b}[-1]
&=\sum_{\substack{\row(a)=p,\row(b)=-i,\\\col(a)=\col(b)+s}}{(-1)}^{\gamma_{a,b}}f_{a,b}[-1]
\end{align*}
by \eqref{y1}. We have shown (1) in the case that $j=-i$. This completes the proof of (1).

\textup{(2)}\ It is enough to show the case when $i\neq \pm j$ since the case that $i=-j$ is naturally derived from other cases. By \eqref{y1}, we obtain
\begin{align*}
&\quad(W^{(1)}_{i,j})_{(0)}\sum_{\substack{\row(a)=i,\row(b)=j,\\\col(a)=\col(b)+s}}{(-1)}^{\gamma_{a,b}}f_{a,b}[-1]\\
&=\sum_{\substack{\row(a)=j,\row(b)=j,\\\col(a)=\col(b)+s}}{(-1)}^{\hat{i}+\hat{j}+\gamma_{a,b}}f_{a,b}[-1]-\sum_{\substack{\row(a)=i,\row(b)=i,\\\col(a)=\col(b)+s}}{(-1)}^{\hat{i}+\hat{j}+\gamma_{a,b}}f_{a,b}[-1]
\end{align*}
for all $i\neq \pm j$. TWe have shown (2).
\end{proof}

\begin{Lemma}\label{Cl2}
Suppose that $j\neq \pm i$. We obtain
\begin{align*}
&\quad(W^{(2)}_{i,i})_{(1)}(W^{(1)}_{i,j})_{(0)}((W^{(2)}_{i,i})_{(0)})^sW^{(1)}_{j,i}\nonumber\\
&=-s\sum_{\substack{\row(a)=i,\row(b)=i,\\\col(a)=\col(b)+2s}}{(-1)}^{\gamma_{a,b}}f_{a,b}[-1]+s\sum_{\substack{\row(a)=j,\row(b)=j,\\\col(a)=\col(b)+2s}}{(-1)}^{\gamma_{a,b}}f_{a,b}[-1]\nonumber\\
&\quad+\alpha\sum_{\substack{\row(a)=i,\row(b)=i,\\\col(a)=\col(b)+2s}}{(-1)}^{\gamma_{a,b}}f_{a,b}[-1].
\end{align*}
\end{Lemma}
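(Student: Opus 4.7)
The plan is to evaluate the triple composition from the innermost factor outward, using Lemma~\ref{Lemma} at each stage together with the vertex-algebra skew-symmetry and Borcherds commutator identities, and keeping only the lowest-degree (in the $\mathfrak{g}_p$-grading) contributions at each step, with everything else absorbed into ``higher terms''.

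First I would show by induction on $s$ that
\[
((W^{(2)}_{i,i})_{(0)})^s W^{(1)}_{j,i} = \sum_{(C)_{j,i}^{s}}(-1)^{\gamma_{a,b}} f_{a,b}[-1] + \text{higher terms}.
\]
The base case is trivial. For the inductive step, skew-symmetry rewrites $(W^{(2)}_{i,i})_{(0)}(\cdot)$ as $-(\cdot)_{(0)}W^{(2)}_{i,i}$ plus total $\partial$ of terms of strictly higher column offset, and Lemma~\ref{Lemma}(1) supplies the required leading behavior, with $j\neq\pm i$ killing off the unwanted $\delta_{-j,v}$ and $\delta_{i,-u}$ pieces. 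This mirrors exactly the computation $((W^{(2)}_{j,j})_{(0)})^s W^{(1)}_{i,j}=(Z_{j,j})^s W^{(1)}_{i,j}+\text{higher}$ used in the proof of Lemma~\ref{Cl1}(1). Next I would apply $(W^{(1)}_{i,j})_{(0)}$ and invoke formula~\eqref{y1}: under $j\neq\pm i$ only the first two Kronecker deltas contribute, yielding
\[
(W^{(1)}_{i,j})_{(0)}((W^{(2)}_{i,i})_{(0)})^s W^{(1)}_{j,i} = \sum_{(C)_{i,i}^{s}}(-1)^{\gamma_{a,b}} f_{a,b}[-1] - \sum_{(C)_{j,j}^{s}}(-1)^{\gamma_{a,b}} f_{a,b}[-1] + \text{higher terms},
\]
which is precisely the expression produced by Lemma~\ref{Cl1}(2).

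Finally I would apply $(W^{(2)}_{i,i})_{(1)}$. The leading-degree part of the answer receives contributions from two distinct sources. The \emph{outer} action on the step-two expression, extended by Lemma~\ref{Lemma}(2) to the col-shifted sums, produces the $\alpha\sum_{(C)_{i,i}^{s}}$-summand: the factor $1+\alpha\delta_{i,j}$ with $i=j$ supplies the $\alpha$, while the coefficients on $W^{(2)}_{j,j}$-type contributions cancel against the $i\leftrightarrow j$ terms under the assumption $j\neq\pm i$, modulo higher-degree tails. The $\pm s$-summands, on the other hand, are collected when $(W^{(2)}_{i,i})_{(1)}$ is commuted past the $s$ copies of $(W^{(2)}_{i,i})_{(0)}$ sitting inside the triple composition. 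The vertex-algebra commutator formula
\[
[(W^{(2)}_{i,i})_{(1)},(W^{(2)}_{i,i})_{(0)}]=((W^{(2)}_{i,i})_{(0)} W^{(2)}_{i,i})_{(1)}+((W^{(2)}_{i,i})_{(1)} W^{(2)}_{i,i})_{(0)}
\]
produces at each commutation a unit contribution on the $\sum_{(C)_{i,i}^{s}}$-sum with sign $-1$ and on $\sum_{(C)_{j,j}^{s}}$ with sign $+1$; summing over the $s$ commutations gives the coefficients $-s$ and $+s$.

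The main obstacle will be the last step. The ``higher terms'' accumulated during the first two reductions are not inert: $(W^{(2)}_{i,i})_{(1)}$ can couple its lower-degree components to them and produce fresh leading-degree terms, so one must verify that all such cross-contributions either vanish under the hypothesis $j\neq\pm i$ or collapse into the three claimed summands. The delicate bookkeeping of the parities $\widehat{\cdot}$ appearing in $\gamma_{a,b}$ and the signs in \eqref{y1} is the real labor; once these signs are organized consistently across the three reductions, the expression $-s\sum_{(C)_{i,i}^{s}}+s\sum_{(C)_{j,j}^{s}}+\alpha\sum_{(C)_{i,i}^{s}}$ emerges as asserted, and combining with Lemma~\ref{Cl1}(2) isolates the required generator $\sum_{(C)_{i,i}^{s}}(-1)^{\gamma_{a,b}}f_{a,b}[-1]$ once $\alpha\neq 0$.
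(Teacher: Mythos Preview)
Your first two reductions are essentially what the paper does: indeed $((W^{(2)}_{i,i})_{(0)})^sW^{(1)}_{j,i}=\sum_{(C)^s_{i,j}}(-1)^{\gamma_{a,b}}f_{a,b}[-1]+\text{higher}$, and applying $(W^{(1)}_{i,j})_{(0)}$ yields (via \eqref{y1}) $\sum_{(C)^s_{j,j}}-\sum_{(C)^s_{i,i}}+\text{higher}$ (you have the sign and one subscript reversed, but that is cosmetic; compare the paper's \eqref{asi}).

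The genuine gap is in step~3. You invoke an ``extension of Lemma~\ref{Lemma}(2) to the col-shifted sums'', but Lemma~\ref{Lemma}(2) only gives $(W^{(2)}_{i,i})_{(1)}W^{(2)}_{j,j}$ modulo higher terms; the elements $\sum_{(C)^s_{i,i}}$ for $s\geq2$ are not of $W^{(2)}$-type, and no such extension is available --- proving it would amount to proving the lemma itself. Your commutator argument for the $\pm s$ coefficients is likewise unsupported: from Lemma~\ref{Lemma}(2) one has $(W^{(2)}_{i,i})_{(1)}W^{(2)}_{i,i}=2(1+\alpha)W^{(2)}_{i,i}+\text{higher}$, not a unit, and the piece $((W^{(2)}_{i,i})_{(0)}W^{(2)}_{i,i})_{(1)}$ of the Borcherds commutator is never analyzed. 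Finally, you correctly flag that the higher terms from steps~1--2 can be fed back to leading order by the outer $(W^{(2)}_{i,i})_{(1)}$, but you offer no mechanism to compute them.

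The paper resolves all three issues with a single device you do not use: the decomposition $W^{(2)}_{i,i}=Z_{i,i}+V_{i,i}$ into its lowest-degree part $Z_{i,i}$ (degree $-2$) and the remainder $V_{i,i}$. Degree counting then forces the leading contribution to come from exactly three combinations --- all $Z$'s, or one $V$ either outermost or in one of the $s$ inner slots --- giving the expansion \eqref{aku}. Each of these is then evaluated by direct OPE of the underlying $f_{a,b}[-1]$'s (equations \eqref{aku1}--\eqref{aku3}), never appealing to relations among the $W$'s beyond \eqref{y1}. In particular the $\alpha$-term and the $\pm s$-terms arise from entirely different pieces than your outline suggests: the coefficient $\alpha s$ comes from the outermost $V$ acting via its $\partial$-part (equation \eqref{aku2}), the coefficient $-\alpha(s-1)$ from the single inner $V$ at position $t=1$ (equation \eqref{equ3}), and the $\pm s$ from summing the contribution \eqref{equ2} over the $s$ inner slots; these combine to $\alpha$, $-s$, $+s$. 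Your high-level bookkeeping via Lemma~\ref{Lemma} cannot see this cancellation pattern, because the relevant information lives one level down, in the explicit $f_{a,b}$-presentation of $Z$ and $V$.
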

\begin{proof}
By the degree of $Z_{i,i}$ and $V_{i,i}$, we obtain
\begin{align}
&\quad(W^{(2)}_{i,i})_{(1)}(W^{(1)}_{i,j})_{(0)}((W^{(2)}_{i,i})_{(0)})^sW^{(1)}_{j,i}\nonumber\\
&=(Z_{i,i})_{(1)}(W^{(1)}_{i,j})_{(0)}((Z_{i,i})_{(0)})^sW^{(1)}_{j,i}+(V_{i,i})_{(1)}(W^{(1)}_{i,j})_{(0)}((Z_{i,i})_{(0)})^sW^{(1)}_{j,i}\nonumber\\
&\quad+\sum_{1\leq t\leq s}(Z_{i,i})_{(1)}(W^{(1)}_{i,j})_{(0)}((Z_{i,i})_{(0)})^{s-t}(V_{i,i})_{(0)}((Z_{i,i})_{(0)})^{t-1}W^{(1)}_{j,i}+\text{higher terms}.\label{aku}
\end{align}
We compute each terms of the right hand side of \eqref{aku}. Let us compute the first term of the right hand side of \eqref{aku}.
By \eqref{y1}, we obtain
\begin{align}
(W^{(1)}_{i,j})_{(0)}((Z_{i,i})_{(0)})^sW^{(1)}_{j,i}
&=\sum_{\substack{(C)_{j,j}^s}}{(-1)}^{\gamma_{a,b}}f_{a,b}[-1]-\sum_{\substack{(C)_{i,i}^s}}{(-1)}^{\gamma_{a,b}}f_{a,b}[-1].\label{asi}
\end{align}
By \eqref{y1} and \eqref{asi}, we have
\begin{gather}
(Z_{i,i})_{(1)}(W^{(1)}_{i,j})_{(0)}((Z_{i,i})_{(0)})^sW^{(1)}_{j,i}=0.\label{aku1}
\end{gather}
Next, let us compute the second term of the right hand side of \eqref{aku}.
By \eqref{asi}, we obtain
\begin{align}
&\quad(V_{i,i})_{(1)}(W^{(1)}_{i,j})_{(0)}((Z_{i,i})_{(0)})^sW^{(1)}_{j,i}\nonumber\\
&=\sum_{\substack{(A)_{i,i},(C)_{j,j}^s}}({(-1)}^{(\widehat{r}+\widehat{i})\cdot(\widehat{p}+\widehat{q})+\gamma_{a,b}}f_{a_1,b_1}[-1]f_{a_2,b_2}[-1])_{(1)}f_{a,b}[-1]\nonumber\\
&\quad-\sum_{\substack{(A)_{i,i},(C)_{i,i}^s}}({(-1)}^{(\widehat{r}+\widehat{i})\cdot(\widehat{p}+\widehat{q})+\gamma_{a,b}}f_{a_1,b_1}[-1]f_{a_2,b_2}[-1])_{(1)}f_{a,b}[-1]\nonumber\\
&\quad+(\alpha\sum_{\substack{(C)_{i,i}^p}}\dfrac{p}{2}f_{a,b}[-2])_{(1)}\sum_{\substack{(C)_{i,i}^s}}{(-1)}^{\gamma_{a,b}}f_{a,b}[-1].\label{arf}
\end{align}
By a direct computation, we obtain
\begin{align*}
&\quad\text{the first term of \eqref{arf}}=\text{the second term of \eqref{arf}}\\
&=\sum_{\substack{\row(a)=i,\row(b)=i,\\\col(a)=\col(b)+s=p}}{(-1)}^{(\widehat{j}+\widehat{i})\cdot(\widehat{p}+\widehat{p-s})+\gamma_{a,b}}f_{a,b}[-1],
\end{align*}
\begin{align*}
&\text{the third term of \eqref{arf}}=\alpha\sum_{\substack{(C)_{i,i}^s}}{(-1)}^{\gamma_{a,b}}sf_{a,b}[-1].
\end{align*}
Thus, we obtain
\begin{align}
&(V_{i,i})_{(1)}(W^{(1)}_{i,j})_{(0)}((Z_{i,i})_{(0)})^sW^{(1)}_{j,i}=\alpha\sum_{\substack{(C)_{i,i}^s}}{(-1)}^{\gamma_{a,b}}sf_{a,b}[-1].\label{aku2}
\end{align}
Next, let us compute the third term of \eqref{aku}.
By \eqref{y1}, we obtain
\begin{align*}
&((Z_{i,i})_{(0)})^{t-1}W^{(1)}_{j,i}=\sum_{\substack{(C)_{i,i}^{t-1}}}{(-1)}^{\gamma_{a,b}}f_{a,b}[-1].
\end{align*}
Since
\begin{align*}
(V_{i,i})_{(0)}=(\sum_{(A)_{i,i}}{(-1)}^{(\widehat{r}+\widehat{i})\cdot\widehat{p}+(\widehat{i}+\widehat{r})\cdot\widehat{q}}f_{a_1,b_1}[-1]f_{a_2,b_2}[-1])_{(0)}
\end{align*}
holds, we can rewrite $(V_{i,i})_{(0)}((Z_{i,i})_{(0)})^{t-1}W^{(1)}_{j,i}$ as
\begin{align*}
&\sum_{\substack{(A)_{i,i},(C)_{i,j}^{t-1}}}{(-1)}^{\beta_1}f_{a_1,b_1}[-1][f_{a_2,b_2},f_{a,b}][-1]+\sum_{\substack{(A)_{i,i},(C)_{i,j}^{t-1}}}{(-1)}^{\beta_1}f_{a_2,b_2}[-1][f_{a_1,b_1},f_{a,b}][-1],
\end{align*}
where $\beta_1=\gamma_{a,b}+(\widehat{r}+\widehat{i})\cdot(\widehat{p}+\widehat{q})$
such that $\row(a_1)=r,\col(a_1)=p,\col(a_2)=q$.
By a direct computation, we can rewrite $(V_{i,i})_{(0)}((Z_{i,i})_{(0)})^{t-1}W^{(1)}_{j,i}$ as
\begin{align}
&\sum_{\substack{(A)_{i,i},(C)_{i,j}^{t-1}}}{(-1)}^{\beta_1}\delta_{b_2,a}f_{a_1,b_1}[-1]f_{a_2,b}[-1]+\sum_{\substack{(A)_{i,i},(C)_{i,j}^{t-1}}}{(-1)}^{\beta_1}\delta_{b_2,-b}f_{a_1,b_1}[-1]f_{a,-a_2}[-1]\nonumber\\
&\quad+\sum_{\substack{(A)_{i,i},(C)_{i,j}^{t-1}}}{(-1)}^{\beta_1}\delta_{b_1,a}f_{a_2,b_2}[-1]f_{a_1,b}[-1]-\sum_{\substack{(A)_{i,i},(C)_{i,j}^{t-1}}}{(-1)}^{\beta_1}\delta_{a_1,b}f_{a_2,b_2}[-1]f_{a,b_1}[-1]\nonumber\\
&\quad-\sum_{\substack{(A)_{i,i},(C)_{i,j}^{t-1}}}{(-1)}^{\beta_1}\delta_{a_1,-a}f_{a_2,b_2}[-1]f_{-b_1,b}[-1]+\delta_{t,1}\sum_{\substack{(C)_{i,j}^{t-1}}}\alpha{(-1)}^{\gamma_{a,b}}\dfrac{(\col(b)-1+n)}{2}f_{a,b}[-2].\label{ti2}
\end{align}
Let us denote the sum of the first five terms of \eqref{ti2} by $B_t$. We can rewrite
\begin{equation*}
(Z_{i,i})_{(1)}(W^{(1)}_{i,j})_{(0)}((Z_{i,i})_{(0)})^{s-t}(V_{i,i})_{(0)}((Z_{i,i})_{(0)})^{t-1}W^{(1)}_{j,i}
\end{equation*}
as
\begin{align}
&-\displaystyle\sum_{g=0}^{s-t}\limits \begin{pmatrix} r-t\\g\end{pmatrix}((Z_{i,i})_{(0)})^{s-t-g}(\displaystyle\sum_{(D)^g_{j,i}}\limits {(-1)}^{\gamma_{c,d}}f_{c,d}[-1])_{(1)}B_t+(W^{(1)}_{i,j})_{(0)}((Z_{i,i})_{(0)})^{s-t}((Z_{i,i})_{(1)})B_t\nonumber\\
&\quad+(Z_{i,i})_{(1)}(W^{(1)}_{i,j})_{(0)}((Z_{i,i})_{(0)})^{s}\delta_{t,1}\sum_{\substack{(C)_{i,j}^{t-1}}}\alpha{(-1)}^{\gamma_{a,b}}\dfrac{(\col(b)-1+n)}{2}f_{a,b}[-2].\label{ti}
\end{align}
Let us compute each terms of \eqref{ti}. By a direct computation, we obtain
\begin{align}
\text{the third term of \eqref{ti}}
&=-\delta_{t,1}\alpha\sum_{\substack{\row(a)=i,\row(b)=i,\\\col(a)=\col(b)+2s}}{(-1)}^{\gamma_{a,b}}(s-1)f_{a,b}[-1].\label{equ3}
\end{align}
Next, we compute the first term of \eqref{ti}.
By \eqref{ti2}, we can rewrite $(\displaystyle\sum_{(D)^g_{j,i}}\limits {(-1)}^{\gamma_{c,d}}f_{c,d}[-1])_{(1)}B_t$ as
\begin{align}
&\sum_{\substack{(A)_{i,i},(C)_{i,j}^{t-1},(D)_{i,i}^g}}{(-1)}^{\beta_1+\gamma_{c,d}}\delta_{b_2,a}[[f_{c,d}, f_{a_1,b_1}],f_{a_2,b}][-1]\nonumber\\
&\quad+\sum_{\substack{(A)_{i,i},(C)_{i,j}^{t-1},(D)_{i,i}^g}}{(-1)}^{\beta_1+\gamma_{c,d}}\delta_{b_2,-b}[[f_{c,d},f_{a_1,b_1}],f_{a,-a_2}][-1]\nonumber\\
&\quad+\sum_{\substack{(A)_{i,i},(C)_{i,j}^{t-1},(D)_{i,i}^g}}{(-1)}^{\beta_1+\gamma_{c,d}}\delta_{b_1,a}[[f_{c,d},f_{a_2,b_2}],f_{a_1,b}][-1]\nonumber\\
&\quad-\sum_{\substack{(A)_{i,i},(C)_{i,j}^{t-1},(D)_{i,i}^g}}{(-1)}^{\beta_1+\gamma_{c,d}}\delta_{a_1,b}[[f_{c,d},f_{a_2,b_2}],f_{a,b_1}][-1]\nonumber\\
&\quad-\sum_{\substack{(A)_{i,i},(C)_{i,j}^{t-1},(D)_{i,i}^g}}{(-1)}^{\beta_1+\gamma_{c,d}}\delta_{a_1,-a}[[f_{c,d},f_{a_2,b_2}]f_{-b_1,b}][-1].\label{ti3}
\end{align}
We compute each terms of the right hand side of \eqref{ti3}. 
By a direct computation, we obtain
\begin{align}
\text{the first term of \eqref{ti3}}&=-\sum_{\substack{(A)_{i,i},(C)_{i,j}^{t-1},(D)_{i,i}^g}}{(-1)}^{\beta_1+\gamma_{c,d}}\delta_{b_2,a}\delta_{d,a_1}\delta_{c,b}f_{a_2,b_1}[-1],\label{1-1}\\
\text{the second term of \eqref{ti3}}&=0,\label{1-2}\\
\text{the third term of \eqref{ti3}}
&=-\sum_{\substack{(A)_{i,i},(C)_{i,j}^{t-1},(D)_{i,i}^g}}{(-1)}^{\beta_1+\gamma_{c,d}}\delta_{b_1,a}\delta_{d,-b_2}\delta_{a_2,-a_1}f_{c,b}[-1]\nonumber\\
&\quad-\sum_{\substack{(A)_{i,i},(C)_{i,j}^{t-1},(D)_{i,i}^g}}{(-1)}^{\beta_1+\gamma_{c,d}}\delta_{b_1,a}\delta_{d,-b_2}\delta_{c,b}f_{a_1,-a_2}[-1],\label{1-3}\\
\text{the 4-th term of \eqref{ti3}}&=\sum_{\substack{(A)_{i,i},(C)_{i,j}^{t-1},(D)_{i,i}^g}}{(-1)}^{\beta_1+\gamma_{c,d}}\delta_{a_1,b}\delta_{b_2,c}\delta_{d,a}f_{a_2,b_1}[-1],\label{1-4}\\
\text{the 5-th term of \eqref{ti3}}
&=\sum_{\substack{(A)_{i,i},(C)_{i,j}^{t-1},(D)_{i,i}^g}}{(-1)}^{\beta_1+\gamma_{c,d}}\delta_{a_1,-a}\delta_{d,a_2}\delta_{b_2,-b_1}f_{c,b}[-1]\nonumber\\
&\quad+\sum_{\substack{(A)_{i,i},(C)_{i,j}^{t-1},(D)_{i,i}^g}}{(-1)}^{\beta_1+\gamma_{c,d}}\delta_{a_1,-a}\delta_{d,a_2}\delta_{b,c}f_{-b_1,b_2}[-1].\label{1-5}
\end{align}
Since
\begin{gather*}
\eqref{1-1}=-\eqref{1-4},\quad
\text{the first term of }\eqref{1-3}=-\text{the first term of }\eqref{1-5},\\
\text{the second term of }\eqref{1-3}=-\text{the second term of }\eqref{1-5}
\end{gather*}
hold, we obtain
\begin{equation}\label{equ1}
\text{the first term of \eqref{ti}}=0
\end{equation}
by adding \eqref{1-1}-\eqref{1-5}.

Next, let us compute the second term of \eqref{ti}. By a direct computation, we also obtain
\begin{align}
&\quad((Z_{i,i})_{(0)})^{s-t}((Z_{i,i})_{(1)})B_t\nonumber\\
&=\sum_{\substack{(A)_{i,i},(C)_{i,j}^{t-1},(D)_{i,i}^1}}{(-1)}^{\beta_1+\gamma}\delta_{b_2,a}[[f_{c,d},f_{a_1,b_1}],f_{a_2,b}][-1]\nonumber\\
&\quad+\sum_{\substack{(A)_{i,i},(C)_{i,j}^{t-1},(D)_{i,i}^1}}{(-1)}^{\beta_1+\gamma_{c,d}}\delta_{b_2,-b}[[f_{c,d},f_{a_1,b_1}],f_{a,-a_2}][-1]\nonumber\\
&\quad+\sum_{\substack{(A)_{i,i},(C)_{i,j}^{t-1},(D)_{i,i}^1}}{(-1)}^{\beta_1+\gamma_{c,d}}\delta_{b_1,a} [[f_{c,d},f_{a_2,b_2}],f_{a_1,b}][-1]\nonumber\\
&\quad-\sum_{\substack{(A)_{i,i},(C)_{i,j}^{t-1},(D)_{i,i}^1}}{(-1)}^{\beta_1+\gamma_{c,d}}\delta_{a_1,b}[[f_{c,d},f_{a_2,b_2}],f_{a,b_1}][-1]\nonumber\\
&\quad-\sum_{\substack{(A)_{i,i},(C)_{i,j}^{t-1},(D)_{i,i}^1}}{(-1)}^{\beta_1+\gamma_{c,d}} \delta_{a_1,-a}[[f_{c,d},f_{a_2,b_2}],f_{-b_1,b}][-1].\label{ti4}
\end{align}
Let us compute each terms of \eqref{ti4}. By a direct computation, we obtain
\begin{align}
\text{the first term of \eqref{ti4}}&=0,\label{2-1}\\
\text{the second term of \eqref{ti4}}&=\sum_{\substack{(A)_{i,i},(C)_{i,j}^{t-1},(D)_{i,i}^1}}{(-1)}^{\beta_1+\gamma_{c,d}}\delta_{b_2,-b}\delta_{b_1,c}\delta_{d,a}f_{a_1,-a_2}[-1],\label{2-2}\\
\text{the third term of \eqref{ti4}}&=\sum_{\substack{(A)_{i,i},(C)_{i,j}^{t-1},(D)_{i,i}^1}}{(-1)}^{\beta_1+\gamma_{c,d}}\delta_{b_1,a} \delta_{b_2,c}\delta_{d,a_1}f_{a_2,b}[-1]\nonumber\\
&\quad-\sum_{\substack{(A)_{i,i},(C)_{i,j}^{t-1},(D)_{i,i}^1}}{(-1)}^{\beta_1+\gamma_{c,d}} \delta_{d,-b_2}\delta_{-c,a_1}f_{a_2,b}[-1],\label{2-3}\\
\text{the 4-th term of \eqref{ti4}}&=0,\label{2-4}\\
\text{the 5-th term of \eqref{ti4}}
&=\sum_{\substack{(A)_{i,i},(C)_{i,j}^{t-1},(D)_{i,i}^1}}{(-1)}^{\beta_1+\gamma_{c,d}} \delta_{a_1,-a}\delta_{d,a_2}\delta_{b_2,-b_1}f_{c,b}[-1]\nonumber\\
&\quad-\sum_{\substack{(A)_{i,i},(C)_{i,j}^{t-1},(D)_{i,i}^2}}{(-1)}^{\beta_1+\gamma}\delta_{d,-b_2}\delta_{c,b_1}f_{a_2,b}[-1].\label{2-5}
\end{align}
Since
\begin{gather*}
\eqref{2-2}=-\text{the first term of \eqref{2-5}},\quad\eqref{2-3}_2=-\text{the second term of \eqref{2-5}},\\
\eqref{2-3}_1=\sum_{\substack{\row(a)=i,\row(b)=j,\\\col(a)=\col(b)+s}}{(-1)}^{\gamma_{a,b}}f_{a,b}[-1]
\end{gather*}
hold, we obtain
\begin{equation}\label{equ2}
\text{the second term of \eqref{ti}}=\sum_{\substack{\row(a)=i,\row(b)=j,\\\col(a)=\col(b)+s}}{(-1)}^{\gamma_{a,b}}f_{a,b}[-1].
\end{equation}
by adding \eqref{2-1}-\eqref{2-5}.
Adding \eqref{equ3}, \eqref{equ1} and \eqref{equ2}, we obtain
\begin{align}
&\quad\text{the third term of \eqref{aku}}\nonumber\\
&=-s\sum_{\substack{\row(a)=i,\row(b)=i,\\\col(a)=\col(b)+2s}}{(-1)}^{\gamma_{a,b}}f_{a,b}[-1]+s\sum_{\substack{\row(a)=j,\row(b)=j,\\\col(a)=\col(b)+2s}}{(-1)}^{\gamma_{a,b}}f_{a,b}[-1]\nonumber\\
&\quad-\alpha\sum_{\substack{\row(a)=i,\row(b)=i,\\\col(a)=\col(b)+2s}}{(-1)}^{\gamma_{a,b}}(s-1)f_{a,b}[-1]\label{aku3}
\end{align}
by \eqref{y1}. Adding \eqref{aku1}, \eqref{aku2} and \eqref{aku3}, we obtain the proof.
\end{proof}
\section{Data availability statement}
The datasets generated during the current study are available from the corresponding author on reasonable request.
\bibliographystyle{plain}
\bibliography{syuu}
\end{document}